\definecolor{dmagenta}{rgb}{.5,0,.5} 
\definecolor{dred}{rgb}{.5,0,0} 
\definecolor{dgreen}{rgb}{0,.5,0} 
\definecolor{blue}{rgb}{0,0,0.5} 
\definecolor{black}{rgb}{0,0,0} 
\definecolor{vdgreen}{rgb}{0,.3,0} 
\definecolor{vdred}{rgb}{.3,0,0} 
\definecolor{red}{rgb}{1,0,0} 
\newcommand{\lplus}{{\mathfrak{h}}}        
\newcommand\cG{{\mathcal{G}}} 
\newcommand\cO{{\mathcal{O}}} 
\newcommand\cB{\mathcal{B}}    
\newcommand\CLO{{\mathcal{LO}}}    
\newcommand\AO{{A{\mathcal{O}}}}            
\newcommand\AOV{{A{\mathcal{O}_V}}}
\newcommand\IO{{I{\mathcal{O}}}}
\newcommand\IOV{{I{\mathcal{O}_V}}}
\newcommand{\F}{{\mathds{k}}}
\newcommand{\Z}{{\mathbb{Z}}}
\newcommand{\hairy}{\mathcal H}  
\newcommand{\phairy}{\mathcal{PH}} 
\newcommand{\G}{{\bf G}}              
\newcommand{\ext}{\bigwedge\nolimits}
\newcommand{\bdry}{\partial}
\newcommand{\id}{\mathrm{Id}}   
\newcommand{\comm}{{\sf  Comm}} 
\newcommand{\assoc}{{\sf  Assoc}} 
\newcommand{\lie}{{\sf  Lie}}            
\DeclareMathOperator{\Tr}{Tr}  
\DeclareMathOperator{\br}{\beta}  
\DeclareMathOperator{\SP}{Sp}  
\DeclareMathOperator{\Out}{Out}  
\DeclareMathOperator{\Sym}{{\frak S}}  
\DeclareMathOperator{\GL}{GL}  
\DeclareMathOperator{\Aut}{Aut}  
\DeclareMathOperator{\HD}{HD}
\DeclareMathOperator{\HC}{HC}
\let\ker\undefined
\DeclareMathOperator{\ker}{ker}  
\DeclareMathOperator{\spf}{\mathfrak{sp}}
\newcommand{\SF}[1]{{\mathbb S}_{#1}}  
\newcommand{\SpF}[1]{{\mathbb S}_{\langle#1\rangle}}  
\newcommand{\CB}{\mathsf{B}} 
\newcommand{\degree}[1]{[\![#1]\!]}
\newcommand{\Ok}[1]{\cO(\mkern-4.5mu(#1)\mkern-4.5mu)}
\newcommand{\pp}[1]{(\mkern-4.5mu(#1)\mkern-4.5mu)} 
\newcommand{\bfs}{{\mathbf x}} 
\newcommand{\iso}{\cong}  
\newcommand{\BLie}{\CB_\bullet{\lie_V}}
\newcommand{\BO}{\CB_\bullet\cO_V}
\newcommand{\BComm}{\CB_\bullet\comm_V}
\newcommand{\BAssoc}{\CB_\bullet\assoc_V}
\newtheorem{proposition}{Proposition}[section]
\newtheorem{definition}[proposition]{Definition}
\newtheorem{theorem}[proposition]{Theorem}
\newtheorem{lemma}[proposition]{Lemma}
\newtheorem{corollary}[proposition]{Corollary}
\theoremstyle{remark}
\newtheorem*{example}{Example}
\newtheoremstyle{red}{3pt}{3pt}{\color{red}}{}{\itshape}{.}{.5em}{}
\theoremstyle{red}
\newcommand{\directedwheel}[4]
{\begin{minipage}{3.5cm}
\begin{tikzpicture}[scale=.5]
\begin{scope}[decoration={markings,mark = at position 0.5 with {\arrow{stealth}}}] 
\node (v0) [above right] at (45:1.7) {$#1$};
\node (v1) [above left] at (135:1.7) {$#2$};
\node (v2) [below left] at (-135:1.7) {$#3$};
\node (v3) [below right] at (-45:1.7) {$#4$};
\draw (45:1)--(v0);
\draw (135:1)--(v1);
\draw (-135:1)--(v2);
\draw (-45:1)--(v3);
\midarrow (45:1) to [out=135, in=45] (135:1);
\midarrow (135:1) to [out=-135, in=135] (-135:1);
\midarrow (-135:1) to [out=-45, in=-135] (-45:1);
\midarrow (-45:1) to [out=45, in=-45] (45:1);
\end{scope}
\end{tikzpicture}
\end{minipage}
}
\newcommand{\midarrow}{\draw[postaction={decorate}]}
\title{Higher hairy graph homology}
\author{Jim Conant}
\author{Martin Kassabov}
\author{Karen Vogtmann}
\begin{document}
\begin{abstract}  
We study the hairy graph homology of a cyclic operad; in particular we show how to assemble corresponding hairy graph cohomology classes to form cocycles for ordinary graph homology, as defined by Kontsevich.  
We identify the part of hairy graph homology coming from graphs with cyclic fundamental group as the dihedral homology of a related associative algebra with involution.  For the operads $\comm, \assoc$ and $\lie$  we compute this algebra explicitly, enabling us to apply known results on dihedral homology to the computation of hairy graph homology.  In addition we determine the image in hairy graph homology of the trace map defined in \cite{CKV}, as a symplectic representation.  

For the operad $\lie$ assembling hairy graph cohomology classes yields all known non-trivial rational homology of $\Out(F_n)$. 
The hairy graph homology of $\lie$ is also useful for constructing elements of the cokernel of the Johnson homomomorphism of a once-punctured surface.


\end{abstract}
\maketitle

\label{sec:Introduction}


\section{Introduction}

In \cite{Ko2,Ko1} Kontsevich related the homology of certain infinite-dimensional graded Lie algebras to groups which are important in low-dimensional topology.  To construct one of these Lie algebras one needs a symplectic vector space $V$ and a cyclic operad $\cO$ with unit; the resulting Lie algebra $\CLO_V$ contains a copy of $\spf(V)$ in degree zero, which acts on the positive degree ideal $\lplus_V$ via the adjoint action.  Kontsevich considered  the limit $\lplus=\lim_{\dim(V)\to\infty} \lplus_V$ and showed, for example, that for  the associative operad $\assoc$ the $\spf_\infty$-invariants of $H_*(\lplus)$ can be identified with the cohomology of mapping class groups of punctured surfaces, and for the Lie operad $\lie$ with the cohomology of outer automorphism groups of free groups.  


Kontsevich's theorem is proved by identifying the subcomplex of symplectic invariants in the Chevalley-Eilenberg complex $\ext^*\lplus$   with a complex of {\em $\cO$-graphs}; these are finite graphs whose vertices are decorated by elements of $\cO$.    
In \cite{CKV} we introduced a modified version of $\cO$-graphs, called {\em hairy $\cO$-graphs} and a {\em trace map}   from $\ext^*\lplus$ to the hairy $\cO$-graph complex.   We proved that this trace map is injective on homology  and then used hairy graph homology methods in dimension one to obtain new information about the abelianization of $\lplus$.  

In \cite{Morita} Morita observed that elements  of the abelianization $H_1(\lplus)$ can be used to produce cocycles for $\lplus$ and thus, via Kontsevich's theorem, homology classes for $\Out(F_n)$ and mapping class groups.
In the current paper we  define an assembly map which produces cocycles for $\lplus$ from hairy graph homology classes of all dimensions, and show how this generalizes Morita's original construction. This provides new motivation for understanding hairy graph homology and its relation to the homology of $\lplus$.   Further motivation is provided by the fact that the Johnson cokernel of a once-punctured surface is a quotient  of $\lplus$ for $\cO=\lie$, and this cokernel maps onto a summand of $H_1(\lplus)$ that can be understood in terms of hairy graphs. The connection with the Johnson cokernel 
 is further explored in \cite{C, CK}, where new non-trivial elements of this cokernel are constructed.

We next turn to the problem of computing hairy graph homology $H_*(\hairy)$.  In \cite{CKV} we studied  $H_1(\hairy),$ determining it completely for $\cO=\comm$ and $\assoc$ and finding new classes  for $\cO=\lie$ which are related to modular forms.   In this paper we consider homology in all dimensions for the subcomplex of $\hairy$ spanned by connected graphs with cyclic fundamental group.  For the operads $\cO=\comm, \assoc$ and $\lie$ we prove that this homology can be identified with the dihedral homology (as defined by Loday) of a certain associative ring with involution $I\cO$.  For the operads $\comm, \assoc$ and $\lie$ we calculate $I\cO$ explicitly, then apply known results on dihedral homology to the computation of hairy graph homology.  For $\cO=\lie$ this gives us a complete answer, and for $\cO=\assoc$ and $\comm$ we obtain  low-dimensional calculations.

The paper is organized as follows.
 After  reviewing the basic constructions of hairy graph homology and the trace map in Section~\ref{sec:basics},  we  determine the image of the trace as a symplectic module  in Section~\ref{sec:image}. In particular we show that the irreducible representations in its decomposition appear with the same multiplicity as the corresponding general linear representations, a fact which is useful for understanding the new elements of the cokernel of the Johnson homomorphism mentioned above.   In Section~\ref{sec:assembly} we define the assembly map, which also has a nice interpretation in the language of modular operads that we sketch briefly.  In Sections~\ref{sec:BO} and \ref{sec:IO} we define the associative algebra $I\cO$ and do the calculations of dihedral homology for $\cO=\comm,\assoc$ and $\lie$.  
Finally, in Section~\ref{sec:implications} we briefly summarize the implications of this paper in the case  $\cO=\assoc$ for the homology of mapping class groups and $\cO=\lie$ for the homology of $\Out(F_n)$.

\section{Review of Kontsevich's theorem and the trace map}\label{sec:basics}
Throughout this paper we fix a field $\F$ of characteristic $0$.  All homology and cohomology is taken with trivial coefficients in $\F$ unless explicitly stated otherwise.

\subsection{Lie algebras}  
Given a cyclic  operad
 $\cO$ of finite-dimensional $\F$-vector spaces and a    symplectic vector space $(V, \langle\,,\rangle)$ we construct a Lie algebra $\CLO_V$ as follows.

The cyclic operad $\cO$ decomposes as $\cO=\oplus_s \Ok{s}$, where $\Ok{s}$ is a module over the symmetric group $\Sym_s$ spanned by elements with $s-1$ inputs and one output.  As a vector space, the Lie algebra $\CLO_V$ is defined as
$$\CLO_V=\bigoplus_{s\geq 2} [\Ok{s}\otimes V^{\otimes s}]^{\Sym_s},$$ 
where $\Sym_s$ acts on $V^{\otimes s}$ by permuting the factors.  Thus we can think of generators of $\CLO_V$ as operad generators whose i/o slots are no longer numbered, but are labeled by elements of $V$ instead.  These generators
 are called {\em spiders}, the underlying operad generator is called the {\em body} of the spider and the i/o slots are called its {\em legs}.  
 Two such generators are {\em mated} by choosing a leg from each, performing operad composition using the chosen leg on the first spider as the output slot and the second as the input slot, and multiplying the result by the symplectic product of the labels.   
   The Lie bracket on $\CLO_V$ is given by summing over all possible matings.  In other words, the Lie algebra structure is induced by the operations 
$$\Ok{s}\otimes\Ok{t}\otimes V^{\otimes 2}\to \Ok{s +t-2}\otimes \F,$$ where the map $\Ok{s}\otimes\Ok{t}\to \Ok{s+t-2}$ is  one of the compositions in the operad and $V^{\otimes 2}\to \F$ is the symplectic pairing of the output and input labels.  

We define the {\em degree} of an $s$-legged spider    
to be $s-2$. The positive degree elements of $\CLO_V$  generate a Lie subalgebra $\lplus_V=\bigoplus_{s\geq 3} [\Ok{s}\otimes V^{\otimes s}]^{\Sym_s}$. The  $2$-legged spiders generate a subalgebra which acts on $\lplus_V$ by the adjoint action. In the case that the operad has a unit element, this subalgebra contains a copy of $\mathfrak{sp}_V$  so we have an action of $\mathfrak{sp}_V$ on $\CLO_V$, and this restricts to an action on $\lplus_V$. This action coincides with the action of $\mathfrak{sp}_V$ induced by its natural action on $V^{\otimes s}$. 

\subsection{Graph complexes} In this sub-section we recall the definitions of  hairy $\cO$-graphs, the hairy graph complex $\hairy$, and hairy $\cO$-graph homology $H_*(\hairy)$.  To avoid technical difficulties we assume   $\Ok{2}\iso \F,$ spanned by the operad identity.  

\subsubsection{Graph terminology}  An \emph{admissible} graph is a finite $1$-dimensional CW complex, not necessarily connected, with no bivalent vertices. The univalent vertices are called {\em leaves} and the other vertices are called {\em internal vertices}. The edge adjacent to a leaf is called a {\em hair} and edges which are not hairs are {\em internal}.  An {\em orientation} of a graph is given by ordering   the internal vertices and directing the internal edges. Reversing an edge direction or applying a transposition to the vertex order reverses the graph orientation, so that every graph has two possible orientations. Finally, the \emph{degree} of a graph is equal to $\sum_{v}(|v|-2)$, where $v$ ranges over all internal vertices and $|v|$ is the valence of $v$.  

\subsubsection{Hairy graph complex}
A  vertex $v$ of a  graph is said to be {\em colored} by an operad element $o$ if the i/o slots of $o$ are identified with the half-edges adjacent to $v$.  An admissible graph  is said to be {\em colored} if it is oriented, all internal vertices are colored by operad elements and the leaves are ordered.  An \emph{orientation} of an admissible graph is defined by ordering the internal vertices and directing the internal edges, where swapping two vertices or reversing the direction of an edge give the opposite orientation.

Let $\mathcal{AC}_{k,s}$ denote the vector space spanned by all isomorphism classes of  oriented admissible colored graphs with $k$ internal vertices and $s$ leaves, modulo linear relations in $\cO$ and relations $(X,-or)=-(X,or)$.  The symmetric group $\Sym_s$ acts on $\mathcal{AC}_{k,s}$ by permuting the ordering of the leaves.  These are the $k$-chains of  a chain complex $\mathcal{AC}_s$ with boundary operator $\partial\colon\mathcal{AC}_{k,s}\to \mathcal{AC}_{k-1,s}$ defined by summing over contracting internal edges of the graph and using the element of $\cO$ at the beginning of the edge as input into the element at the end, with signs determined by orientation. In the case of the  Lie operad this chain complex  computes the cohomology of groups $\Gamma_{n,s}$ which generalize $\Out(F_n)$ and $\Aut(F_n)$ \cite[section 11]{CKV}.

Now define
$$C_k\hairy_{V}:=\bigoplus_{s\geq 0} [ \mathcal{AC}_{k,s}\otimes V^{\otimes s}]^{\Sym_s},$$
where $\Sym_s$ acts on $V^{\otimes s}$ by permuting the factors.  More informally, we have  attached vectors in  $V$ to the leaves, which are no longer ordered. The boundary operator on $\mathcal{AC}_s$ induces a boundary operator  $\partial\colon C_k\hairy_V\to C_{k-1}\hairy_{V}$.
Inside $\hairy_V$ there is a subcomplex   spanned by hairy graphs with no hairs.  Note that this is independent of $V$ and is the ordinary $\cO$-graph complex defined by Kontsevich.  

\subsection{Kontsevich's theorem}Let $V_n=\F^{2n}$ with the standard symplectic form.  The natural inclusions $V_n\to V_{n+1}$ induce maps of the Lie algebras $\CLO_{V_n}\to \CLO_{V_{n+1}}$, and we let   $\displaystyle\CLO=\lim_{n\to\infty}\CLO_{V_n}$.  The  (continuous) Lie algebra cohomology of $\CLO$  contains $H^*(\frak{sp}_\infty)$.  We can eliminate that summand by taking the $Sp$-invariant part of the  (continuous) Lie algebra cohomology of $\displaystyle\lplus=\lim_{n\to\infty}\lplus_{V_n}$.  Kontsevich's theorem can be then interpreted as saying (in part):  

\begin{theorem}\label{Kontsevich}   
The continuous Lie algebra cohomology of $\lplus$ has the structure of a Hopf algebra, whose primitive part we denote by $PH_c^*(\lplus)$.
\begin{enumerate} 
\item For $\cO$=\assoc, $\displaystyle PH_c^*(\lplus)^{Sp}\cong \bigoplus_{g\geq 0,s\geq 1} H_*(\operatorname{Mod}(g,s))$.
\item For $\cO$=\lie, $\displaystyle PH_c^*(\lplus)^{Sp}\cong \bigoplus_{n\geq 3} H_*(\Out(F_n))$.
\end{enumerate}
\end{theorem}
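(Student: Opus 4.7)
The plan is to follow Kontsevich's original strategy, which proceeds in three conceptual steps: compute $H^*_c(\lplus)$ via a Chevalley-Eilenberg complex, use stable symplectic invariant theory to identify its $Sp$-invariant part with a complex of $\cO$-graphs, and finally recognize that graph complex as cellular chains on a space whose rational homotopy type is well-understood.

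First I would realize $H^*_c(\lplus)$ as the cohomology of a Chevalley-Eilenberg cochain complex $\bigwedge^*\lplus^*$. Since $\F$ has characteristic zero and $\fsp_\infty$ acts semisimply, taking $Sp$-invariants is exact, so it suffices to compute the cohomology of $(\bigwedge^*\lplus^*)^{Sp}$. Applying the stable first fundamental theorem of invariant theory for the symplectic group, the $Sp$-invariants of $V^{\otimes 2k}$ in the stable limit are spanned by products of symplectic pairings indexed by perfect matchings on the $2k$ tensor factors. Combined with the spider description $\lplus_V=\bigoplus_{s\geq 3}[\Ok{s}\otimes V^{\otimes s}]^{\Sym_s}$, an $Sp$-invariant wedge of spiders becomes an $\cO$-graph, where the spiders are colored vertices and the symplectic pairings are internal edges. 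Unpacking the Lie bracket as operadic composition at mated legs shows that the Chevalley-Eilenberg differential translates into edge contraction, so $(\bigwedge^*\lplus^*)^{Sp}$ is identified, as a complex, with the ordinary $\cO$-graph complex (the ``no hairs'' subcomplex of $\hairy$ recalled earlier).

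Next I would identify this graph complex topologically, case by case. For $\cO=\assoc$, cyclic operadic structure at each vertex makes the graphs into ribbon graphs; by the Penner-Harer-Mumford-Strebel ribbon-graph cell decomposition, the subcomplex spanned by connected ribbon graphs of given genus $g$ and number of boundary components $s$ is a rational cellular model for the moduli space $\mathcal{M}(g,s)$, whose homology is $H_*(\operatorname{Mod}(g,s))$. For $\cO=\lie$, an $\cO$-graph is an oriented graph with no extra vertex decoration, and the subcomplex of connected graphs of first Betti number $n$ coincides with the cellular complex of the quotient of the spine of Culler-Vogtmann Outer space by $\Out(F_n)$; contractibility of Outer space together with finite cell stabilizers yields $H_*(\Out(F_n))$. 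The Hopf algebra structure on $H^*_c(\lplus)$ comes from the standard shuffle/deconcatenation Hopf algebra structure on $\bigwedge^*\lplus$, which under the graph identification dualizes to disjoint union of graphs; hence primitive classes correspond exactly to connected graphs, and summing over $(g,s)$ or over $n$ gives the stated direct sum decompositions.

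The main obstacle is the topological identification in the third step: one must invoke either the Penner-Harer-Mumford-Strebel cell decomposition of decorated moduli space or Culler-Vogtmann's Outer space, and then carefully match orientations on graphs with cellular orientations, compare stabilizer actions, and verify that the edge-contraction differential agrees with the topological boundary map. A subsidiary technical point is justifying the exchange of the continuous Lie algebra cohomology with the stable limit $n\to\infty$, and checking that the stable first fundamental theorem is strong enough to capture all $Sp$-invariants appearing in the limiting Chevalley-Eilenberg complex, with no spurious ``classical'' contributions left over outside the graph complex piece.
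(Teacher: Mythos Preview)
Your proposal is correct and follows essentially the same three-step outline the paper gives: reduce to the $Sp$-invariant subcomplex of the Chevalley--Eilenberg complex, use symplectic invariant theory to identify that subcomplex with the $\cO$-graph complex, and then recognize the $\assoc$- and $\lie$-graph complexes as the ribbon graph complex and the cellular chain complex of the moduli space of graphs, respectively (the paper refers to \cite{CV} for details). Your additional remarks on the Hopf algebra structure and primitives-versus-connected-graphs, and your identification of the main technical checkpoints, are accurate elaborations of exactly this sketch.
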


To prove this theorem,  Kontsevich first points out that the subcomplex of $Sp$-invariants in $\ext \CLO$ is quasi-isomorphic to the whole complex. 

He then uses invariant theory of the symplectic group to identify $Sp$-invariants in $\ext \CLO$ with $\cO$-graphs.  

Finally, he identifies the $\cO$-graph complex for $\cO=\assoc$ with the ``ribbon graph complex" computing $H^*(\operatorname{Mod}_{g,s})$ and the $\cO$-graph complex for $\cO=\lie$ with the cell complex of the moduli space of graphs, which computes $H^*(\Out(F_n))$.  See \cite{CV} for details.

\subsection{The trace map}  There is a natural inclusion of vector spaces $\iota\colon\ext\lplus\to\hairy$ that sends $\bfs= x_1\wedge\ldots\wedge x_k$ to the ordered disjoint union   $X= x_1\sqcup\ldots\sqcup x_k$, which is a hairy $\cO$-graph with $k$ (ordered) internal vertices and no internal edges.  There is a map $T\colon\hairy\to\hairy$ which sums over all ways of joining two leaves by an oriented edge and multiplying by the symplectic product of their labels.  The trace map  
$\Tr\colon \ext\lplus\to\hairy$ is then defined by $\Tr=\exp(T)\circ\iota,$
i.e. $$\Tr(\mathbf x)=X +T(X) + \frac{1}{2!}T^2(X)+ \frac{1}{3!}T^3(X)\ldots.$$
The sum is finite because $\mathbf x$ has only finitely many legs to pair.

In \cite{CKV} it is shown that $\Tr$ is a chain map and that it is injective on homology. A different proof that it is injective on homology will be presented below.

\subsection{Morita's construction of classes in  $(H^*(\lplus))^{Sp}$} Morita's idea was to use the map on Lie algebra cohomology induced by the abelianization map  $$\lplus\to \lplus^{\mathrm{ab}}$$ to pull back classes from $\ext^k\lplus^{\mathrm{ab}}=H^k(\lplus^{\mathrm{ab}})$, thereby obtaining potential classes in $H^k(\lplus)$ which can be projected to classes in $H^k(\lplus)^{Sp}$.  In order for this to work, one needs  to find elements of $\lplus^{\mathrm{ab}}$ to pull back, and this is where the trace map comes in.  The trace induces an injection $\Tr_*\colon H_*(\lplus)\to H_*(\hairy)$, so in particular, the abelianization $\lplus^{\mathrm{ab}}=H_1(\lplus)$ injects into $H_1(\hairy)$.  In \cite{CKV} we were able  to find interesting 1-dimensional hairy graph homology classes in the image of $\Tr_*$.  In the next section we determine  the entire image as an $\SP$-module.   

\section{The image of the trace map}\label{sec:image}

\subsection{Restricting to finite-dimensional pieces}
Recall that $\displaystyle\lplus=\lim_{n\to\infty}\lplus_{V_n}$ decomposes as a direct sum of chain complexes  $$\lplus =\bigoplus_{d\geq 3} \lplus \degree{d}$$ according to degree, and $\lplus_{V_n} \degree{d}$ is finite-dimensional for each $n$.  Similarly, $\displaystyle\hairy=\lim_{n\to\infty}\hairy_{V_n}$   decomposes as a direct sum according to degree.  Since the trace map $\Tr\colon \ext\lplus \to \hairy$ preserves both $n$ and degree,  we may restrict attention to a fixed $n$ and $d$,  find the image of the finite-dimensional piece $\ext^k\lplus_{V_n}\degree{d}$ in $C_k\hairy_{V_n}\degree{d}$, then analyze what happens as $n\to\infty$.   

Thus in this section we will fix $V$ of dimension  $2n$  together with a symplectic basis $$\cB=\{p_1,\ldots,p_n,q_1,\ldots,q_n\},$$ and let $x\mapsto x^*$ be the involution of $\cB$ defined by $p_i^*=q_i$ and $q_i^*=-p_i$.

\subsection{Decompositions of the degree $d$ parts of $\ext\lplus_V$ and $\hairy_V$}
The chain  complexes $\ext\lplus_V$ and  $\hairy_V$ are both modules over the symplectic group $\SP(V)$ and the trace map $\Tr$ is a module homomorphism.  The key to understanding the image of $\Tr$ is  to decompose the degree $d$ parts of both $\ext\lplus_V$ and  $\hairy_V$ as  direct sums of simpler $\SP(V)$-submodules.  These decompositions are based on the classical decomposition  of $V^{\otimes \ell}$ as $$V^{\otimes\ell}=V^{\langle\ell\rangle}\oplus V^{\langle \ell\rangle}_{\ell-2}\oplus\cdots\oplus V^{\langle \ell\rangle}_{\ell- 2p}$$
with $p=[\ell/2],$  and we begin by recalling this decomposition, as described in \cite[Section 17]{FH}.

\subsubsection{Classical decomposition of $V^{\otimes \ell}$}\label{classical} There are natural contraction operators $\Phi_{ij}\colon V^{\otimes \ell}\to V^{\otimes(\ell-2)}$, $1\leq i<j\leq \ell$ defined by
$$\Phi_{ij}(v_1\otimes\cdots\otimes v_\ell)=\langle v_i,v_j\rangle v_1\otimes\cdots\otimes\widehat{v_i}\otimes\cdots\otimes\widehat{v_j}\otimes\cdots\otimes v_\ell.$$

We define $V^{\langle\ell\rangle}\subset V^{\otimes \ell}$ by $$V^{\langle\ell\rangle}= \bigcap_{1\leq i<j\leq \ell} \ker(\Phi_{ij}).$$
For example, $V^{\langle 2\rangle}$ has a basis consisting of the tensors $x\otimes y$ for $x,y\in \cB$,  $y\neq x^*$ and $(p_1-x)\otimes (q_1+x^*)$ for $x\neq p_1$.  One can show that each $V^{\langle d\rangle}$ has a basis consisting of pure tensors $v_1\otimes\ldots\otimes v_d$.

There are also insertion operators $\Psi_{ij}\colon V^{\otimes (\ell-2)}\to V^{\otimes \ell}$, $1\leq i<j\leq \ell$ which ``insert the symplectic element" $\omega\in V^{\otimes 2}$ into the $i$th and $j$th spots in the tensor product.  Here $\omega$ is defined  by  $$\omega=\frac{1}{2n}\sum_{i=1}^n \, p_i\otimes q_i-q_i\otimes p_i =\frac{1}{2n}\sum_{x\in\cB}  \, x\otimes x^*.$$

Thus, for example  $$\Psi_{13}(v_1\otimes\cdots\otimes v_{\ell-2})
=\frac{1}{2n}\sum_{x\in\cB} x\otimes v_1\otimes x^*\otimes v_2\otimes\cdots\otimes v_{\ell-2}$$
and $\Psi_{12}$ can be written more concisely as $$\Psi_{12}(v_1\otimes\cdots\otimes v_{\ell-2})=\omega\otimes v_1\otimes\cdots\otimes v_{\ell-2}.$$

 We can graphically code the image under $\Psi_{ij}$ of a pure tensor $v_1\otimes\ldots\otimes v_{\ell-2}$  by plotting a row of $\ell$ dots, drawing an oriented red edge from $i$ to $j$ and labeling the remaining dots in order by the vectors $v_i$.
Thus the $2n$ terms of $\Psi_{13}(v_1\otimes\cdots\otimes v_{\ell-2})$ are represented by the single figure

\begin{center}
\begin{tikzpicture}
\begin{scope}[decoration={markings,mark = at position 0.5 with {\arrow{stealth}}}]
 \draw [fill] (0,0) circle (.05);
 \draw [postaction={decorate}, red, thick] (0,0) to [out=90, in=90]  (2,0);
\draw [fill] (1,0) circle (.05);
\draw [fill] (2,0) circle (.05);
\node [below] at (1,0) {$v_1$};
\draw [fill] (3,0) circle (.05);
\node [below] at (3,0) {$v_2$};
\node (dots) at (4,0) {$\ldots$};
\draw [fill] (5,0) circle (.05);
\node [below] at (5,0) {$v_n$};
\end{scope} 
 \end{tikzpicture}
\end{center}

The subspace $V^{\langle \ell\rangle} _{\ell-2r}$ is now defined by
$$V^{\langle \ell\rangle} _{\ell-2r}=\sum_I\Psi_{I_1}\circ\cdots\circ \Psi_{I_r} (V^{\langle \ell-2r\rangle}),$$ where the sum is over all sets of pairwise disjoint pairs of indices $I=\{I_1,\ldots, I_r\}$.    Graphically, the image of a generator $v_1\otimes\ldots\otimes v_{\ell-2r}$ of $V^{\langle \ell-2r\rangle}$ is  represented by a row of $\ell$ dots with $r$ oriented red edges corresponding to the pairs $I_k$, and the rest of the dots labeled by the $v_i$.   For example, $\Psi_{14}\Psi_{36}((p_1+q_2)\otimes (q_1+p_2)\otimes p_3)$ is represented as follows:

\begin{center}
 \begin{tikzpicture} 
 \begin{scope}[decoration={markings,mark = at position 0.5 with {\arrow{stealth}}}]
 \draw [fill] (0,0) circle (.05);
\draw [fill] (1,0) circle (.05);
\draw [fill] (2,0) circle (.05);
\draw [fill] (3,0) circle (.05);
\draw [fill] (4,0) circle (.05);
\draw [fill] (5,0) circle (.05);
\draw [fill] (6,0) circle (.05);
\node [below] at (1,0) {$p_1+q_2$};
\node [below] at (4,0) {$q_1+p_2$};
\node [below] at (6,0) {$p_3$};
\draw [postaction={decorate}, red, thick] (0,0) to [out=90, in=90]  (3,0);
\draw [postaction={decorate},, red, thick] (2,0) to [out=90, in=90]  (5,0);
\end{scope}
 \end{tikzpicture}
\end{center}

\begin{lemma}\label{Weyl}[Lemma 17.15, \cite{FH}] $V^{\otimes\ell}=V^{\langle\ell\rangle}\oplus V^{\langle \ell\rangle}_{\ell-2}\oplus\cdots\oplus V^{\langle \ell\rangle}_{\ell- 2p}$
with $p=[\ell/2].$
\end{lemma}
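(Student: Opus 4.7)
The plan is to prove the decomposition in two stages: first that the listed subspaces span $V^{\otimes\ell}$, and then that the sum is direct. Both halves rest on computing the compositions $\Phi_{kl}\circ\Psi_{ij}$, which I would do up front by splitting into three cases. When $\{i,j\}=\{k,l\}$, the identity $\sum_{x\in\cB}\langle x,x^*\rangle=2n$ together with the normalization of $\omega$ gives $\Phi_{ij}\circ\Psi_{ij}=\id$. When $\{i,j\}$ and $\{k,l\}$ are disjoint, the composition is an insertion applied to a contraction (after appropriate re-indexing). When they overlap in one index, one gets a single insertion acting on a transposed lower tensor. These identities are straightforward from the definitions and are the only combinatorial input needed.

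For the spanning statement I argue by induction on $\ell$, the cases $\ell\leq 1$ being immediate since no contractions are available. For $\ell\geq 2$, given $T\in V^{\otimes\ell}$, either every $\Phi_{ij}(T)$ vanishes (so $T\in V^{\langle\ell\rangle}$) or some contraction is non-zero. In the latter case the induction hypothesis decomposes each $\Phi_{ij}(T)$ inside $V^{\otimes(\ell-2)}$. I then use the insertion operators $\Psi_{ij}$ to lift these pieces back to an element $T'\in\sum_{r\geq 1}V^{\langle\ell\rangle}_{\ell-2r}$ satisfying $\Phi_{kl}(T-T')=0$ for every $(k,l)$; choosing the coefficients in the lift reduces to a linear system whose diagonal entries are the identity by the key composition relation, so it is solvable. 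Then $T-T'\in V^{\langle\ell\rangle}$ closes the induction.

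For directness, suppose $w_0+w_1+\cdots+w_p=0$ with $w_r=\sum_{|I|=r}\Psi_I(u_{r,I})$ and $u_{r,I}\in V^{\langle\ell-2r\rangle}$. I would isolate the top piece by applying a $p$-fold composition of contractions $\Phi_I$ matching one insertion pattern $I$ appearing in $w_p$. On a summand $w_r$ with $r<p$ this forces at least one $\Phi_{kl}$ to land on a harmonic factor, giving zero. On a summand $\Psi_{I'}(u_{p,I'})$ with $I'\neq I$, the same happens after unravelling with the composition relations, since the mismatch of pair structures eventually produces a contraction applied to a harmonic tensor. What survives is a non-zero scalar multiple of $u_{p,I}$, forcing $u_{p,I}=0$; descending induction on the number of insertions then yields $u_{r,I}=0$ for every $r$ and $I$.

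The main obstacle is the bookkeeping in the directness step: controlling how a $p$-fold contraction $\Phi_I$ interacts with an insertion pattern $\Psi_{I'}$ whose pairs overlap partially with those of $I$, and verifying that only the pattern exactly matching $I$ contributes. This is a purely combinatorial question about pair-matchings and amounts to a calculation in the Brauer algebra, but it is cleanly handled by the three-case composition identity above. Rather than belaboring the combinatorics, I would appeal to the detailed case analysis in Fulton--Harris, Chapter 17, from which the statement is quoted.
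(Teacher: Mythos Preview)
The paper does not prove this lemma; it is stated with a citation to Fulton--Harris and no argument. So there is nothing to compare against beyond that citation, and your closing sentence---deferring to the analysis in \cite{FH}---is exactly what the paper does.

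That said, the sketch you offer before deferring has a genuine gap in the directness step. You claim that applying a $p$-fold contraction $\Phi_I$ to $\Psi_{I'}(u_{p,I'})$ with $I'\neq I$ and $u_{p,I'}\in V^{\langle\ell-2p\rangle}$ yields zero, because the mismatch ``eventually produces a contraction applied to a harmonic tensor.'' This fails whenever $I$ and $I'$ cover the same set of indices but pair it differently: the harmonic tensor then sits in the same slots for both patterns, and $\Phi_I$ never touches it. Concretely, take $\ell=6$, $I=\{(1,2),(3,4)\}$, $I'=\{(1,3),(2,4)\}$, and $u\in V^{\langle 2\rangle}$ placed in slots $5,6$; a direct computation gives $\Phi_I\Psi_{I'}(u)=\tfrac{1}{2n}\,u\neq 0$. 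Cross-terms therefore survive, and your argument does not isolate a single $u_{p,I}$. (There is a related difficulty you elide: the expression $w_r=\sum_{|I|=r}\Psi_I(u_{r,I})$ need not be unique, so even speaking of ``the'' coefficient $u_{p,I}$ requires care.) What actually makes directness work is proving $V^{\langle\ell\rangle}\cap\sum_{i<j}\Psi_{ij}(V^{\otimes(\ell-2)})=0$ once and then inducting on $\ell$, which sidesteps the pair-matching combinatorics entirely.
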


Note that the decomposition of $V^{\otimes\ell}$ in Lemma~\ref{Weyl}
is also invariant under the natural action of the symmetric group $\Sym_\ell$ on $V^{\otimes \ell}.$   
 
\begin{lemma}\label{onered} For each $i<j$, the projection $\pi\colon V^{\otimes\ell}\to V^{\langle \ell\rangle}$ is zero on  $\Psi_{ij}V^{\otimes (\ell-2)}$.  
\end{lemma}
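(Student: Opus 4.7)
The plan is to show that the entire image $\Psi_{ij}(V^{\otimes(\ell-2)})$ is contained in the sum $V^{\langle\ell\rangle}_{\ell-2}\oplus V^{\langle\ell\rangle}_{\ell-4}\oplus\cdots$ of the summands in Lemma~\ref{Weyl} other than $V^{\langle\ell\rangle}$; the conclusion is then immediate since this sum is complementary to $V^{\langle\ell\rangle}$ in $V^{\otimes\ell}$.

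To do this, I would first apply Lemma~\ref{Weyl} to $V^{\otimes(\ell-2)}$ and decompose an arbitrary $w\in V^{\otimes(\ell-2)}$ as $w=\sum_{k\ge 0} w_k$ with $w_k\in V^{\langle\ell-2\rangle}_{\ell-2-2k}$. By linearity of $\Psi_{ij}$, it suffices to prove that $\Psi_{ij}(w_k)\in V^{\langle\ell\rangle}_{\ell-2-2k}$ for every $k$; in particular none of these summands meets $V^{\langle\ell\rangle}$. By the defining description of $V^{\langle\ell-2\rangle}_{\ell-2-2k}$, the vector $w_k$ is a sum of elements of the form $\Psi_{J_1}\circ\cdots\circ\Psi_{J_k}(u)$ with $u\in V^{\langle\ell-2-2k\rangle}$ and $J_1,\ldots,J_k$ pairwise disjoint pairs of indices in $\{1,\ldots,\ell-2\}$.

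Composing with $\Psi_{ij}$ then produces an $\ell$-tensor in which $\omega$ is inserted into $k+1$ pairwise disjoint pairs of positions of $\{1,\ldots,\ell\}$, and the entries of $u$ fill the remaining $\ell-2-2k$ positions (in some permuted order determined by how the insertions interleave with one another). Using the $\Sym_{\ell-2-2k}$-invariance of $V^{\langle\ell-2-2k\rangle}$ noted after Lemma~\ref{Weyl}, one may then rewrite $\Psi_{ij}\Psi_{J_1}\cdots\Psi_{J_k}(u)$ in the standard form $\Psi_{I_1}\circ\cdots\circ\Psi_{I_{k+1}}(u')$ for some disjoint pairs $I_1,\ldots,I_{k+1}$ in $\{1,\ldots,\ell\}$ and some $u'\in V^{\langle\ell-2-2k\rangle}$. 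This element lies in $V^{\langle\ell\rangle}_{\ell-2-2k}$ by definition, so the projection onto $V^{\langle\ell\rangle}$ kills it.

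The only real obstacle is bookkeeping: one has to carefully track how the positions shift when the outer $\Psi_{ij}$ is composed with the previously applied insertions $\Psi_{J_a}$, and verify that the final configuration is genuinely described by $k+1$ disjoint index pairs in $\{1,\ldots,\ell\}$ together with a tensor in $V^{\langle\ell-2-2k\rangle}$ in the remaining slots. Once this is set up cleanly, no further computation is needed, and the lemma follows by linearity.
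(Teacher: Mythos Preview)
Your argument is correct and follows essentially the same approach as the paper: decompose $t\in V^{\otimes(\ell-2)}$ via Lemma~\ref{Weyl}, then observe that $\Psi_{ij}$ carries each summand $V^{\langle\ell-2\rangle}_{\ell-2-2k}$ into $V^{\langle\ell\rangle}_{\ell-2-2k}$, so nothing lands in $V^{\langle\ell\rangle}$. Your extra bookkeeping with the $\Sym_{\ell-2-2k}$-invariance is not actually needed---with the paper's graphical convention the entries of $u$ stay in their original order after a further insertion, so $\Psi_{ij}\Psi_{J_1}\cdots\Psi_{J_k}(u)=\Psi_{I_1}\cdots\Psi_{I_{k+1}}(u)$ directly---but it does no harm.
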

\begin{proof} 
Let $t'=\Psi_{ij}(t)$ for some $t\in V^{\otimes (\ell-2)}$. Write $t= t_0 + t_1+\ldots + t_{\lfloor \frac{\ell-2}{2}\rfloor}$ using the decomposition of $V ^{\otimes (\ell-2)}$.  Then $\Psi_{ij}(t_i)\in V^{\langle \ell\rangle}_{\ell-2i+1}$ for all $i$, in particular no term of $\Psi_{ij}(t)$
is in $V^{\langle \ell \rangle}$. 
\end{proof}

The standard inclusion  $V_n\hookrightarrow V_{n+1}$ does not respect the decompositions in general, but it does preserve the first term, i.e.  $(V_n)^{\langle  \ell\rangle}\hookrightarrow (V_{n+1})^{\langle \ell\rangle}$.

\begin{lemma}\label{Nbig}
There exists  a finite list of elements $t_i\in (V_\ell)^{\langle \ell\rangle}$ such that for all $n\geq \ell$, the $t_i$  generate $(V_n)^{{\langle \ell\rangle}}$ as an $\SP(V_n)$-module.
\end{lemma}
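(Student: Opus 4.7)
My plan is to invoke symplectic Schur--Weyl duality and show that every $\SP(V_n)$-irreducible summand of $(V_n)^{\langle \ell\rangle}$ has all of its highest weight vectors already in $(V_\ell)^{\langle \ell\rangle}$. Then any basis of the finite-dimensional space $(V_\ell)^{\langle \ell\rangle}$ will serve as the required list $\{t_i\}$.

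First I would fix the standard Borel of $\spf(V_n)$ determined by $\cB$, so that each $p_i$ carries weight $\epsilon_i$. Symplectic Schur--Weyl duality then yields
\[
(V_n)^{\langle \ell\rangle}\iso \bigoplus_{\lambda\vdash \ell}W_\lambda\otimes S_\lambda
\]
for $n\geq \ell$, where $W_\lambda$ denotes the irreducible $\SP(V_n)$-representation of highest weight $\lambda$ and $S_\lambda$ the Specht module for $\Sym_\ell$. The usual restriction that $\lambda$ have at most $n$ parts is automatic, since any $\lambda\vdash\ell$ has at most $\ell\leq n$ parts. A highest weight vector of weight $\lambda$ can be written classically (cf.\ \cite{FH}) as the image under a Young symmetrizer of a tensor built from $p_1,\ldots,p_k$, where $k$ is the number of parts of $\lambda$; this construction produces the full $\dim S_\lambda$-dimensional space of highest weight vectors of weight $\lambda$, all of which lie in $V_\ell^{\otimes \ell}\subset V_n^{\otimes \ell}$ since $k\leq \ell$.

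Next I would observe that, because the symplectic form on $V_n$ restricts to the symplectic form on $V_\ell$, the contractions $\Phi_{ij}$ restrict compatibly, giving
\[
V_\ell^{\langle \ell\rangle}=V_\ell^{\otimes \ell}\cap V_n^{\langle \ell\rangle}.
\]
Hence every highest weight vector of $V_n^{\langle \ell\rangle}$ already lies in $V_\ell^{\langle \ell\rangle}$. Since $V_n^{\langle \ell\rangle}$ is a semisimple $\SP(V_n)$-module, generated by its highest weight vectors, any basis $\{t_i\}$ of the finite-dimensional space $V_\ell^{\langle \ell\rangle}$ will generate $V_n^{\langle \ell\rangle}$ as an $\SP(V_n)$-module.

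The main obstacle is the length bound on partitions in the symplectic Schur--Weyl decomposition: no partition with more than $\ell$ parts appears, because a partition of $\ell$ has at most $\ell$ parts. Once that and the explicit form of the highest weight vectors are in hand, the rest is formal. Some care will be needed in handling multiplicities---I must produce a full Specht-module's worth of highest weight vectors inside $V_\ell^{\langle \ell\rangle}$ rather than just one per isotypic component---but this comes for free from the Young-symmetrizer construction, which acts on tensors already supported in $V_\ell^{\otimes \ell}$.
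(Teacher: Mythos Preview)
Your argument is correct and follows essentially the same route as the paper: decompose $(V_n)^{\langle\ell\rangle}$ into irreducible $\SP(V_n)$-modules indexed by partitions of $\ell$, observe that any such partition has at most $\ell$ parts so each irreducible is already nonzero inside $(V_\ell)^{\langle\ell\rangle}$, and conclude that finitely many elements of $(V_\ell)^{\langle\ell\rangle}$ generate. The only cosmetic differences are that the paper picks one nonzero element per irreducible summand (any nonzero element of an irreducible generates it) rather than a full basis of $(V_\ell)^{\langle\ell\rangle}$, and does not make the highest-weight-vector description explicit.
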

\begin{proof}
$(V_n)^{{\langle \ell\rangle}}$ decomposes as a direct sum of finitely many Schur functors $\SpF{\lambda_i}(V_n)$, where $\SpF{\lambda_i}(V_n)$ is the irreducible $\SP(V_n)$-module corresponding to the partition $\lambda_i$.  (An explicit construction of this module is mentioned in the proof of Lemma~\ref{cor:sp-rep}.) The $\SpF{\lambda_i}(V_\ell)$ are non-trivial since $\ell$ is equal to the size of $\lambda_i$. Therefore we can pick a non-zero $t_i$ inside each  $\SpF{\lambda_i}(V_\ell),$ which embeds in $\SpF{\lambda_i}(V_n)$ as long as  $n\geq \ell$.
\end{proof}

 \subsubsection{Labeled objects}\label{red}  Let $t\in V^{\otimes \ell}$, and write $t$ as a sum of pure tensors $t=\sum_{i=1}^N v_{i1}\otimes\ldots\otimes v_{i\ell}$.  A {\em spider labeled by $t$} is a sum of $N$ spiders with identical bodies but different leg-labels:  the $j$th leg of the $i$th spider is labeled with $v_{ij}$.  If $t=\Psi_{ij}(t^\prime)$ for some $t^\prime\in V^{\otimes (\ell-2)}$ we refer to a spider labeled by $t$ as a spider with a {\em  red edge}, and indicate this graphically by drawing a single spider with one red edge from the $i$th leg to the $j$th leg, labeling the rest of the legs by  $t'$.  If $I=\{I_1,\ldots,I_r\}$ is a set of   disjoint pairs, then a spider labeled by an element in the image of $\Psi_I:=\Psi_{I_1}\circ\ldots\circ\Psi_{I_r}$ is a {\em spider with $r$ red edges}, indicated by drawing a single spider with red arrows between the appropriate legs.   We will be particularly interested in  spiders labeled by basis elements $t$ of 
 of $V^{\langle \ell\rangle}_{\ell-2r}$, i.e. $t=\Psi_I(t_0)$ for some $I=\{I_1,\ldots,I_r\}$ and $t_0\in V^{\langle \ell-2r\rangle}$.  
 
A wedge of spiders with a total of $\ell$ legs can similarly be labeled by $t\in V^{\otimes \ell}$; the result is a sum of $N$ wedges of spiders,   identical except for the labels. A {\em wedge of spiders with $r$ red edges} is a wedge labeled by an element $\Psi_I(t^\prime)=\Psi_{I_1}\circ\ldots\circ\Psi_{I_r}(t^\prime)$ for $t^\prime\in V^{\otimes \ell-2r}$.  Finally, a {\em hairy graph labeled by $t$} is a sum of $N$ hairy graphs with identical underlying graphs but different labels, and a {\em hairy graph with $r$ red edges} is a hairy graph labeled by an element in the image of $\Psi_I$, $I=\{I_1,\ldots,I_r\}$.  

\subsubsection{Decomposition of the degree $d$ part of $\ext^k\lplus_V$}  In degree $d$, a wedge $x_1\wedge\ldots\wedge x_k$ of $k$ spiders has a total of $\ell=d+2k$ labelled legs.  We decompose  the degree $d$ subcomplex of $\ext^k\lplus_V$
 by taking the tensor product of the labels and using the above (symmetric group invariant) decomposition of $V^{\otimes \ell}$ to obtain a decomposition
   $$(\ext^k\lplus_V)\degree{d}=(\ext^k\lplus_V)_{d,0}\oplus (\ext^k\lplus_V)_{d,1}\oplus\cdots\oplus (\ext^k\lplus_V)_{{d,\lfloor \ell/2}\rfloor},$$ 
where $(\ext^k\lplus_V)_{d,r}$ is spanned by wedges of spiders labeled by generators of $V^{\langle \ell\rangle}_{\ell-2r}$, i.e. by wedges of  $k$ spiders of total degree $d$ with $r$ red edges, whose remaining labels are in $V^{\langle \ell-2r\rangle}$.

\subsubsection{Decomposition of  the degree $d$ part of $\hairy_V$}  We can also use the classical decomposition  of tensor powers of $V$  to obtain a decomposition of $C_k\hairy_V\degree{d}$  as 
$$C_k\hairy_V\degree{d}=(C_k\hairy_V)_{d,0}\oplus (C_k\hairy_V)_{d,1}\oplus\cdots\oplus (C_k\hairy_V)_{d, \lfloor \frac{d}{2}\rfloor+k},$$ 
where $(C_k\hairy_V)_{d,r}$ is spanned by hairy graphs labeled by  generators of $V^{\langle s\rangle}_{s-2r}$ for some $s\leq d+2k$, i.e. by hairy graphs with $r$ red edges and the remaining labels in $V^{\langle s-2r\rangle}$.  

More precisely, recall that the $k$-chains $C_k\hairy_V$ are spanned by hairy graphs with $k$ internal vertices:

$$C_k\hairy_V =\bigoplus_{s\geq 0} [ \mathcal{AC}_{k,s}\otimes V^{\otimes s}]^{\Sym_s},$$

In degree $d$, each generator of $C_k\hairy_V$ corresponds to a graph with  $0\leq e \leq \lfloor{\frac{d}{2}}\rfloor + k$ oriented edges and $s=d+2k-2e$ hairs, with leaves labeled by elements of $V$.
In this notation, 
 $$(C_k\hairy_V)_{d,r}= \bigoplus_{s=0}^{d+2k} [ \mathcal{AC}_{k,s}\otimes  V^{\langle s\rangle}_{s-2r}]^{\Sym_s}.$$
Note that   $(\hairy_V)_{d,r}:= \oplus_k(C_k\hairy_V)_{d,r}$ is a subcomplex of $\hairy_V$, since the boundary map on $\hairy_V$ has no effect  on the hair labels.

\subsection{Approximating the trace map}

Recall that the trace map $\Tr\colon\ext \lplus\to \hairy$ is defined on a wedge of spiders $\bfs = x_1\wedge\ldots\wedge x_k$ by erasing the wedge signs and summing over all ways of connecting some pairs of spider legs by oriented edges, removing the  labels on these legs and multiplying by their symplectic products (the orientations on the edges determine the signs).   $\Tr$ is a chain map, and if we follow it by the projection $\pi\colon\hairy\to(\hairy)_0=\oplus_d(\hairy)_{d,0}$ we obtain a chain map 
$$\pi\circ\Tr\colon \ext \lplus\to (\hairy)_0.$$

Now  fix a $2n$-dimensional symplectic space $V,$ so that the notion of hairy graph with a ``red edge" is defined (see Section~\ref{red}).  

\begin{lemma}\label{lem:kerpi}
$ \pi \colon \hairy_V\to (\hairy_V)_0$ vanishes on hairy graphs with at least one red edge. 
 \end{lemma}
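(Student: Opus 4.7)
The plan is to reduce the statement to Lemma~\ref{onered} applied to the hair labels, together with a consistency check that the various projections are compatible.

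First I would unpack the definition of $\pi$. By the decomposition
$(C_k\hairy_V)_{d,r} = \bigoplus_{s} [\mathcal{AC}_{k,s} \otimes V^{\langle s\rangle}_{s-2r}]^{\Sym_s}$
given just before the lemma, the summand $(\hairy_V)_0$ consists of hairy graphs whose $V^{\otimes s}$-valued leg labeling lies in $V^{\langle s\rangle}$. So $\pi$ is the map that acts as the identity on the underlying graph data and as the classical projection $V^{\otimes s} \to V^{\langle s\rangle}$ (coming from the direct-sum decomposition of Lemma~\ref{Weyl}) on the label tensor. Because that decomposition is $\Sym_s$-equivariant, as noted right after Lemma~\ref{Weyl}, this label-level projection descends to the $\Sym_s$-invariants, so $\pi$ is well defined at the hairy-graph level.

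Next I would translate ``red edge'' into the label language. By the conventions laid out in Section~\ref{red}, a hairy graph with at least one red edge is by definition one whose $V^{\otimes s}$-labeling tensor lies in the image of a composition $\Psi_I = \Psi_{I_1} \circ \cdots \circ \Psi_{I_r}$ with $r \geq 1$. In particular the label lies in the image of the single insertion operator $\Psi_{I_1}$ applied to $\Psi_{I_2} \circ \cdots \circ \Psi_{I_r}(t_0) \in V^{\otimes(s-2)}$. Lemma~\ref{onered} then immediately yields that the classical projection to $V^{\langle s \rangle}$ annihilates this label, and hence $\pi$ sends our hairy graph to zero.

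I expect no substantive obstacle: the content of the lemma is really a graph-theoretic repackaging of Lemma~\ref{onered}. The only small point requiring care is the compatibility mentioned above, namely that the label projection commutes with the passage to $\Sym_s$-invariants needed to form the unordered leg labeling of a hairy graph; this is immediate from the $\Sym_s$-invariance of the decomposition in Lemma~\ref{Weyl}.
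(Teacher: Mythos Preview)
Your proposal is correct and follows exactly the same route as the paper: the paper's proof is the single sentence ``This follows immediately from Lemma~\ref{onered},'' and your write-up simply unpacks what that immediacy means, including the harmless observation that the $\Sym_s$-equivariance of the decomposition in Lemma~\ref{Weyl} makes $\pi$ well defined on unordered labelings.
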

 \begin{proof}  
This follows immediately from Lemma~\ref{onered}.
 \end{proof}

We think of the oriented edges of a single hairy graph as ``black" edges.
There is  an obvious ``black-red" map 
$$\br\colon  (\hairy_V)_0\to\ext \lplus_V$$
defined on each generator of $(\hairy_V)_0$ by ``replacing all of the black edges with red edges"; thus if a hairy graph $X$ has degree $d$ and $e$ oriented edges, then $\beta(X)\in (\ext\lplus_V)_{d,e}$.  The ordering on the vertices of $X$ determines the ordering in the wedge product, and $\beta(X)$ comes with a coefficient of $\frac{1}{(2n)^e}$.  The map $\beta$  clearly surjects onto each $(\ext \lplus_V)_{d,e}$, since one can find a hairy graph in the pre-image of a generator by simply converting all of the red edges to black edges.  Since the degree $d$ part of $\ext \lplus_V$  is the direct sum of the $(\ext \lplus_V)_{d,e}$, and all of $\ext\lplus_V$ is the direct sum of its fixed degree subcomplexes, $\beta$ is surjective.

\begin{proposition} \label{lem:closetoid} Let $X$ be a generator of $(\hairy_V)_0$.  Then $\pi\circ\Tr\circ\br(X)$ is equal to $X$ plus a finite sum of  terms of the form $$\frac{1}{(2n)^i}Y$$
with $i\geq 1$.   The hairy graphs $Y$ depend on $X$ but not on $n$. 
\end{proposition}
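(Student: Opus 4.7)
The plan is to expand $\pi\circ\Tr\circ\br(X)$ as a sum over partial matchings of the legs of $\br(X)$, and show that exactly one matching contributes the ``identity term'' $X$ while every other surviving term carries a factor of $\tfrac{1}{(2n)^i}$ with $i\ge 1$.

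If $X$ has $k$ internal vertices, $e$ oriented edges and $h$ hairs, then since $X$ is a generator of $(\hairy_V)_0$ its hair-label tensor $t_X$ lies in $V^{\langle h\rangle}$. By definition of $\br$, the wedge $\br(X)$ has total leg-label $\tfrac{1}{(2n)^e}\Psi_I(t_X)$, where $I=\{I_1,\ldots,I_e\}$ indexes the red-edge position pairs. Applying $\Tr=\exp(T)\circ\iota$ yields a sum indexed by partial matchings $M$ of the $2e+h$ legs: each $M$ contributes a hairy graph whose new oriented edges are the pairs of $M$, weighted by the contracted tensor $\Phi_M\bigl(\Psi_I(t_X)\bigr)$.

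The key step is to classify the matchings that survive $\pi$ and to compute their coefficients. For each red edge $R$ with positions $\{k_R,l_R\}$, one of four things occurs: (B) $M$ contains the pair $\{k_R,l_R\}$; (B$'$) both $k_R$ and $l_R$ are matched in $M$ but not to each other; (C) exactly one of $k_R,l_R$ is matched; (D) neither is matched. In case~(D), the output label still factors through $\Psi_{k_Rl_R}$, so by Lemma~\ref{onered} the projection $\pi$ annihilates the term. Similarly, any pair of $M$ joining two original hair positions contracts $t_X$ via some $\Phi_{ij}$, which vanishes because $t_X\in V^{\langle h\rangle}$. Hence the non-vanishing matchings have no type (D) red edge and no hair--hair pair. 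A direct symplectic calculation using $\sum_{x\in\cB}\langle x,x^*\rangle = 2n$ and $\sum_{x\in\cB}\langle x,v\rangle\,x^* = v$ then shows that a type (B) red edge contributes a factor $\tfrac{1}{2n}\cdot 2n = 1$, while each type (B$'$) or (C) red edge contributes a factor $\tfrac{1}{2n}$ times a scalar independent of $n$. So each non-vanishing $M$-term equals $\tfrac{1}{(2n)^{r_{B'}+r_C}}\,Y$ for a hairy graph $Y$ whose combinatorial data depends on $X$ and $M$ but not on $n$.

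The identity term comes from the unique matching $M=\{\text{red-edge pairs}\}$ with no extras: the red edges turn back into the original oriented edges of $X$, the hair labels $t_X$ are unchanged, and the coefficient is $1$, so $X$ is recovered. Every other surviving $M$ has $r_{B'}+r_C\ge 1$, giving $\tfrac{1}{(2n)^i}Y$ with $i\ge 1$. The main obstacle I anticipate is a careful sign check: showing that the identity matching really reproduces $X$ rather than $\pm X$ requires matching the orientation on each inserted red edge (from $\br$) with the orientation on the corresponding black edge produced by $T$, and verifying compatibility with the vertex ordering in the wedge product and the conventions for $\Psi_{ij}$ and $\Phi_{ij}$. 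Once the signs are straightened out, the rest is a routine combinatorial expansion.
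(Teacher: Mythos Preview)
Your proposal is correct and follows essentially the same approach as the paper: both expand $\Tr\circ\br(X)$ as a sum over partial matchings, invoke Lemma~\ref{onered} to kill terms with a surviving red edge, use $t_X\in V^{\langle h\rangle}$ to kill hair--hair contractions, and identify the identity matching as the unique term returning $X$ with coefficient~$1$. The only organizational difference is that the paper first restricts to matchings of size exactly $e$ (handling $|M|<e$ via a surviving red edge and $|M|>e$ via a forced hair--hair pair), whereas you classify each red edge by type (B)/(B$'$)/(C)/(D); your per-edge accounting is slightly imprecise in case~(C) when several red edges chain together through $M$ (the sum then produces a residual $\Psi$-insertion rather than a clean factor of $\tfrac{1}{2n}$ per edge), but since $\pi$ annihilates those terms anyway the conclusion is unaffected.
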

\begin{proof}
Suppose $X$ has $e$ internal edges and $s$ hairs. Then $\br(X)$ is a wedge of spiders labeled by a generator  of $V^{\langle s+2e\rangle}_{s}$, i.e. by $\Psi_I(t)$ for some $I=\{I_1,\ldots,I_e\}$ and $t\in V^{\langle s \rangle}$.  Thus   $\br(X)$ has $(2n)^e$ terms, each with a coefficient of $\frac{1}{(2n)^e}$.    
We say the spider legs at either end of a red edge of $\beta(X)$ are {\em $X$-paired} and think of them as colored red, while other legs are colored black.

On each term of $\beta(X)$, the trace map $\Tr$ attaches black edges in all possible ways, then adds the results. 
 
 If we attach fewer than $e$ edges, then the terms of $\beta(X)$ can be grouped so that each group is a hairy graph with at least one red edge.  Since the projection $\pi$ is zero on such hairy graphs by Lemma~\ref{lem:kerpi}, we need only consider hairy graphs obtained by attaching exactly $e$ black edges.

If we attach $e$ black edges which coincide precisely with the $e$ red edges, we recover  a copy of $X$, but with a coefficient of $\frac{1}{(2n)^e}$.  Doing this for all terms of $\br(X)$ we obtain $(2n)^e$ such terms, giving a total contribution of  $X$ to $\Tr\circ\beta(X)$.

If we try to glue a black edge to two black legs we produce a coefficient of $0$, since the black legs are labeled by an element of  $V^{\langle s \rangle}$.

We can produce a non-trivial coefficient by gluing a black edge to two red legs that are not $X$-paired, but only if their labels pair non-trivially. This happens on only $(2n)^{e-1}$ terms of $\beta(X)$ (we have removed a  degree of freedom in the labelings); thus the coefficient of the resulting hairy graph $Y$ in $\Tr\circ\beta(X)$  will be $\frac{1}{(2n)^i}$ for some $i\geq 1$.   Similarly, we can connect  a black leg to a red leg  only if the labels pair non-trivially, and this also removes a degree of freedom.  
\end{proof}

\begin{corollary}\label{cor:iso} For $2n=\dim (V)$ large enough with respect to $d$, $\pi\circ\Tr\circ\br\colon (C_k\hairy_V)_{d,0}\to  (C_k\hairy_V )_{d,0}$ is an isomorphism.  
\end{corollary}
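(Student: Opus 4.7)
The plan is to apply Proposition~\ref{lem:closetoid} to realize $\pi\circ\Tr\circ\br$ as a small perturbation of the identity on the finite-dimensional space $(C_k\hairy_V)_{d,0}$, and deduce invertibility from this.

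First I would verify finite-dimensionality and that the map is indeed an endomorphism: $(C_k\hairy_V)_{d,0}=\bigoplus_{s=0}^{d+2k}[\mathcal{AC}_{k,s}\otimes V^{\langle s\rangle}]^{\Sym_s}$ is a finite direct sum of $\Sym_s$-invariants of finite-dimensional tensor products, and neither $\br$ nor $\Tr$ changes the number $k$ of internal vertices or the total degree $d=\sum_v(|v|-2)$, while $\pi$ projects onto the $r=0$ summand.

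Then, writing $\pi\circ\Tr\circ\br=\id+R_n$ and choosing a basis of $(C_k\hairy_V)_{d,0}$ adapted to the decomposition (by combinatorial type of admissible colored hairy graph and by a basis of $V^{\langle s\rangle}$ of pure tensors in $\cB$), Proposition~\ref{lem:closetoid} gives, for each basis element $X$, a finite-sum expression $R_n(X)=\sum_j c_j (2n)^{-i_j}Y_j$ with $i_j\geq 1$ and with the $Y_j$'s and $c_j$'s independent of $n$.  The $Y_j$'s arise by re-pairing legs of the spider-wedge $\br(X)$, an operation determined solely by the combinatorial type of $X$; since there are finitely many admissible combinatorial types of generators of $(C_k\hairy_V)_{d,0}$ (bounded by a function of $d$ and $k$), the number of terms of $R_n(X)$ and the absolute values $|c_j|$ are bounded by a constant $C(d,k)$ uniformly in $X$ and $n$. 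Using the $\ell^1$-norm in the chosen basis, this yields $\|R_n(X)\|_1\leq C(d,k)/(2n)$, hence $\|R_n\|\leq C(d,k)/(2n)$ in the induced operator norm. For $2n>C(d,k)$ the Neumann series $\sum_{m\geq 0}(-R_n)^m$ (which converges trivially in finite dimension) then inverts $\id+R_n$.

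The main obstacle is the uniform combinatorial bound: one must verify that when each $Y_j$ is re-expressed in the chosen basis of $(C_k\hairy_V)_{d,0}$, only a bounded number of basis elements appear with bounded coefficients. This is essentially a bookkeeping exercise following the proof of Proposition~\ref{lem:closetoid}: each $Y_j$ is specified by a definite combinatorial prescription (a non-trivial way to glue black edges to red or hair legs of $\br(X)$, together with a single matching of the remaining labels), and the resulting label tensor naturally expands as a bounded combination of basis elements of $V^{\langle s'\rangle}$, with bounds depending only on the underlying graph type of $X$.
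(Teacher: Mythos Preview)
Your approach is sound in outline but differs from the paper's, and the step you flag as ``the main obstacle'' is where a genuine difficulty hides.  The paper does \emph{not} pick a vector-space basis of $(C_k\hairy_V)_{d,0}$; instead it invokes Lemma~\ref{Nbig} and $\SP(V)$-equivariance of $\pi\circ\Tr\circ\br$ to observe that $(C_k\hairy_V)_{d,0}$ is generated as an $\SP(V)$-module by a \emph{fixed} finite list of elements, independent of $n$.  The endomorphism is then encoded by a matrix of fixed size (with entries in $\F[\SP(V)]$), and Proposition~\ref{lem:closetoid} makes the off-identity part $A(n)$ go to zero entrywise; invertibility follows because it is an open condition on this fixed-size matrix.

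Your route instead tries to bound the operator norm with respect to a basis whose cardinality grows with $n$.  Two issues arise.  First, there is no basis of $V^{\langle s\rangle}$ consisting of pure tensors in the symplectic basis $\cB$: such tensors (those with no symplectically dual pair among the factors) span only a proper subspace.  The paper's pure-tensor basis has factors like $p_1-x$, not elements of $\cB$.  Second, and more seriously, once you choose any pure-tensor basis $\{t_\alpha\}$ of $V^{\langle s\rangle}$, each error term $Y_j$ has label tensor equal to a signed permutation of $t_\alpha$; this is again a pure tensor in $V^{\langle s\rangle}$, but it need not be one of the chosen $t_\beta$, and its expansion in that basis can have coefficients you have not controlled uniformly in $n$.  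So the asserted inequality $\|R_n(X)\|_1\le C(d,k)/(2n)$ does not follow from the combinatorial bound on the number of $Y_j$'s alone.  The paper's use of the $\SP$-module generating set is precisely what absorbs this difficulty: permuting labels is an $\SP(V)$-action, so each $Y_j$ is automatically a group-ring multiple of a fixed generator, and no basis-expansion estimate is needed.
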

\begin{proof} Recall that for fixed $n$ and $d$,  the hairy graph $k$-chain space $C_k\hairy_V\degree{d}$ is finite-dimensional, so the subspace $(C_k\hairy_V)_{d,0}$ is also  finite-dimensional and we can apply Lemma~\ref{Nbig}  to conclude that for sufficiently large $n,$ $(C_k\hairy_V)_{d,0}$ is finitely generated as an $\SP(V)$-module  by a fixed number of generators which does not change as $n$ increases.  Using this basis,  we can represent $\pi\circ\Tr\circ \br\colon (C_k\hairy_V)_{d,0}\to (C_k\hairy_V)_{d,0}$ by a matrix of fixed size with entries in the group ring $\F[\SP(V)]$.
By Lemma~\ref{lem:closetoid}, $\pi\circ\Tr\circ \br=\mathrm{Id}+A(n)$ where 
$\displaystyle\lim_{n\to\infty} A(n)=0$. Since invertibility is an open condition,
$\pi\circ\Tr\circ \br$ is invertible on $(C_k\hairy_V)_{d,0}$ for large enough $n$. 
\end{proof}  

\begin{corollary} For $2n=\dim (V)$ large enough with respect to $d$, $\pi\circ\Tr\colon (\ext^k\lplus_V)\degree{d}\to (C_k\hairy_V)_{d,0}$ is an isomorphism.
\end{corollary}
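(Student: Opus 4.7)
The plan is to obtain this corollary from Corollary~\ref{cor:iso} by showing that the map $\br$, restricted to the relevant finite-dimensional piece, is already a linear isomorphism. Once this is in hand, $\pi\circ\Tr$ can simply be factored through $\br^{-1}$.

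First I would observe that $\br$ restricts to a surjection
$$\br \colon (C_k\hairy_V)_{d,0} \twoheadrightarrow (\ext^k\lplus_V)\degree{d}.$$
The map $\br$ sends a graph with $k$ internal vertices and $e$ internal edges to a wedge of $k$ spiders with $e$ red edges, so it preserves the number of vertices and the degree; the surjectivity discussion preceding Proposition~\ref{lem:closetoid} then yields surjectivity onto each summand $(\ext^k\lplus_V)_{d,e}$, and hence onto their direct sum $(\ext^k\lplus_V)\degree{d}$. Next I would invoke Corollary~\ref{cor:iso}: for $n$ large with respect to $d$, the composition $\pi\circ\Tr\circ\br$ is an automorphism of $(C_k\hairy_V)_{d,0}$. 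In particular the surjection $\br|_{(C_k\hairy_V)_{d,0}}$ must be injective, and since both source and target are finite-dimensional for fixed $k$, $d$, and $n$, it must in fact be a linear isomorphism. The corollary then follows by writing
$$\pi\circ\Tr \;=\; (\pi\circ\Tr\circ\br)\circ\br^{-1},$$
a composition of two isomorphisms.

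I do not expect a real obstacle at this stage: the analytic content, that $\pi\circ\Tr\circ\br$ equals the identity plus an $O(1/n)$ perturbation on an $\SP(V)$-module of bounded generator count, has already been carried out in Proposition~\ref{lem:closetoid} and Corollary~\ref{cor:iso}. The one point I would want to verify carefully is that $\pi\circ\Tr$ genuinely lands in $(C_k\hairy_V)_{d,0}$, but this is immediate: $\Tr$ is a chain map that preserves both degree and vertex count, and $\pi$ by definition projects away from the pieces containing red edges.
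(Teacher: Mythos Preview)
Your proposal is correct and follows essentially the same route as the paper: both argue that $\br$ is surjective (from the discussion preceding Proposition~\ref{lem:closetoid}) and injective (since $\pi\circ\Tr\circ\br$ is an isomorphism by Corollary~\ref{cor:iso}), hence an isomorphism, and then factor $\pi\circ\Tr=(\pi\circ\Tr\circ\br)\circ\br^{-1}$. The only extra remark you add is the check that $\pi\circ\Tr$ lands in $(C_k\hairy_V)_{d,0}$, which the paper leaves implicit.
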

\begin{proof} By Corollary~\ref{cor:iso}, the composition $\pi\circ\Tr\circ\beta$ is an isomorphism, showing that $\beta$ is injective.  Since we already know that $\beta$ is surjective, we have that $\beta$ is an isomorphism, and thus $\pi\circ\Tr$ is also an isomorphism.  
\end{proof}

Note that although $\Tr$ is compatible with the inclusions $V_n\hookrightarrow V_{n+1}$, the projection $\pi$ is not, so the above isomorphisms do not induce a map on the direct limit.  Nevertheless, since the chain complexes which compute homology in each degree are isomorphic for $\dim V$ sufficiently large we do have:

\begin{corollary} Fix a degree $d$.  Then there is an $N$ such that if $\dim V>N$,  
$\pi\circ\Tr$ induces an
 isomorphism $ H_k(\lplus_V)\degree{d}\cong H_k((\hairy_V)_0)\degree{d}$.
\end{corollary}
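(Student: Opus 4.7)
The plan is to derive the homology-level statement directly from the chain-level isomorphism of the preceding corollary, by first upgrading $\pi\circ\Tr$ to a chain map and then observing that only finitely many chain groups contribute in a fixed total degree.

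First I would verify that $\pi\colon\hairy_V\to(\hairy_V)_0$ is a chain map. The paper has already recorded that $(\hairy_V)_{d,r}$ is a subcomplex of $\hairy_V$ for every $(d,r)$, since the boundary operator contracts internal edges and does not touch the hair labels; hence $\hairy_V$ splits as a direct sum of chain complexes indexed by $(d,r)$, and projection onto the $r=0$ summand commutes with $\partial$. Composing with $\Tr$, which is already known to be a chain map, gives a chain map $\pi\circ\Tr\colon\ext^*\lplus_V\to(\hairy_V)_0$.

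Next I would bound the range of $k$ relevant in fixed degree $d$. Every spider in $\lplus_V$ has at least three legs and so has degree at least one; equivalently, each internal vertex of an admissible graph contributes at least $1$ to $\sum_v(|v|-2)=d$. Thus $(\ext^k\lplus_V)\degree{d}$ and $(C_k\hairy_V)_{d,0}$ both vanish for $k>d$, and only finitely many values of $k$ occur. Applying the previous corollary to each $k\in\{0,1,\dots,d\}$ supplies an integer $N_k$ such that $\pi\circ\Tr$ is an isomorphism of the $k$th chain groups whenever $\dim V>N_k$. Setting $N=\max_{0\le k\le d} N_k$, for $\dim V>N$ the map $\pi\circ\Tr$ is an isomorphism of chain complexes in total degree $d$, and a chain isomorphism induces an isomorphism on homology in every $k$. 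The only point requiring any care is the uniform choice of $N$, which succeeds precisely because only finitely many $k$ contribute in a fixed degree; the chain-map property of $\pi$ follows at once from the subcomplex observation already in the paper, so there is no deeper obstacle.
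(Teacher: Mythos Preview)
Your argument is correct and is essentially the same as the paper's: the paper simply remarks that ``the chain complexes which compute homology in each degree are isomorphic for $\dim V$ sufficiently large,'' and you have filled in the two implicit details—that $\pi\circ\Tr$ is a chain map (immediate from the subcomplex decomposition already noted) and that only finitely many values of $k$ occur in a fixed degree $d$ (since each internal vertex contributes at least $1$ to the degree), so a single $N=\max_{0\le k\le d}N_k$ suffices.
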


We note that surjectivity of $\pi\circ\Tr$ follows also from Theorem 3.6 of \cite{CKV}. since the image of the map which is proved to be surjective in Theorem 3.6 generates $H_k((\hairy_V)_0)\degree{d}$ as a symplectic module.

Finally, we have the following statement about the decomposition of hairy graph homology into irreducible representations.

\begin{corollary}\label{cor:sp-rep}
Fix a degree $d$, and let 
$$H_k(\hairy_V)\degree{d}\cong \bigoplus_{\lambda\in\Lambda} \SF{\lambda}(V)$$  be the decomposition of   $H_k(\hairy_V)\degree{d}$
 into $\GL(V)$-Schur functors, where $\Lambda$ is some index set. Then its $\SP(V)$-decomposition is
$$H_k(\lplus_V)\degree{d}\cong \bigoplus_{\lambda\in\Lambda} \SF{\langle\lambda\rangle} (V)$$ for the same index set $\Lambda$.
\end{corollary}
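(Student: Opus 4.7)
The plan is to combine the preceding corollary, which provides an $\SP(V)$-equivariant isomorphism $H_k(\lplus_V)\degree{d}\cong H_k((\hairy_V)_0)\degree{d}$ for $\dim V$ sufficiently large, with Schur--Weyl-type decompositions of the label spaces $V^{\otimes s}$ and $V^{\langle s\rangle}$. The goal is to show that the $\SP(V)$-decomposition of $H_k(\lplus_V)\degree{d}$ uses the same multiset $\Lambda$ of partitions as the $\GL(V)$-decomposition of $H_k(\hairy_V)\degree{d}$.

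First I would record the parallel descriptions
$$C_k \hairy_V = \bigoplus_s [\mathcal{AC}_{k,s}\otimes V^{\otimes s}]^{\Sym_s}, \qquad C_k(\hairy_V)_0 = \bigoplus_s [\mathcal{AC}_{k,s}\otimes V^{\langle s\rangle}]^{\Sym_s},$$
and note that the boundary operator acts only on the $\mathcal{AC}_{k,s}$ factor, leaving the leaf labels (and in particular both $s$ and the total degree $d$) alone. In characteristic zero, taking $\Sym_s$-invariants is exact, so homology commutes with the label tensor factor.

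Next I would invoke two Schur--Weyl-type decompositions. Classical Schur--Weyl gives
$$V^{\otimes s} \cong \bigoplus_{|\lambda|=s} S^\lambda \otimes \SF{\lambda}(V)$$
as a $\Sym_s\times \GL(V)$-module, where $S^\lambda$ is the Specht module, while the Weyl construction of symplectic irreducibles refines this to
$$V^{\langle s\rangle} \cong \bigoplus_{|\lambda|=s} S^\lambda \otimes \SpF{\lambda}(V)$$
as a $\Sym_s\times \SP(V)$-module, valid once $\dim V$ is large enough that no Sp-Weyl module on the right vanishes. Substituting and passing to homology yields, for every partition $\lambda$, that both the multiplicity of $\SF{\lambda}(V)$ in $H_k(\hairy_V)\degree{d}$ and the multiplicity of $\SpF{\lambda}(V)$ in $H_k((\hairy_V)_0)\degree{d}$ equal $\dim [H_k(\mathcal{AC}_{|\lambda|})\degree{d}\otimes S^\lambda]^{\Sym_{|\lambda|}}$. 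Hence the two index multisets $\Lambda$ coincide. Combining with the preceding corollary delivers the claimed $\SP(V)$-decomposition of $H_k(\lplus_V)\degree{d}$.

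The main technical obstacle is justifying the $\Sym_s$-equivariant decomposition of $V^{\langle s\rangle}$ above; this is the classical Weyl construction of symplectic irreducibles as traceless Young-symmetrized tensors, and it applies uniformly once $\dim V$ is at least twice the largest $s$ contributing---a value bounded by $d+2k$, consistent with the hypothesis of the preceding corollary.
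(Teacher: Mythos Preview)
Your proposal is correct and follows essentially the same route as the paper: both reduce to the fact that the boundary operates only on $\mathcal{AC}_{*,s}$, so $H_k(\hairy_V)$ and $H_k((\hairy_V)_0)$ are obtained from $H_k(\mathcal{AC}_s)$ by applying $(-)\otimes_{\Sym_s}V^{\otimes s}$ and $(-)\otimes_{\Sym_s}V^{\langle s\rangle}$ respectively, and then one invokes the Weyl construction $\SpF{\lambda}(V)=P_\lambda\otimes_{\Sym_s}V^{\langle s\rangle}$ from \cite[\S17.3]{FH}. The only cosmetic difference is that the paper first decomposes $H_k(\mathcal{AC}_s)$ into Specht modules $P_\lambda$ and then tensors, whereas you first apply Schur--Weyl to $V^{\otimes s}$ and $V^{\langle s\rangle}$ and then take $\Sym_s$-invariants; these are dual formulations of the same identity.
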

\begin{proof}
Recall 
$$C_k\hairy_V =\bigoplus_{s\geq 0} [ \mathcal{AC}_{k,s}\otimes V^{\otimes s}]^{\Sym_s}\cong
\bigoplus_{s\geq 0}\mathcal{AC}_{k,s}\otimes_{\Sym_s} V^{\otimes s}.$$
Observe that the boundary maps on $C_k\hairy_V$ and $C_k(\hairy_V)_0$ are induced by a $\Sym_s$-equivariant boundary map on $\oplus \mathcal{AC}_{k,s}$, so we have that 
$$H_k(\hairy_V)=\oplus_{s\geq 0}H_k(\mathcal{AC})\otimes_{\Sym_s}V^{\otimes s}$$
and
$$H_k((\hairy_V)_0)=\oplus_{s\geq 0}H_k(\mathcal{AC})\otimes_{\Sym_s}V^{\langle s\rangle}. $$

Suppose that $H_k(\mathcal{AC})=\oplus_{\lambda\in \Lambda} P_{\lambda}$, where $P_\lambda$ is the irreducible representation of $\Sym_{s}$ corresponding to the partition $\lambda$. Then, since $\SF{\lambda}(V)$ is defined to be $P_\lambda\otimes_{\Sym_s}V^{\otimes s}$, we have that $H_k(\hairy)\cong \oplus_{\lambda \in \Lambda} \SF{\lambda}(V)$. Similarly, by \cite[section 17.3]{FH} we have that $\SpF{\lambda}(V)=P_\lambda\otimes_{\Sym_s}V^{\langle s\rangle}$, and so
 $H_k(\lplus_V)\cong H_k((\hairy_V)_0)=\oplus_{\lambda\in \Lambda} \SpF{\lambda}(V)$.    
\end{proof}

\begin{example} In this section we have decomposed the hairy graph complex according to the degree of hairy graphs.  The differential also respects the rank of the fundamental group, and we can also decompose according to this rank.  Since the trace map embeds $H_*(\lplus_V) $ into the homology of the hairy graph complex, we get an induced grading on the homology of $\lplus_V$.  In particular, the abelianization can be decomposed as
$$\lplus_V^{\mathrm{ab}}=\bigoplus_{r\geq 0}(\lplus_V^{\mathrm{ab}})_{(r)}.$$ 
These are symplectic modules, and for the case $\cO=\lie$ it is easy to compute their decomposition into irreducibles in ranks 0 and 1 using hairy graph techniques   as  
$$(\lplus_V^{\mathrm{ab}})_{(0)}=\SpF{[1^3]}(V) , \,\,\,\,\, (\lplus_V^{\mathrm{ab}})_{(1)}=\bigoplus_{k\geq 0}\SpF{[2k+1]}(V),$$
 where as before $\SpF{\lambda}(V)$ denotes the irreducible $\SP(V)$-module corresponding to the partition $\lambda$.
These pieces of the abelianization were found by Morita, who constructed a surjective map onto them using his original trace map in\cite{Morita}.  In \cite{CKV} we found rank two hairy graph homology classes in the image of our trace map, thus supplying new elements of  $(\lplus_V^{\mathrm{ab}})_{(2)}$.  Corollary~\ref{cor:sp-rep} can now be used together with the computations from \cite{CKV}  to determine the rank two piece of $\lplus^{\mathrm{ab}}$ explicitly as a symplectic module.  We obtain
 $$(\lplus_V^{\mathrm{ab}})_{(2)}= \bigoplus_{k>\ell\geq 0}\left(\SpF{[2k,2\ell]}(V)\otimes \mathcal S_{2k-2\ell+2}\right)\oplus \left(\SpF{[2k+1,2\ell+1]}(V)\otimes\mathcal M_{2k-2\ell+2}\right)$$
 where $\mathcal S_{w}$ and $\mathcal M_w$ are the vector spaces of classical weight $w$ cusp forms and modular forms respectively. 
 Hand calculations indicate that $(\lplus_V^{\mathrm{ab}})_{(3)}$ is also nonzero and related to modular forms, though a complete calculation has not yet been done. The higher rank pieces are unknown, but there is no reason to expect them to be trivial.
\end{example}

\section{From  the hairy graph complex to the  the graph complex}\label{sec:assembly}

In this section, we show how to use hairy graph homology classes in all dimensions to produce ordinary graph cohomology classes in higher dimensions.  Applying this to the Lie and associative operads gives potential new homology classes for $\Out(F_n)$, $\Aut(F_n)$ and $\operatorname{Mod}(g,n)$.
In the last part of this section we sketch a translation of what we have done into the language of modular operads.
 
\subsection{Assembly map}\label{assembly}

We first define a map $S \colon   \hairy_V\to\hairy_V$  which  
``splits" edges of a hairy graph one at a time, then adds.  Given a hairy graph $X$, an oriented edge $e\in X$ and an element $x\in\cB$, we can form a new hairy graph $X_{e,x}$ by  cutting  $e,$ labeling the initial
cut point with $x$ and the terminal cut point with its dual $x^*$.    Then
$$S(X)=\frac{1}{\dim(V)}\sum_{e,x} X_{e,x}$$
where the sum is 
over all oriented edges $e$ of $X$ and basis elements $x\in \cB\ $(see Figure~\ref{split} for an example).
If $X$ has $k$ oriented edges and $\dim(V)=2n$, then $S(X)$ has $2nk$ terms.

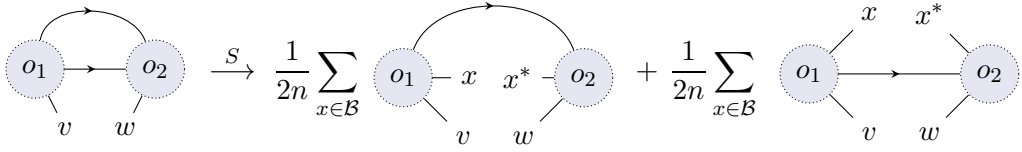
\begin{figure}
$$
\begin{minipage}{2.6cm}
\begin{tikzpicture}[scale=.8]
\tikzstyle{operad}=[draw,shape=circle, densely dotted, fill=blue!10];
\begin{scope}[decoration={markings,mark = at position 0.5 with {\arrow{stealth}}}]
\node [operad] (left) at (-1,0) {$o_1$};
\node [operad] (right) at (1,0) {$o_2$};
\node (v) at (-.5, -1) {$v$};
\node (w) at (.5,-1) {$w$};
\midarrow (left) -- (right);
\midarrow (left) to [out=80, in=110] (right);
 \draw (v) -- (left);
 \draw (w) -- (right);
 \end{scope}
 \end{tikzpicture}
 \end{minipage}
\buildrel S\over\longrightarrow\,\, \frac{1}{2n}\sum_{x\in\mathcal B}\,\,
 \begin{minipage}{3.4cm}
 \begin{tikzpicture}[scale=.8]
 \tikzstyle{operad}=[draw,shape=circle, densely dotted, fill=blue!10];
 \begin{scope}[decoration={markings,mark = at position 0.5 with {\arrow{stealth}}}]
\node [operad] (left) at (0,0) {$o_1$};
\node [operad] (right) at (3,0) {$o_2$};
\node (v) at (1, -1) {$v$};
\node (w) at (2,-1) {$w$};
\node (x) at (1.1,0) {$ x $};
\node (xstar) at (1.9,0) {$ x^\ast $};
 \draw  (left) to (x);
 \draw (xstar) -- (right);
\midarrow (left) to [out=70, in=110] (right);
 \draw (v) -- (left);
 \draw (w) -- (right);
 \end{scope}
\end{tikzpicture}
\end{minipage}
+\,  \frac{1}{2n}\displaystyle\sum_{x\in\mathcal B}\ \ 
\begin{minipage}{4cm}
 \begin{tikzpicture}[scale=.8]
 \tikzstyle{operad}=[draw,shape=circle, densely dotted, fill=blue!10];
 \begin{scope}[decoration={markings,mark = at position 0.5 with {\arrow{stealth}}}]
\node [operad] (left) at (0,0) {$o_1$};
\node [operad] (right) at (3,0) {$o_2$};
\node (v) at (1, -1) {$v$};
\node (w) at (2,-1) {$w$};
\midarrow (left) -- (right);
 \node (x) at (1,1) {$ x $};
\node (xstar) at (2,1) {$ x^\ast $};
 \draw  (left) to (x);
 \draw (xstar) -- (right);
 \draw (v) -- (left);
 \draw (w) -- (right);
 \end{scope}
\end{tikzpicture}
\end{minipage}
$$
  \caption{The map $S$}\label{split}
\end{figure}

It is easy to check that $S$ is a chain map, so the exponential $e^S\colon \hairy_V\to\hairy_V$ defined by
$$e^S(X) = X + S(X) + \frac{1}{2}S^2(X) + \frac{1}{3!}S^3(X)+\ldots$$
is also a chain map.  Note that the sum is finite, since $X$ has only a finite number of edges that can be split.

 The primitive part $\mathcal{PG}$ of Kontsevich's graph complex sits as a subcomplex of $\hairy_V$ for any $V$, namely as the subcomplex spanned by connected graphs with no hairs.   Splitting any edge of such a graph produces an $\SP$-invariant in $\hairy_V$, so restricting $e^S$ to $\mathcal{PG}$ gives a chain map $$\mathcal{PG}\to(\hairy_V)^{\SP}\subset \hairy_V.$$

The chain complexes $\mathcal{PG}, (\hairy_V)^{\SP}$ and $\hairy_V$ each decompose as a direct sum of subcomplexes according to the degree of graphs.   In each degree, the $k$-chains are  finite-dimensional. Since $e^S$ preserves degree it restricts to each subcomplex. Taking the  dual in each degree of the induced maps  $H_k(\mathcal{PG})\to H_k(\hairy_V)^{\SP}\to H_k(\hairy_V) $  gives maps on the degree-graded dual
\begin{equation}\label{chainlevel}(H_*(\hairy_V))^\dagger\to (H_*(\hairy_V)^{\SP})^\dagger\to H_*(\mathcal{PG})^\dagger.
\end{equation}

We now work on further identifying the first two spaces.  
 
 Given any graded vector space $U$, let $\F[U]$ denote the free graded commutative algebra on $U$, where graded commutativity means that $xy=(-1)^{|x||y|}yx$. Denote by $\mathcal{PH}_V\subset \hairy_V$ the subspace spanned by connected hairy graphs. By decomposing a graph into its connected components and grading by the number of internal vertices we get an isomorphism $\hairy_V\cong \F[\mathcal{PH}_V]$. 
Since the $\SP$-action commutes with the boundary map, we get both
 \[
 H_*(\hairy_V)=H_*(\F[\mathcal{PH}_V]) \qquad\hbox{and}\qquad  H_*(\hairy_V)^{\SP}=H_*(\F[\mathcal{PH}_V])^{\SP}.
 \]
 By a theorem of Milnor-Moore \cite{MM65}, any graded commutative cocommutative Hopf algebra over a field of characteristic zero is a graded polynomial algebra on the space of primitive elements.
Taking primitives commutes with homology, so we have
\[
H_*(\F[\mathcal{PH}_V])=\F[H_*(\mathcal{PH}_V)] \qquad\hbox{and}\qquad H_*(\F[\mathcal{PH}_V])^{\SP}=\F[H_*(\mathcal{PH}_V)]^{\SP}.
\]

Taking graded duals (graded by degree) and applying the Universal Coefficient theorem gives 
 \[
\F[H_*(\mathcal{PH}_V)]^\dagger=\F[H_*(\mathcal{PH}_V)^\dagger]=\F[H^*_c(\mathcal{PH}_V)]   
\]
\centerline{ and }
\[(\F[H_*(\mathcal{PH}_V)]^{\SP})^\dagger=(\F[H_*(\mathcal{PH}_V)^\dagger])_{\SP}=\F[H^*_c(\mathcal{PH}_V)]_{\SP}.
\]
Thus Equation~\ref{chainlevel} becomes the following ``assembly map" 
\begin{equation}
 \F[H^*_c(\mathcal{PH}_V)]\to \F[H^*_c(\mathcal{PH}_V)]_{\SP}\to H_*(\mathcal{PG})^\dagger =H_c^*(\mathcal{PG}).
\end{equation}

In other words, we have an explicit recipe for assembling hairy graph cohomology classes into $\cO$-graph cohomology classes, as follows: 
\begin{itemize} 
\item Represent cohomology classes in $H^*_c(\mathcal{PH}_V)$ by  cocycles $\zeta_1,\ldots,\zeta_m$ defined on $\mathcal{PH}_V$.  
 \item Let $\zeta$ be the image of the monomial $\zeta_1\ldots\zeta_m$ in $\F[H^*_c(\mathcal{PH}_V)]_{\SP}$.  
 \item To evaluate  $\zeta$ on a generator of $\mathcal{PG}$, i.e. on a connected $\cO$-graph $\G,$ first apply $e^S$ to break $\G$ into a sum of connected hairy graphs in all possible ways. The value of $\zeta$ is non-zero only on terms with exactly $m$ connected components $X_1,\ldots,X_m$, in which case its value is  $$\sum_{\sigma\in\Sym_m} \epsilon(\sigma) \zeta_{1}(X_{\sigma(1)})\ldots  \zeta_m(X_{\sigma(m)}),$$
 where the (Koszul) sign $\epsilon (\sigma)$ is determined by the equation $X_1\cdots X_k=\epsilon (\sigma) X_{\sigma(1)}\cdots X_{\sigma(k)}$.
\end{itemize}
 
\begin{example} Let $\cO=\lie$. 
Then the first homology $H_1(\mathcal{PH}_V)$ has a summand isomorphic to $S^3V$, where a generator $v_1 v_2 v_3$ is represented by the hairy graph depicted below (see \cite{CKV} for a detailed explanation of this construction).

$$ 
\begin{minipage}{2.3cm}
\begin{tikzpicture} [thick, scale=.7]
\draw[densely dotted,<-] (-1,0) arc (180:270:1);
\draw[densely dotted] (0,1) arc (90:180:1);
\draw (0,-1) arc (-90:90:1);
\draw (1,0) -- (1.5,0) node [right] {$v_2$};
\draw (.7,.7) -- (1.5,.7) node [right] {$v_1$};
\draw (.7,-.7) -- (1.5,-.7) node [right] {$v_3$};
\end{tikzpicture}
\end{minipage}
\longleftrightarrow\,\,
v_1v_2v_2\in S^3V
$$

Let  $\zeta_1$ be the cocycle which evaluates to $1$ on the above hairy graph if  $v_1=v_2=v_3=p_1$ and is zero on any other connected hairy graph. Similarly define $\zeta_2$ to be $1$ if $v_1=v_2=v_3=q_1$  and is $0$ elsewhere.
 Let $\G$ be an $\cO$-graph consisting of two loops with three edges joining them:
$$ \begin{tikzpicture} [thick, scale=.7]
\draw[densely dotted,<-] (-1,0) arc (180:270:1);
\draw[densely dotted] (0,1) arc (90:180:1);
\draw (0,-1) arc (-90:90:1);
\draw (4,1) arc (90:270:1);
\draw[densely dotted] (4,-1) arc (-90:0:1);
\draw[densely dotted,<-] (5,0) arc (0:90:1);
\draw (1,0) -- (1.5,0);
\draw (.7,.7) -- (1.5,.7);
\draw (.7,-.7) -- (1.5,-.7);
\draw (3,0) -- (2.5,0);
\draw (3.3,.7) -- (2.5,.7);
\draw (3.3,-.7) -- (2.5,-.7);
\draw[densely dotted,->] (1.5,0)--(2,0);
\draw[densely dotted] (2,0)--(2.5,0);
\draw[densely dotted,->] (1.5,.7)--(2,.7);
\draw[densely dotted] (2,.7)--(2.5,.7);
\draw[densely dotted,->] (1.5,-.7)--(2,-.7);
\draw[densely dotted] (2,-.7)--(2.5,-.7);
\end{tikzpicture} 
 $$
 When we evaluate $\zeta_1\zeta_2$ on $e^S(\G)$,  the only terms that will contribute are those where all three joining edges are snipped, one of the connected components  has all labels equal to $p_1$ and the other has all labels equal to $q_1$. Therefore  evaluating $\zeta_1\zeta_2=-\zeta_2\zeta_1$ on $e^S(\G)$ gives $\frac{1}{(2n)^3} +\frac{1}{(2n)^3}$.

Recall that Kontsevich's theorem~\ref{Kontsevich} for $\cO=\lie$ gives
\[
H_c^k(\mathcal{PG})\degree{d}\cong H_{d-k}(\Out(F_{\frac{d}{2} + 1})).
\]
 The cocycle we have just constructed corresponds to the first Morita class $\mu_1\in H_4(\Out(F_4)$.   
We give many other examples for $\cO=\lie$ in \cite{CHKV}.  For very small values of $n$ many of the cocycles constructed in this way  for $\Out(F_n)$  have been shown to be non-trivial in homology.  
\end{example}

The assembly map basically reformulates the question of whether cocycles produced in this way are non-trivial  as a question about understanding the behavior of the map $e^S\colon \mathcal{PG}\to \hairy$  on homology.  

Our assembly map is closely related to the Feynman transform defined by Getzler and Kapranov in their theory of modular operads \cite{GK}.  We outline this connection below, but  first we pause to compare the construction of cocycles via the assembly map with the abelianization construction from \cite{CKV}.

\subsection{Comparison to the abelianization construction}
Let $A\colon ( \ext H^1_c(\hairy))^{\SP}\to H^*(\mathcal G)$ denote the assembly map restricted to first cohomology. In   \cite{CKV} we produced $\SP$-invariant cohomology classes for $\lplus$, and therefore for $\Out(F_n),$ from $\ext H^1_c(\hairy)$  by applying the cohomology and symplectic invariant functors to the chain of maps $\lplus\to \lplus^{{\rm ab}}\to H_1(\hairy)$, where $\lplus_{{\rm ab}}$ and $H_1(\hairy)$ are regarded as abelian Lie algebras.  Let $B\colon (\ext H^1_c(\mathcal H))^{\SP}\to H^*_c(\mathfrak h)^{\SP}$ denote this map.
In this section we show that the maps $A$ and $B$ give the same information. Thus the assembly map is a strict generalization of the abelianization construction.

Recall from \cite{CV} that there is a chain map $\psi\colon \ext \mathfrak h \to \cG$ which can be thought of, in the current context, as the part of $\Tr$ that lands in graphs with no hairs. It is defined as the sum over all ways of gluing in black edges along a complete matching of the legs in a wedge of spiders.
 For $V$ of large enough dimension compared to $d$, it induces an isomorphism $\ext(\lplus)^{\SP}\degree{d}\overset{\cong}{\longrightarrow} C_*(\cG)\degree{d}$.  In order to state the following proposition, we define an automorphism $\hat{\psi}\colon ( \ext H_1(\hairy))^{\SP}\degree{d}\to ( \ext H_1(\hairy))^{\SP}\degree{d}$ as follows.   
 It is defined, like $\psi,$ by summing over gluing edges into a matching of the legs of a wedge of hairy homology classes, but the glued-in edges are red edges in the sense of section 3.
 Like $\psi$, this is an isomorphism for $\dim(V)$ large enough compared to $d$.

\begin{proposition}
The following diagram commutes.
$$
\xymatrix{
\ext(\lplus)^{\SP} \ar@{>->>}[d]^{\psi_*} \ar[r]&\ext(\lplus_{\rm ab})^{\SP}\ar[r]&(\ext H_1(\hairy))^{\SP}\ar[d]^{\hat{\psi}}\\
C_*(\cG)\ar[r]^{\!\!\!\!\!\exp S_*}&\F[C_*(\mathcal{PH})]^{\SP}\ar@{->>}[r]&( \ext H_1(\hairy))^{\SP}
}
$$
where the map $\F[C_*(\mathcal{PH})]\to \ext H_1(\hairy)$ is defined by first projecting $\F[C_*(\mathcal{PH})]\to \F[C_1(\mathcal {PH})]=\ext C_1(\mathcal {H})$ and then projecting $C_1(\mathcal H)\twoheadrightarrow H_1(\mathcal H)$.
\end{proposition}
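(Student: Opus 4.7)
The plan is to expand both composites on a generator $\mathbf{x}=x_1\wedge\cdots\wedge x_k\in\ext^k(\lplus)^{\SP}$ (a wedge of spiders) and exhibit an explicit bijection between the surviving summands of the two paths.

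For the down-then-right path, $\psi_*(\mathbf{x})=\sum_M\psi_M(\mathbf{x})$ is a sum over complete matchings $M$ of the legs of $x_1,\ldots,x_k$, with $\psi_M(\mathbf{x})$ the closed graph whose black edges are the pairs of $M$, carrying coefficient $\prod_{(a,b)\in M}\langle a,b\rangle$. Next, $\exp S_*$ distributes over subsets $E'\subseteq M$ of edges to cut, each cut contributing $\frac{1}{2n}\sum_{x\in\cB}x\otimes x^*$ on the newly-created pair of hairs. The projection to $\F[C_1(\mathcal{PH})]^{\SP}$ retains only those summands in which every connected component has exactly one internal vertex; since $\psi$ and $S$ never create or delete vertices, this forces $E'$ to contain every inter-spider edge, while intra-spider self-loops may be freely cut or retained. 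Decompose $M=M_{\mathrm{inter}}\sqcup M_{\mathrm{intra}}$ and $E'=M_{\mathrm{inter}}\sqcup E'_{\mathrm{intra}}$ with $E'_{\mathrm{intra}}\subseteq M_{\mathrm{intra}}$.

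For the right-then-down path, abelianization is the identity on spiders, and $\lplus_{\mathrm{ab}}=H_1(\lplus)\to H_1(\hairy)$ is $\Tr_*$ on degree-one classes. Representing $\Tr_*[x_i]$ by the chain $\Tr(x_i)=\sum_r\frac{1}{r!}T^r(x_i)$ exhibits it as a sum over intra-matchings $N^{(i)}$ of legs of $x_i$, each pair becoming a black self-loop with coefficient $\langle a,b\rangle$. Then $\hat\psi$ sums over partial red-edge matchings $R$ of the remaining hairs; as the red-edge analog of $\psi$, each red edge at legs $(a,b)$ carries the coefficient $\frac{\langle a,b\rangle}{2n}\sum_x x\otimes x^*$, namely the $\Psi_{ij}$-insertion of Section~\ref{sec:image} multiplied by $\psi$'s symplectic factor.

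The bijection sends the bottom datum $(M_{\mathrm{inter}},M_{\mathrm{intra}},E'_{\mathrm{intra}})$ to the top datum $(N,R)$ via $N^{(i)}=(M_{\mathrm{intra}}\setminus E'_{\mathrm{intra}})|_{x_i}$ and $R=M_{\mathrm{inter}}\sqcup E'_{\mathrm{intra}}$; this is clearly a bijection, and the underlying one-vertex hairy graph at each $x_i$ coincides on the two sides. The coefficient identity $\prod_M\langle a,b\rangle\cdot\frac{1}{(2n)^{|E'|}}=\prod_N\langle a,b\rangle\cdot\prod_R\frac{\langle a,b\rangle}{2n}$ follows from $M=N\sqcup R$ and $|E'|=|R|$; the $\frac{1}{r!}$ in $\Tr=\exp T$ absorbs the overcounting inherent in $T^r$ treating the $r$ self-loops of $N^{(i)}$ as ordered; and the wedge order $x_1,\ldots,x_k$ is preserved throughout, so Koszul signs agree. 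The main obstacle is pinning down the coefficient convention for $\hat\psi$ described above; this is forced by $\hat\psi$ being the red-edge analog of $\psi$, since $S$ applied to a black edge produced by $\psi$ yields exactly this expression. Once the convention is fixed, the proof is a direct combinatorial identification.
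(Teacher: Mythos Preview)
Your argument is essentially correct and supplies the combinatorial detail that the paper leaves implicit. The paper's own proof is a single sentence---``The automorphism $\hat\psi$ mimics the effect of $\exp(S)\circ\psi_*$''---which amounts to asserting that $\hat\psi$ was defined precisely so that the diagram commutes. Your expansion of both composites and the term-by-term bijection is exactly the verification behind that assertion, so the approach is the same, just made explicit.

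One inconsistency to fix: you describe $\hat\psi$ as summing over \emph{partial} red-edge matchings $R$ of the remaining hairs, but your own bijection forces $R$ to be a \emph{complete} matching of those hairs, since $N\sqcup R=M$ with $M$ complete. This is also what ``defined like $\psi$'' demands, as $\psi$ sums over complete matchings. With ``partial'' read as ``complete'' your bijection is a genuine bijection and the argument goes through; if $\hat\psi$ really ranged over partial $R$, the top path would acquire extra terms (hairs retaining their original labels) with no counterpart on the bottom, and the argument would fail. Your reading of the $\hat\psi$ coefficient convention---each red edge contributing $\tfrac{\langle a,b\rangle}{2n}\sum_{x\in\cB} x\otimes x^*$---is the correct one, being exactly what $S$ applied to a $\psi$-produced black edge yields.
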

\begin{proof}
The automorphism $\hat{\psi}$ mimics the effect of $\exp(S)\circ\psi_*$.
\end{proof}
\begin{corollary}
We have $B\hat\psi^*=\psi^*A.$ Hence, since $\psi^*$ and $\hat\psi^*$ are chain isomorphisms for  each fixed degree $d$ and $\dim V$ large enough, the maps $A$ and $B$ give the same information.
\end{corollary}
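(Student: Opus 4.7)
The plan is to derive $B\hat\psi^* = \psi^* A$ simply by dualizing the commutative square furnished by the preceding proposition. Concretely, I would apply the degree-graded continuous dual and cohomology to every arrow and object of that diagram. Because $\F$ has characteristic zero, the $\SP$-invariant functor is exact and commutes with both homology and duality, and the universal coefficient theorem lets me identify $H_*(-)^\dagger$ with $H^*_c(-)$. Under these operations the two vertical arrows $\psi_*$ and $\hat\psi$ become the cohomological maps $\psi^*$ and $\hat\psi^*$, and the square remains commutative.

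The next step is to recognize the two horizontal compositions of the dualized square as $A$ and $B$. The top composition $\ext(\lplus)^{\SP}\to\ext(\lplus_{\rm ab})^{\SP}\to(\ext H_1(\hairy))^{\SP}$ is, by construction, the chain-level source of $B$; dualizing gives exactly $B\colon(\ext H^1_c(\hairy))^{\SP}\to H^*_c(\lplus)^{\SP}$. For the bottom row, the composition $C_*(\cG)\xrightarrow{\exp S_*}\F[C_*(\mathcal{PH})]^{\SP}\twoheadrightarrow(\ext H_1(\hairy))^{\SP}$ is precisely the chain-level recipe for the assembly map from Section~\ref{assembly}, specialized to first-cohomology inputs: $e^S$ splits the edges of the $\cO$-graph, projection onto $\F[C_1(\mathcal{PH})]^{\SP}$ keeps the terms with the correct number of connected hairy components, and projection to $H_1$ evaluates the chosen one-cohomology cocycles. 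Dualizing therefore turns the bottom row into $A\colon (\ext H^1_c(\hairy))^{\SP}\to H^*_c(\mathcal{PG})$. Commutativity of the dualized square now reads $B\hat\psi^* = \psi^* A$.

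For the final assertion I would invoke that in each fixed degree $d$ and for $\dim V$ large compared to $d$, both $\psi^*$ and $\hat\psi^*$ are isomorphisms: for $\psi$ this is Kontsevich's reduction of $\ext(\lplus)^{\SP}\degree{d}$ to $C_*(\cG)\degree{d}$ recorded in Section~\ref{sec:basics}, and $\hat\psi$ is the exact analogue of $\psi$ on the hairy side, so the same invariant-theoretic argument (in the spirit of Lemma~\ref{Nbig} and Corollary~\ref{cor:iso}) yields the conclusion. The equation then solves to $B=\psi^*A(\hat\psi^*)^{-1}$ and $A=(\psi^*)^{-1}B\hat\psi^*$, showing that $A$ and $B$ determine one another. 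The most delicate part of this plan is the bookkeeping in the second step: one must check that dualizing the $\SP$-invariants in $\F[C_*(\mathcal{PH})]^{\SP}$ recovers the $\SP$-coinvariants that appear in the definition of the assembly map in Section~\ref{assembly}, and that the projection onto $H_1$ correctly encodes the restriction to $\ext H^1_c$; this is formal but fiddly verification rather than a conceptual obstacle.
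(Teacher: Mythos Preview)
Your proposal is correct and follows exactly the same approach as the paper, whose entire proof is the single sentence ``Apply the cohomology functor to the above commutative diagram.'' You have simply unpacked what that sentence means---identifying the dualized horizontal compositions with $A$ and $B$ and noting that the vertical maps become $\psi^*$ and $\hat\psi^*$---so your argument is a faithful elaboration of the paper's one-line proof.
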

\begin{proof}
Apply the cohomology functor to the above commutative diagram.
\end{proof}

\subsection{Cocycles via the Feynman transform}
The assembly map has an easy conceptual explanation using the language of modular operads introduced by Getzler and Kapranov~\cite{GK}, which leads to a construction of assembly map without reference to the vector space $V$. This points the way to a topological interpretation of the assembly map in the case $\cO=\lie$    which we  explore further in \cite{CHKV}.

Whereas a cyclic operad is an operad in which one regards all i/o slots as equivalent, a modular operad is a cyclic operad with an additional grading by \emph{genus}, $\mathcal M\pp{n}=\bigoplus_{g\geq 0}\mathcal{M}\pp{n,g},$ and an i/o slot may be plugged into another i/o slot on the same element. This is formalized by giving {\em contraction operators} $\xi_{i,j}\colon \mathcal{M}\pp{n,g}\to  \mathcal{M}\pp{n-2,g+1}$ that satisfy natural  axioms.
A slight generalization of this is the notion of \emph{twisted modular operads} where the axioms involving the contraction operators contain additional signs which are governed by a \emph{hyperoperad} $\mathfrak D$, see~\cite{GK} for details. A further extension is \emph{differentially graded  modular operads}  where the spaces $\mathcal{M}\pp{n,g}$ are complexes and the differential satisfies additional compatibility axioms, which gives a modular operad structure on the (co)homology.

Given a cyclic operad $\cO$ (or more generally a stable module over the symmetric group), there is natural {\em modularization} of $\cO,$ denoted $\mathbb{M}_{\mathfrak D}\cO$.  A generator of $\mathbb M_{\mathfrak D}\cO$  is similar to a hairy graph except that the hairs are numbered instead of being labeled by elements of a vector space $V$.  In this construction the hyperoperad ${\mathfrak D}$ controls how the orientation data of the underlying graph is used to construct the equivalence relation.

One of the main results in~\cite{GK} is the dual of $\mathbb M_{\mathfrak D}\cO$, which is called the Feynman transform $\mathsf{F}_{\mathfrak{D}}\cO$  of $\cO$, has a natural structure of differential graded twisted modular operad where the signs are determined by a related hyperoperad $\mathfrak{D}^\vee$.  

With a suitable choice of the hyperoperad $\mathfrak{D}$,  the space of connected hairy graphs can be identified with 
$$
\bigoplus_n \left[\mathbb M_{\mathfrak D}\cO\pp{n} \otimes V^{\otimes n} \right]^{\Sym n}
$$ 
and the hairy graph differential is dual to the differential of the  Fyenman transform $\mathsf{F}_{\mathfrak{D}}\cO$. Thus the dual of hairy graph homology can be obtained from the twisted modular operad $\mathsf{F}_{\mathfrak{D}}\cO$, i.e.,
$$
\bigoplus_{g,n} \left[ H_*(\mathsf{F}_{\mathcal{D}}\cO)\pp{g,n}\otimes V^{\otimes n} \right]^{\Sym_n} \cong H^*(\phairy\cO_V).
$$
The modular operad structure gives a composition map from the modularization
$$
\mathbb{M}_{\mathfrak{D}^\vee} H_*(\mathsf{F}_{\mathfrak{D}}\cO)
\to 
H_*(\mathsf{F}_{\mathfrak{D}}\cO)
$$
obtained by using the composition/contraction operations along the edges of the graph. The assembly map described in \S\ref{assembly} can be obtained from this map by attaching labels to the i/o slots of the operad and using the above identification with  $H^*(\phairy\cO_V)$.

\section{Haircuts}
\label{sec:BO}
%
%
%

A graph with no univalent vertices is called a {\em core graph}.  Every finite graph $G$ can be reduced to a core graph $c(G)$ by removing all univalent vertices (and the interiors of the adjacent edges),  then repeating until there are no univalent vertices left.  If $G$ is a union of trees, then $c(G)$ is the empty graph.  The components of $G \setminus c(G)$ are trees, either free-floating or attached at vertices of $c(G)$.  The edges of $G$ are either in one of these trees or in $c(G)$.  Thus the boundary operator on the hairy graph complex $\hairy_V$ is the sum of two  operators $\bdry_T + \bdry_C$, which contract the (internal) tree-edges and the core edges of a hairy graph respectively.  

For many operads, including $\comm, \assoc$ and $\lie$, the part of hairy graph homology involving the boundary operator $\bdry_T$ is concentrated in one dimension, thus significantly simplifying the computation.  This simplification is accomplished by decomposing the hairy $k$-chains   as $$C_k\hairy=\bigoplus_{t+c=k}C^{t,c}(\hairy),$$
 where $C^{t,c}(\hairy)$ is generated by hairy graphs whose core has $c$ vertices, with the remaining $t$ internal vertices in the trees.  This gives a double complex with vertical boundary operator $\bdry_T$ and horizontal boundary operator $\bdry_C$, whose associated total complex is 
 $\hairy$.   For $\comm, \assoc$ and $\lie$, the homology of any column of this double complex vanishes except in  the bottom dimension, so the spectral sequence associated to the vertical filtration collapses to a single row, i.e. we obtain a new chain complex $\overline{\hairy}$ for computing $H_*(\hairy)$ with terms $\overline{\hairy}_k=H_0(C^{*,k}(\hairy))$ and boundary maps induced by $\bdry_C: C^{0,k}(\hairy)\to C^{0,k-1}(\hairy)$.   
  
As we will see in the next section, it is  useful to describe the above process in a different way, by forming  a new operad $\BO$ based on rooted trees, and considering components of a hairy $\cO$ graph which are not based on trees as (non-hairy) $\BO$ graphs based on their cores. For $\comm, \assoc$ and $\lie$  the homology of $\BO$ is zero except in dimension 0 and $H_0(\BO)$ itself has the structure of a cyclic operad.  If we now replace  $\BO$ graphs by  $H_0(\BO)$-graphs, the chain complex computing graph homology is nearly the same as the one we found using the double complex (not exactly the same because we are ignoring the tree components; also, we must allow bivalent vertices in this graph homology, since core graphs may have bivalent vertices).    Here are the details of this approach.  

We first define the operad $\BO$. Each generator  is based on a tree with a distinguished internal vertex.  All internal vertices of the tree are at least trivalent and are $\cO$-colored, the distinguished vertex has $\ell\geq 2$ leaves attached, numbered $0$ to $\ell-1$, and all other leaves  are labeled by elements of $V$.   
Here is a simple example, where the distinguished vertex is colored by $o_1$:  
$$
\begin{minipage}{3.5cm}
\begin{tikzpicture}[thick, scale=.5]
\tikzstyle{operad}=[draw,shape=circle, densely dotted, fill=blue!10];
\node[operad]  (o1) at (0,0) {$o_1$};
\node[operad]  (o2) at (2.5,2.5) {$o_2$};
\node (0) at (90:2.5) {$0$};
\node (1) at (155:2.5) {$1$};
\node (2) at (-45:2.5) {$2$}; 
\node (v0) at (-135:2.5) {$v_0$};
\node (v1) at (2.5,5) {$v_1$};
\node (v2) at (.5,2.5+2) {$v_2$};
\node (v3) at (4.5,2.5+2) {$v_3$};
\begin{scope}[decoration={markings,mark = at position 0.5 with {\arrow{stealth}}}]
\draw[postaction={decorate}] (o1) -- (o2);
\end{scope}
\draw (o1) -- (0)
(o1) -- (1)
(o1) -- (2)
(o1) -- (v0)
(o2) -- (v1)
(o2) -- (v2)
(o2) -- (v3);
\end{tikzpicture}
\end{minipage}
\in \CB_1\cO_V\pp{3}
$$
 The symmetric group $\Sym_n$ acts on   $\BO\pp{n}$ by permuting the labels $\{0,\ldots,n-1\}$.  Operad composition wedges the trees together at their distinguished vertices and uses operad composition in $\cO$  to combine the elements of $\cO$ at those vertices via the i/o slots at the numbered leaves.

Thus a hairy $\cO$-graph based on a graph $X$ with no tree components is equivalent to a (non-hairy!) $\BO$-graph based on the core $c(X)$ (See Figure~\ref{fig:OtoBO}).
\begin{figure}
 \begin{tikzpicture}
\tikzstyle{operad}=[draw,shape=circle, densely dotted, fill=blue!10];
\node[operad]  (o1) at (-1.5,0) {$o_1$};
\draw [dotted] (-2.1,0) ellipse (1.2 cm and .8cm);
\node[operad] (o2) at (1.5,0) {$o_2$};
\node[operad] (o3) at (0,1.2) {$o_3$};
\node[operad] (o4) at (0,2.5) {$o_4$};
\draw [dotted] (0,2.4)  ellipse  (1.1cm and 1.8cm);
\node[operad] (o5) at (3,0) {$o_5$};
\draw [dotted] (2.75,0) ellipse (2cm and 1cm);
\node (v1) at (-.7,3.2) {$v_1$};
\node (v2) at (.7,3.2) {$v_2$};
\node (v3) at (-2.5,.5) {$v_3$};
\node (v4) at (-2.5,-.5) {$v_4$};
\node (v5) at (4,.5) {$v_5$};
\node (v6) at (4,-.5) {$v_6$};
\node (v7) at (.7,1.2+.7) {$v_7$};
 \begin{scope}[decoration={markings,mark = at position 0.5 with {\arrow{stealth}}}]
  (o1) to [out=70, in=190]   (o3);
   \draw[postaction={decorate}]     (o1) to [out=70, in=190]   (o3);
   \draw[postaction={decorate}]  (o3) to [out=-10, in=110]   (o2);
   \draw[postaction={decorate}] (o1) -- (o2);
   \draw[postaction={decorate}] (o1) to [out=-70, in=-110]   (o2);
   \draw[postaction={decorate}]   (o2) -- (o5);
    \draw[postaction={decorate}]   (o3) -- (o4);
 \end{scope} 
\draw 
(o1) -- (v3)
(o1) -- (v4)
(o3) -- (v7)
(o4) -- (v2)
(o5) -- (v5)
(o5) -- (v6)
(o4) -- (v1);
\end{tikzpicture}
\hbox{\hskip 1in}
 \begin{tikzpicture}
\tikzstyle{every node}=[draw,shape=circle, densely dotted, fill=blue!30];
\node   (o1) at (-1.5,0) {$Bo_1$};
\node (o2) at ( 1.5,0) {$Bo_2$};
\node (o3) at (0,1.2) {$Bo_3$};
 \begin{scope}[decoration={markings,mark = at position 0.5 with {\arrow{stealth}}}]
  (o1) to [out=70, in=190]   (o3);
   \draw[postaction={decorate}]     (o1) to [out=70, in=190]   (o3);
   \draw[postaction={decorate}]  (o3) to [out=-10, in=110]   (o2);
   \draw[postaction={decorate}] (o1) -- (o2);
   \draw[postaction={decorate}] (o1) to [out=-70, in=-110]   (o2);
 \end{scope}
\end{tikzpicture} 
\caption{Haircut:  A hairy $\cO$-graph becomes a non-hairy $\BO$-graph}\label{fig:OtoBO}
\end{figure}
\medskip

The operad $\BO$ is a differential graded operad. It has the usual graph homology differential which contracts internal edges   in the underlying tree while composing the operad elements at either end of the edge.  
The $k$-chains are generated by trees with $k+1$ vertices, i.e. $k$ vertices which are not the distinguished vertex.  

We next show that for our usual three operads,  the homology $H_k(\BO)$ is  zero unless $k=0$.  In this case $H_0(\BO)$ inherits  the structure of a cyclic operad from the operad structure on  $\BO$.

 \subsection{$\cO=\comm$}  Recall that  the commutative operad is generated by star graphs, i.e. graphs with one internal vertex and at least three edges, which serve as i/o slots. Thus coloring the vertices of a tree by operad elements adds no extra structure to generators of $\BComm$.
 
\begin{theorem} \label{thm:commutative}   
$H_0(\BComm)=\bigoplus_{\ell\geq 2} H_0(\BComm)\pp{\ell},$ where $H_0(\BComm)\pp{\ell}$ is spanned by commutative spiders with $\ell$ i/o slots and at most one $V$-labeled leg.     $H_k(\BComm)=0$ for all $k>0$.  
\end{theorem}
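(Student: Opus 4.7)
The plan is to decompose $\BComm\pp{\ell} = \bigoplus_S \BComm^S\pp{\ell}$, where $S$ ranges over finite multisets of basis vectors of $V$ and $\BComm^S\pp{\ell}$ is the subcomplex spanned by trees whose $V$-labeled leaves carry labels forming the multiset $S$. Since the commutative operad has trivial composition, the tree-contraction differential preserves $S$, so the claims of the theorem reduce to analogous statements about each summand $\BComm^S\pp{\ell}$.

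I would first dispense with the cases $|S|\leq 1$ via the trivalence hypothesis. Every non-distinguished internal vertex $w$ has no numbered legs, so its valence equals the number of $V$-leaves attached at $w$ plus its number of incident internal edges. Considering a non-distinguished vertex of maximal distance from $v_0$, one sees that it has a single incident internal edge and so must carry at least two $V$-labels to be trivalent. Consequently, for $|S|\leq 1$ there is no room to form a tree with any non-distinguished vertex: $C_k(\BComm^S)\pp{\ell}=0$ for $k\ge 1$, and $C_0(\BComm^S)\pp{\ell}$ is the one-dimensional space spanned by the single star described in the theorem.

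For $|S| = m \ge 2$ the subcomplex $\BComm^S\pp{\ell}$ has nontrivial chains in positive degrees, and the main task is to show that it is fully acyclic. My plan is to build a contracting chain homotopy $h\colon C_k \to C_{k+1}$ satisfying $\partial h + h\partial = \mathrm{id}$. Fix a total ordering on $S$ and let $v^1$ denote its minimum. For a tree $T$, let $w(T)$ be the vertex carrying $v^1$, and define $h(T)$ by splitting $v^1$ off from $w(T)$ along a new internal edge leading to a freshly created trivalent vertex $w'$, which also carries along the next-smallest $V$-label present at $w(T)$ in order to ensure $w'$ has valence three. Contracting the new edge of $h(T)$ in $\partial h(T)$ recovers $T$; the remaining contractions in $\partial h(T)$ should cancel against the terms of $h\partial(T)$ that contract an old edge of $T$. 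In particular, in degree $0$ (where $T$ is the unique star with all $m$ labels at $v_0$) this already produces a two-vertex tree whose boundary is $T$, confirming that $H_0(\BComm^S)\pp{\ell}=0$ for $m \ge 2$.

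The main obstacle is verifying the homotopy identity with the correct signs in the degenerate cases: when $w(T)$ holds only $v^1$ so that no next-smallest label at that vertex is available, when the edge being contracted in $\partial$ is itself incident to $w(T)$ or to $w'$, and when contraction and extraction produce the same underlying tree by different combinatorial routes. If this direct homotopy becomes too awkward to engineer, an alternative strategy is to identify $\BComm^S\pp{\ell}$ with the augmented simplicial chain complex of a contractible poset of tree dissections on $\ell+m$ marked points, from which acyclicity follows formally. Combining the filtration pieces in either approach then yields both assertions of the theorem.
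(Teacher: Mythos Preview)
Your decomposition by the multiset $S$ of $V$-labels matches the paper's (the paper phrases it as grouping by the star $T_0$ to which a tree collapses), and your treatment of $|S|\le 1$ is correct.

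Your primary approach for $|S|\ge 2$, the explicit contracting homotopy, has the gap you flag and it is a real one: when $w(T)$ carries only $v^1$ --- for instance a trivalent non-distinguished vertex with two internal edges and the single leaf $v^1$ --- your $h(T)$ is undefined, and there is no evident local repair that makes $\partial h + h\partial = \mathrm{id}$ hold globally. The paper does not attempt this route.

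Your fallback \emph{is} the paper's argument, though the paper supplies the concrete contraction you leave unspecified. The subcomplex for a given $S$ with $|S|\ge 2$ is the augmented chain complex of the poset of trees collapsing to the star $T_0$, ordered by edge collapse. Since $\ell\ge 2$ and $|S|\ge 2$, the two-vertex tree $T_1$ whose single internal edge $e$ separates all numbered leaves from all $V$-leaves lies in this poset. For any $T$ one can insert $e$ by blowing up the distinguished vertex, and the maps $T\mapsto T\cup e\mapsto T_1$ are poset maps contracting the poset to the point $T_1$. This gives acyclicity immediately and sidesteps all the degenerate cases of the direct homotopy.
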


\begin{proof} We first compute $H_0(\BComm)$.  
The $0$-chains $C_0(\BComm)$ are generated by star graphs, some of whose leaves are labeled by $\{0,\ldots,\ell-1\}$ for some $\ell\geq 2$ and the rest by elements of $V$. If $\ell=2$ then there must be at least one $V$-labeled leaf.    Any generator with two or more   $V$-labeled leaves is in the image of the boundary map.  There are no further relations, so generators with at most one $V$-labeled leaf form a basis for $H_0(\BComm)$  (see Figure~\ref{fig:comm}) and $H_0(\BComm)$ decomposes as a direct sum according to the number $\ell$ of numbered leaves, as claimed.\begin{figure}
 \begin{tikzpicture}[scale=.6]
\node (0) at (-2,0) {$0$};
\node (1) at (-1,0) {$1$}; 
\node (2) at (0,0) {$2$}; 
\node (dots) at (1,0) {$\ldots$};
\node (n-1) at (3,0) {$\ell-1$}; 
  \draw
 (.5,2) to (0)
 (.5,2) to (1)
  (.5,2) to (2)
   (.5,2) to (n-1);
\end{tikzpicture}
\hbox{\hskip 1in}
\begin{tikzpicture}[scale=.6]
\node (0) at (-2,0) {$0$};
\node (1) at (-1,0) {$1$}; 
\node (2) at (0,0) {$2$}; 
\node (dots) at (1,0) {$\ldots$};
\node (n-1) at (3,0) {$\ell-1$}; 
\node (v) at (.5,3) {$v$};
  \draw
 (.5,2) to (0)
 (.5,2) to (1)
  (.5,2) to (2)
   (.5,2) to (n-1)
   (.5,2) to (v);
\end{tikzpicture}
\caption{Generators of $H_0(\BComm)\pp{\ell}$}\label{fig:comm}
\end{figure}
 
To compute the higher homology groups, consider a generator $T$ of $C_k(\BComm)$ for $k\geq 1$.  Collapsing all of the internal edges of $T$ yields a  star graph $T_0$ with edges labeled $0,\ldots,\ell-1$  and $v_1,\ldots,v_s$; note that  $s\geq 1.$   Consider the subcomplex $C(T_0)$ of $C_*(\BComm)$ spanned by trees which collapse to $T_0$.  This is the augmented chain complex of the geometric realization of the partially ordered set of such trees, where the poset relation is forest collapse. (The augmentation is provided by the span of $T_0$).  Let  $T_1\in C_1(\BComm)$ be the tree with one internal edge $e$ separating the numbered leaves from the $V$-labeled leaves.  Then $e$ can be added to any other tree in the poset, i.e. the maps $T\to T\cup e\to T_1$ are poset maps contracting the poset to the single point $T_1$.  Thus the geometric realization of the poset is contractible, so its augmented chain complex   $C(T_0)$ is acyclic.  Hence $C(T_0)$ contributes nothing to $H_*(\BComm)$. Since every generator of $C_k(\BComm)$ for $k\geq 1$ is in some $C(T_0)$,   this shows  that $H_k(\BComm)=0$ for all $k\geq 1$.  
\end{proof}

The operad structure on  $H_0(\BComm)\pp{\ell}$  induced by the operad structure on $\BComm$ has the property that the composition of any two generators which each have a $V$-labeled leg is zero.  If $\ell=2$, this means that the composition of any two generators is zero, since all generators have a $V$-labeled leg.

 \subsection{$\cO=\assoc$}  Recall that the generators of the associative operad are {\em planar} star graphs, so the generators of $\BComm$ are endowed with a cyclic ordering at each internal vertex.  
 
 \begin{theorem} \label{thm:associative} $H_0(\BComm)\iso \bigoplus_{\ell\geq 2} H_0(\BComm)\pp{\ell}$, where $H_0(\BComm)\pp{\ell}$
 is spanned by associative spiders with $\ell$ i/o slots, with at most one $V$-labeled leg between adjacent slots. For all $k>1$,
$H_k(\BComm)=0$.  
\end{theorem}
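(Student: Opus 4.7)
The plan is to adapt the commutative case (Theorem~\ref{thm:commutative}) to the planar setting. The essential new feature is that the cyclic order of legs imposed by the associative operad prevents us from pushing all $V$-labeled legs onto a single subtree as was done for $\comm$; instead we must work sector by sector.

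For a $0$-chain (spider) $T_0$ with $\ell$ numbered i/o slots, the $V$-labeled legs are distributed among the $\ell$ cyclically arranged \emph{sectors} defined by consecutive numbered slots. Let $C(T_0)\subset C_*(\BAssoc)$ denote the subcomplex spanned by trees that collapse to $T_0$; then $C_*(\BAssoc)=\bigoplus_{T_0}C(T_0)$, so it suffices to compute $H_*(C(T_0))$ for each $T_0$. Planarity forces any non-distinguished internal vertex of a tree $T\in C(T_0)$ to group only a consecutive block of $V$-legs belonging to a single sector, since the numbered legs must remain on the distinguished vertex and no consecutive block in the cyclic order can cross a numbered slot. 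This yields a tensor product decomposition
\[
C(T_0)\;\cong\;\bigotimes_{i=1}^{\ell}C_i,
\]
where $C_i$ is the subcomplex assigned to the $i$-th sector.

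If a sector contains at most one $V$-leg, no non-trivial subtree can be formed without creating a bivalent vertex, so $C_i=\F$ concentrated in degree $0$. The heart of the argument is to show that if a sector contains $s_i\geq 2$ $V$-legs, then $C_i$ is acyclic. Such a $C_i$ is canonically identified with the chain complex on the poset of planar bracketings of the sequence $v_1,\dots,v_{s_i}$, in which each bracket encloses a consecutive sub-sequence of length $\geq 2$ and the ``outer'' bracket containing the entire sequence is allowed. I would exhibit an explicit chain contraction $h\colon C_i^{k}\to C_i^{k+1}$ that adjoins the outer bracket (with appropriate sign) when it is not already present, and is zero otherwise; a direct calculation of $\partial h+h\partial$ then yields the identity. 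Equivalently, $C_i$ may be identified with the mapping cone of the identity on the cellular cochain complex of the associahedron $K_{s_i}$, and is therefore acyclic because the identity is a quasi-isomorphism.

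By the K\"unneth formula, $C(T_0)$ is acyclic whenever some sector of $T_0$ contains at least two $V$-legs. Hence only the ``good'' $T_0$---those with at most one $V$-leg between adjacent numbered slots---contribute to homology, each providing a single generator in degree $0$. This simultaneously identifies $H_0(\BAssoc)$ with the claimed module and yields vanishing of $H_k(\BAssoc)$ in all positive degrees. The main obstacle I expect is the sign verification for the chain contraction in the bracketing poset: the combinatorics of the outer-bracket operator are clean, but one must carefully track signs arising from edge orientations and from reordering non-distinguished vertices in order to confirm $\partial h+h\partial=\id$.
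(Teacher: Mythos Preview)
Your approach is correct and reaches the same conclusion, but it is organized differently from the paper's proof. The paper argues by direct analogy with the commutative case: for each planar star $T_0$ it exhibits a single cone point $T_1$ for the poset $\{T : T \text{ collapses to } T_0\}$, namely the planar tree in which every maximal run of two or more adjacent $V$-labeled leaves is separated from the distinguished vertex by a single edge. The poset maps $T \mapsto T \cup \{\text{these edges}\} \to T_1$ contract the poset to $T_1$, so $C(T_0)$ is acyclic whenever some sector carries at least two $V$-legs, and the $H_0$ description follows. Your argument instead makes the sector structure explicit via the tensor decomposition $C(T_0) \cong \bigotimes_i C_i$ and invokes K\"unneth, treating each $C_i$ separately by the outer-bracket contraction (equivalently, the mapping-cone identification). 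The two arguments coincide at heart: your outer bracket in sector $i$ is exactly the edge the paper adjoins in that sector, and the tensor product of your per-sector homotopies is the paper's global retraction onto $T_1$. What your route buys is a clean structural picture and an explicit link to associahedra; what the paper's buys is brevity, since the whole higher-homology argument reduces to one sentence and a figure by reference to the $\comm$ case. Your caution about signs is justified but routine: the vertex-ordering and edge-orientation data split over the sectors, so the Koszul signs in $\bigotimes_i C_i$ match the orientation signs in $C(T_0)$, and the outer-bracket homotopy then verifies $\partial h + h\partial = \id$ for the standard ``last edge'' sign convention.
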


\begin{proof} We first compute $H_0(\BComm)$.  The $0$-chains are generated by planar stars, some of whose leaves are labeled by $\{0,\ldots,\ell-1\}$ for some $\ell\geq 2$ and the rest by elements of $V$.  Any generator with two or more  {\em adjacent} $V$-labeled leaves is in the image of the boundary map, and there are no further relations, so these form a basis for $H_0(\BComm).$   We can express this algebraically by artificially adding   $1\in \F$ between i/o slots if there is no $V$-labeled leaf there, giving the formula in the statement of the theorem.
\begin{figure}
  \begin{tikzpicture}[scale=.6]
\node [draw,shape=circle] (origin) at (0,0) {};
\node (0) at (0:2) {$2$};
\node (1) at (60:2) {$0$}; 
\node (2) at (120:2) {$v_1$}; 
\node (3) at (180:2) {$1$};
\node (4) at (240:2) {$3$};
\node (5) at (300:2) {$v_2$}; 
  \draw
 (0,0) to (0)
 (0,0) to (1)
  (0,0) to (2)
(0,0) to (3)
(0,0) to (4)
(0,0) to (5);
\end{tikzpicture}
\hbox{\hskip .5in}
 \begin{tikzpicture}[scale=.6]
\node [draw,shape=circle] (origin) at (0,0) {};
\node (0) at (0:2) {$0$};
\node (1) at (90:2) {$1$}; 
\node (2) at (180:2) {$2$};
\node (3) at (270:2) {$3$};
  \draw
 (0,0) to (0)
 (0,0) to (1)
  (0,0) to (2)
   (0,0) to (3);
\end{tikzpicture}
\caption{Typical  generators of  $H_0(\BComm)\pp{4}$}\label{fig:assoc}
\end{figure}

For $k\geq 1$, the homology groups $H_k(\BComm)$ vanish by an argument similar to the $\cO=\comm$ case.  Here the role of $T_1$ is played by the planar tree in which every adjacent group of at least two $V$-labeled leaves is  separated from the base vertex by a single edge (see Figure~\ref{Tone}).

\begin{figure}
 \begin{tikzpicture}[scale=.6]
\node (0) [above right] at (45:2){$0$};
\node (1) [below right] at (-45:2) {$1$};
\node (2) [below] at (-90:2) {$2$};
\node (3) [above left] at (135:2) {$3$};
\node (4) [left] at (180:2) {$4$};
\node (v0) [above] at (0,2) {$v_0$};
\node (v1) [above right] at (3,1) {$v_1$};
\node (v2) [right] at (3,0) {$v_2$};
\node (v3) [below right] at (3,-1) {$v_3$};
 \node (v4) [left] at (-2,-1) {$v_4$};
 \node (v5) [below] at (-1,-2) {$v_5$};
\draw (4) -- (v2);
\draw (2) -- (v0);
\draw (1) -- (3);
\draw (0) -- (-1,-1)-- (v4);
\draw (v1) -- (2,0) -- (v3);
\draw (v5) -- (-1,-1);
\draw (0,0) circle (.3);
\draw (-1,-1) circle (.25);
\draw (2,0) circle (.25);
\end{tikzpicture}
\caption{Figure for proof of Theorem~\ref{thm:associative}}\label{Tone}
\end{figure}
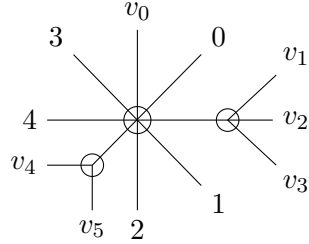 

\end{proof}
 
 The operad composition laws on $H_0(\BComm)$  are induced from the operad composition in $\BComm$. If two $V$-labeled hairs become adjacent after composition, the result is zero.

 \subsection{$\cO =\lie$}  Recall that the generators of $\cO$ are planar trivalent trees, modulo IHX (Jacobi) and AS (antisymmetry) relations.  These relations make this case more complicated than the previous two, and we break it up into smaller pieces. 
 
 \begin{proposition} \label{thm:HoneLie}
$H_0(\BLie)=\bigoplus_{\ell\geq 2} H_0(\BLie)\pp{\ell}$, where $H_0(\BLie)\pp{\ell}$ is spanned by Lie spiders with $\ell$ legs, each of which may have  commuting $V$-labeled hairs attached on the positive side, modulo the relation that the sum of spiders obtained by adding a hair  labeled $v$ to each leg is zero.  
\end{proposition}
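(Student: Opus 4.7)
The plan is to follow the strategy used for $\comm$ and $\assoc$: present each $0$-chain in a canonical form where the $V$-labeled leaves are single hairs attached to the spider body, then push those hairs onto the legs using IHX, and finally identify the additional relation coming from IHX at the spider body.

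The decomposition $H_0(\BLie) = \bigoplus_{\ell \geq 2} H_0(\BLie)\pp{\ell}$ is immediate because the differential does not change the number of numbered leaves. A $0$-chain is a single $\lie$-colored vertex, i.e., an element of $\lie\pp{\ell + s}$ (a planar trivalent tree modulo IHX and AS) with $\ell$ numbered and $s$ $V$-labeled leaves. A $1$-chain has a non-distinguished vertex $v_1$ carrying $s_2 \geq 2$ $V$-leaves, and its boundary is a composition $\alpha \circ_i \beta$ with $\beta \in \lie\pp{s_2 + 1}$ containing all the $V$-leaves on that side. Hence any generator whose underlying trivalent tree contains a subtree with $\geq 2$ $V$-labels and no numbered leaves is killed in $H_0$. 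Iterating, every $0$-chain is equivalent modulo $\partial C_1$ to one in which each $V$-labeled leaf is a single hair attached to a trivalent vertex on the minimal subtree $T'$ spanning the numbered leaves.

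To push hairs onto the $\ell$ legs, I would apply IHX at any interior vertex of $T'$ carrying a hair: this expresses the configuration as a signed sum of configurations in which the hair has moved one step closer to a numbered leaf. Iterating brings every hair onto a leg, and AS lets us place each hair on the designated positive side. Commutativity of two hairs $v_1, v_2$ on the same leg then follows from the earlier reduction: swapping them changes the tree by a subtree $[v_1, v_2]$ containing two $V$-leaves and no numbered leaves, which is a boundary.

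Finally, the summation relation $\sum_i (\text{add hair }v\text{ to leg }i)\, T = 0$ arises as the discrepancy between two different IHX-pushing routes for a single hair through the spider body; equivalently, it is the derivation identity $[v, T] = \sum_i T \circ_i \mathrm{ad}_v(x_i)$ read at the level of canonical forms. The main obstacle I anticipate is verifying that this is the \emph{only} additional relation on canonical forms, so that the generators and relations described in the proposition exactly capture $H_0(\BLie)\pp{\ell}$; this will require a careful accounting of which linear combinations of canonical forms are annihilated by every IHX route. A secondary subtlety running throughout is keeping track of signs from AS and from the planar orientations of trivalent vertices, which determine the precise meaning of \emph{positive side}.
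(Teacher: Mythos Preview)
Your approach is essentially the same as the paper's: kill configurations with two adjacent $V$-leaves via $\partial C_1$, use IHX to slide hairs off internal edges onto the numbered legs, use AS to fix the side, show adjacent hairs commute (the paper does this via a $1$-chain whose boundary, after one IHX, gives the two orderings---exactly your ``$[v_1,v_2]$ is a boundary'' argument), and read off the summation relation from IHX at the body. The obstacle you flag---proving that the summation relation is the \emph{only} extra relation among canonical forms---is not addressed in the paper's proof either; it asserts ``IHX relations force the final relation'' and illustrates it for $\ell=4$, so your caution is well placed but you are not missing anything the paper supplies.
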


\begin{proof} The $0$-chains are generated by trivalent planar trees,  with  $\ell$ leaves labeled $\{0,\ldots,\ell-1\}$ for some $\ell$ and the rest of the leaves labeled by elements of $V$. If   two edges coming from a common vertex end in $V$-labeled leaves, then the tree is in the image of the boundary map and we may disregard it.  Thus each generator has the form of a planar trivalent tree $T_0$ on $\{0,\ldots,\ell-1\}$ with single-edge ``hairs" attached along the edges, labeled by elements of $V$:
 
\begin{center} \begin{tikzpicture}[scale=.6]
\node (0) at (-2,1) {$0$};
\node (1) at (-2,-1) {$1$};
 \node (2) at (4,1) {$2$};
 \node (3) at (4,-1) {$3$};  
\draw (0) -- (0,0) -- (2,0) -- (2);
\draw (3) -- (2,0);
\draw (0,0) -- (1);
\node (v1) at (1,-1) {$v_1$};
\draw (1,0) to (v1);
\node (v2) at (-.5,1.5) {$v_2$};
\draw (-1,.5) to (v2);
\node (v3) at (2,1) {$v_3$};
\node (v4) at (3,1.5) {$v_4$};
\draw (2.6,.3) to (v3);
\draw (3.2,.6) to (v4);
\end{tikzpicture}
\end{center}
 
Such trees do not form a basis for the $0$-chains, but must be considered modulo IHX and AS relations.   
Using IHX relations we may express each generator as a sum of trees with no hairs on the internal edges.    Using AS relations we may flip all the hairs to the same (positive) side of their edges.  Another application of the IHX relation shows that the order of adjacent hairs may be interchanged (see Figure~\ref{fig:HairsCommute}; the second equality is due to the IHX relation).  Thus $H_0(\BLie)\pp{\ell}$ is generated by planar trivalent hairy $\lie$ trees $T_0$, leaves labeled by $\{0,\ldots,\ell-1\}$ with a monomial in $S(V)$ associated to each leaf.  IHX relations force the final relation among the generators.  This is illustrated in Figure~\ref{fig:Xtra}  for a $T_0$ with $4$ leaves, where two applications of IHX are required to move the hair labeled $v$ around, and the AS relation is used at times to flip the hair to the side indicated.

\begin{figure}
 \begin{tikzpicture}[scale=.4]
\node (0) at (-2,2) {$0$};
\node (1) at (-2,-2) {$1$};
 \node (2) at (3,2) {$2$};
 \node (3) at (3,-2) {$3$};  
\draw (0) -- (0,0) -- (1,0) -- (2);
\draw (3) -- (1,0);
\draw (0,0) -- (1);
\node (v) at (.5,2) {$v$};
\draw (-.5,.5) to (v);
\node (plus) at (4,0) {$+$};
\end{tikzpicture} 
 \begin{tikzpicture}[scale=.4]
 \node (0) at (-2,2) {$0$};
\node (1) at (-2,-2) {$1$};
 \node (2) at (3,2) {$2$};
 \node (3) at (3,-2) {$3$};  
\draw (0) -- (0,0) -- (1,0) -- (2);
\draw (3) -- (1,0);
\draw (0,0) -- (1);
\node (v) at (3,-.5) {$v$};
\draw (1.5,.5) to (v);
\node (plus) at (4,0) {$+$};
\end{tikzpicture}
\begin{tikzpicture}[scale=.4]
 \node (0) at (-2,2) {$0$};
\node (1) at (-2,-2) {$1$};
 \node (2) at (3,2) {$2$};
 \node (3) at (3,-2) {$3$};  
\draw (0) -- (0,0) -- (1,0) -- (2);
\draw (3) -- (1,0);
\draw (0,0) -- (1);
\node (v) at (.5,-2) {$v$};
\draw (1.5,-.5) to (v);
\node (plus) at (4,0) {$+$};
\end{tikzpicture}
 \begin{tikzpicture}[scale=.4]
  \node (0) at (-2,2) {$0$};
\node (1) at (-2,-2) {$1$};
 \node (2) at (3,2) {$2$};
 \node (3) at (3,-2) {$3$};  
\draw (0) -- (0,0) -- (1,0) -- (2);
\draw (3) -- (1,0);
\draw (0,0) -- (1);
\node (v) at (-2,.5) {$v$};
\draw (-.5,-.5) to (v);
\node (plus) at (4,0) {$=0$};
\end{tikzpicture}
\caption{Relation among generators of $H_0(\BLie)\pp{4}$}\label{fig:Xtra}
\end{figure}
\end{proof}

\begin{theorem} 
$H_k(\BLie)=0$ for $k>0$.
\end{theorem}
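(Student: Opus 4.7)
The plan is to follow the strategy used in Theorems~\ref{thm:commutative} and~\ref{thm:associative}: show that for each generator $T_0$ of $C_0(\BLie)$, the subcomplex $C(T_0)\subset C_*(\BLie)$ spanned by trees collapsing to $T_0$ under full edge contraction is acyclic in positive degrees. The new feature compared with the $\comm$ and $\assoc$ cases is that $\lie$-colorings are themselves planar trivalent trees modulo IHX and AS, so extra bookkeeping about these relations is needed.

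First I would describe $C(T_0)$ combinatorially by fully expanding each $\lie$-colored vertex into its underlying planar trivalent tree. A generator of $C(T_0)$ then corresponds to a pair $(\sigma,I)$, where $\sigma$ is a fixed planar trivalent tree (modulo IHX and AS) with $\ell$ numbered leaves and $V$-labeled hairs representing $T_0$, and $I$ is a subset of internal edges of $\sigma$ encoding the cuts between the pieces that realize the $\lie$-coloring at each internal vertex of the outer tree. Since all numbered legs attach to the distinguished vertex $v_0$, the allowed $I$'s are exactly the subsets of $E_{\text{int}}(\sigma)\setminus E(\sigma^{\text{num}})$, where $\sigma^{\text{num}}$ is the minimal subtree of $\sigma$ spanning the numbered leaves. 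Under this dictionary, the boundary $\bdry_C$ becomes the simplicial differential $\bdry(\sigma,I)=\sum_{e\in I}\pm(\sigma,I\setminus\{e\})$.

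With this identification, the portion of $C(T_0)$ sitting over a fixed underlying $\sigma$ is the augmented simplicial chain complex of the full simplex on the set $E_{\text{int}}(\sigma)\setminus E(\sigma^{\text{num}})$ of cuttable edges, which is acyclic. Concretely, as in the $\comm$ and $\assoc$ proofs, the contracting homotopy is given by a poset map $I\mapsto I\cup\{e_0\}$ for a fixed cuttable edge $e_0$, playing the role of the edge $e$ in the proof of Theorem~\ref{thm:commutative}.

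The main obstacle will be showing that this analysis descends compatibly to the quotient by IHX and AS. The key observation, implicit in the definition of $\BLie$, is that IHX and AS are imposed entirely within a single $\lie$-coloring at an internal vertex of the outer tree, and hence entirely within a single piece of $\sigma\setminus I$; in particular, these relations never involve an edge of $I$. So the cut structure on $\sigma$ and the induced simplicial structure are preserved by IHX and AS. To finish, one can work at the level of the pre-complex before imposing these relations, observe that it splits as a direct sum of acyclic augmented simplex complexes indexed by $\sigma$, and verify that the IHX/AS relation subcomplex is itself a direct sum of acyclic complexes of the same form, so that the quotient $C_*(\BLie)$ is acyclic in positive degrees as desired.
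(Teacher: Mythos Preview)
Your reduction of the pre-IHX complex to a direct sum over trivalent trees $\sigma$ of augmented simplex complexes on the cuttable edges is correct, and this does show that the pre-complex $\tilde C$ has vanishing homology in positive degrees. (In the paper's language this is exactly the row $q=0$ of the double complex, analysed via the subcomplexes $C_T$.) The gap is in your final paragraph: the assertion that the IHX/AS relation subcomplex $R\subset\tilde C$ is ``itself a direct sum of acyclic complexes of the same form'' is not justified, and in fact fails. For a fixed IHX move at an edge $e$ the relations $r_{e,I}$ with $I$ ranging over cuttable subsets avoiding $e$ do form an acyclic simplex complex, but $R$ is only the \emph{sum} of these subcomplexes, not their direct sum: IHX relations at different edges satisfy non-trivial syzygies (coming from trees with a $5$-valent or higher vertex), so the per-move subcomplexes are not linearly independent inside $\tilde C$. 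Showing that $R$ is nevertheless acyclic in positive degrees is essentially equivalent to the statement you are trying to prove, so the argument as written is circular.

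The paper handles exactly this difficulty by enlarging the picture: instead of staying with trivalent $\sigma$ and trying to control the IHX quotient directly, it allows arbitrary trees (vertices of any valence) and introduces a second grading $q$ measuring how far the tree is from trivalent. The vertical differential $\partial_v$ blows vertices up into edges, and the fact that the columns are acyclic above the bottom row (posets of trees on a fixed leaf set are spherical, \cite{V}) is precisely what resolves the IHX syzygies. The spectral sequence of the horizontal filtration then reduces the problem to a single column indexed by trees $T$ with $T'=T_0'$, and the paper finishes with a separate inductive argument showing the corresponding poset $\mathfrak T_L$ is spherical. So the extra vertical direction is not optional bookkeeping: it is exactly the missing ingredient that your approach lacks.
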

\begin{proof}The $k$-chains $C_k(\BLie)$ are spanned by labeled  $\lie$-trees, i.e. by  edge-oriented trees $T$ with $k+1$ $\lie$-decorated internal vertices, including a distinguished vertex with some of its leaves labeled $\{0,\ldots,\ell-1\}$ for some $\ell\geq 2$.  All other leaves of $T$ are labeled by elements of $V$.  The operad elements decorating the vertices are trivalent planar trees $\tau_1,\ldots,\tau_k$ modulo $AS$ and $IHX$ relations.  

Recall from  (\cite{CV}, Section 3.1) that the orientation data of any  $\lie$-graph can be simplified by replacing it with an ordering of the internal vertices of the forest $\Phi=\{\tau_1,\ldots,\tau_k\}$. A generator of $C_k(\BLie)$ then becomes a  {\em rooted forested tree}, i.e. a labeled trivalent tree $T$  together with an edge-ordered forest $\Phi,$ modulo IHX relations on the edges of $\Phi$.  The forest $\Phi$ must contain all the internal vertices of $T$, and  all of the leaves labeled by numbers must be attached to a single component of $\Phi$   (see Figure~\ref{forestedTree}).  
 \begin{figure}
 \begin{tikzpicture}[scale=.7]
\node (0) [left] at (-2,-1){$0$};
\node (1) [right] at (2,-1) {$1$};
\node (2) [below] at (-1,-2) {$2$};
\node (v1) [below] at (1,-2) {$v_1$};
\node (v2) [right] at (2,2) {$v_2$};
\node (v3) [above] at (1,3) {$v_3$};
 \node (v4) [above] at (-1,3) {$v_4$};
 \node (v5) [left] at (-2,2) {$v_5$};
 \node (e1) at (.3,1.8) {$e_1$};
  \node (e2) at (.8,-.2) {$e_2$};
   \node (e3) at (-.8,-.2) {$e_3$};
\draw (v5) -- (-1,2) -- (0,1) -- (1,2) -- (v2);
\draw (0) -- (-1,-1) -- (0,0) -- (1,-1) -- (1);
\draw (0,0) -- (0,1);
\draw (-1,2) -- (v4);
\draw (1,2) -- (v3);
\draw (-1,-1) -- (2);
\draw (1,-1) -- (v1);
\draw [line width=.7mm] (0,1) -- (1,2);
\draw [line width=.7mm] (-1,-1) -- (0,0) -- (1,-1);
\draw [fill=black] (-1,2) circle (.10);
\end{tikzpicture}
 \caption{A rooted forested tree}\label{forestedTree}
 \end{figure}
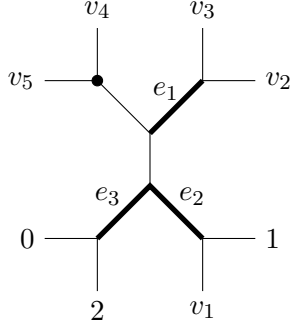
 The edge-orientation of the original $\lie$-tree, its planar embedding and the AS relation have been subsumed by the edge-ordering of $\Phi$.  The boundary map becomes $\bdry(T,\Phi)=\sum_e (T, \Phi\cup e)$, where the sum runs over all internal edges of $T$.  Note that adding an edge to $\Phi$ reduces the number of forest components by $1$ since $\Phi$ includes all internal vertices of $T$.  

Since the boundary map has no effect on the labels of $T$   we may compute the homology separately for each set of boundary labels.  If we restrict to $V$-labels which are basis elements, then the homology is a direct sum over all of the possible label sets. 
 Fix such a label set $L=\{0,\ldots,\ell-1,v_1,\ldots v_s\}$ with $|L|\geq 3$ elements, where $\ell\geq 2$ and $s\geq 0$.  A $\lie$ tree then has $d=|L|-2$ internal vertices. We may assume that all labels are distinct. Otherwise we are considering the complex of coinvariants under the automorphism group of the label set, and if the homology of the complex with distinct labels vanishes, so does the homology of the complex of coinvariants under any finite group action, since we are taking homology with trivial coefficients in a field of characteristic $0$.

The IHX relation can be encoded using the observation that there are three ways of expanding a 4-valent vertex into an edge and two trivalent vertices.  The fact that we must consider trees modulo  IHX relations thus leads us to consider more general rooted, forested trees {\em without} IHX equivalence,  which we arrange into a double complex as follows.

Define $C_{p,q}$ to be the vector space spanned by forested trees $(T,\Phi)$ such that $\Phi=\{\tau_1,\ldots,\tau_p\}$ has $p$ subtrees (including isolated vertices) and $T$ has $d-q+1$ internal vertices.  The $C_{p,q}$ form a double complex whose horizontal boundary operator $\bdry_h\colon C_{p,q}\to C_{p-1,q}$ sums over adding an edge to $\Phi$ in all possible ways, and whose vertical  boundary operator $\bdry_v\colon C_{p,q}\to C_{p,q-1}$ sums over expanding a vertex into an edge in all possible ways.  In both cases the new forest edge is put last in the ordering.

$$
\xymatrix{
C_{1,d}\ar[d]_{\bdry_v}\\
C_{1,d-1}\ar[d] &C_{2,d-1} \ar[l]_{\bdry_h}\ar[d]\\
C_{1,d-2}\ar[d]&C_{2,d-2}\ar[d] \ar[l]&C_{3,d-2}\ar[l]\ar[d]   \\
\vdots\ar[d]&\vdots\ar[d] & \vdots\ar[d]\\
C_{1,1}\ar[d]&C_{2,1} \ar[l]\ar[d]&C_{3,1} \ar[l]\ar[d]&\ldots\ar[l]&C_{d-\ell+1,1}\ar[l]\ar[d] \\
C_{1,0}&C_{2,0} \ar[l]&C_{3,0} \ar[l]&\ldots\ar[l]&C_{d-\ell+1,0}\ar[l]&C_{d-\ell+2,0} \ar[l]\\
}
$$
Thus $C_{1,d}$ is one-dimensional, spanned by $(S,v)$ where $S$ is the the star graph  and $v$ is the  forest consisting of its single internal vertex.  The bottom row is spanned by pairs $(T,\Phi)$ with $T$ trivalent, and the second row by pairs $(T,\Phi)$ such that $T$ has precisely one 4-valent vertex.  The left-hand column is spanned by pairs $(T,T^\prime)$ where $T^\prime$ consists of all internal edges of $T$ (i.e. $T'$ is obtained by pruning all the leaves from $T$).  

We consider the two spectral sequences associated to the vertical and horizontal filtrations of this double complex.  For the vertical filtration, the bottom row of the $E_1$-page is forested graphs modulo $IHX$, i.e. it is precisely the chain complex $C_*(\BLie)$ with the Lie boundary operator $\bdry_\lie$.   Furthermore, we claim that all other rows of the $E_1$ page are zero, so this spectral sequences converges to   $H_*(\BLie)$.    

Note that the top term of each column is spanned by rooted forested trees where each forest is a single vertex.  The terms lower in the column are formed by blowing up these vertices (separately) into trees.  Thus the columns are the augmented cochain complexes of  simplicial joins of (geometric realization of) posets of trees on a fixed set of leaves.  These posets are spherical (\cite{V}, Theorem 2.4) so have (co)homology only in the top dimension, corresponding to trivalent trees.  With our notation the trivalent trees are at the bottom of the column, which means that the homology of each column vanishes except in the lowest dimension. 

Next we consider the horizontal filtration.  Each row breaks up into a sum of subcomplexes $C_T$ according to the tree $T$ in the pair $(T,\Phi)$.    Consider the subtree $T_0$ of $T$ spanned by all the numbered leaves and its pruned  subtree $T_0^\prime=T_0\cap T'$.  If $T_0^\prime=T^\prime$  then this is the only forest  which makes $(T,T^\prime)$ into a legitimate rooted forested graph, since removing any edge from $T^\prime$  would violate the requirement that all numbered leaves be attached to the same component of $\Phi$.   Thus if $T_0^\prime=T^\prime$  then $C_T$ is one-dimensional, spanned by $(T,T'),$ which is in the left-hand column.     If $T_0^\prime$ is {\em not} equal to $T^\prime$,  then adding any set of edges of $T^\prime-T_0^\prime$ to $T_0^\prime$ gives a rooted forested tree, so  $C_T$ is the augmented chain complex of the simplex spanned by the edges of $T^\prime-T^\prime_0;$ in particular it is acyclic.  Thus the $E^1$ page of the spectral sequence is zero except for the left-hand column, which is spanned by pairs $(T,T^\prime)$ with $T^\prime=T_0^\prime$, and the vertical boundary operator blows up vertices into edges in all ways which preserve the condition $T^\prime=T_0^\prime$.  The left-hand column on the $E^1$-page is therefore  the augmented cochain complex of the poset $\mathfrak T_L$ of trees labeled by $L$ which satisfy $T^\prime=T_0^\prime$, and it suffices to show that the geometric realization of $\mathfrak T_L$ is spherical of dimension $d$.

 To prove this we use the standard translation of trees into compatible sets of partitions of the leaves (see, e.g. \cite{V}).   Each partition corresponds to an internal edge of the tree, and each side of a partition must have at least two elements.   The condition $T^\prime=T_0^\prime$ is equivalent to the condition that every edge of $T^\prime$ must separate two numbered leaves from each other, i.e. each partition has numbered leaves on both sides; call such a partition {\it OK}.  The complex $\mathfrak T_L$ is the barycentric subdivision of the complex of OK partitions, which by abuse of notation we also denote by $\mathfrak T_L$.

 We prove the theorem by  induction on the number of $V$-labels in $L$.  If there are no $V$-labeled leaves, then all partitions are OK and we are simply considering the poset of trees on $\ell$ leaves, which we know is spherical of dimension $d$.  If $L$ contains  a $V$-label $a$, let $A$ be the partition with one side equal to $\{0,a\}$; this partition is OK since it separates $0$ and $1$.   The subcomplex of $\mathfrak T_L$  spanned by partitions compatible with $A$ is a cone $c_A$ with vertex $A$, hence contractible.   We build the entire realization by adding the remaining partitions one at a time to this cone.  We add them in order of increasing {\em size}, where we measure the size by counting the number of leaves on the same side as $0$ (and hence the opposite side from $a,$ since they are not compatible with $A$). 
 
 We will abuse notation by writing  $X$ to mean the partition $\{X,L-X\}$, so $A= \{0,a\}.$ 
  If $B$ is not compatible with $A$ and has size $2$, then $B=\{0,x\}$ for some $x\neq a$.  Every partition in $lk(B)\cap c_A$  is compatible with $\{0,a,x\}=B\cup A,$ i.e. $lk(B)\cap c_A$ is a cone and is hence contractible.  After we have added all $B$ of size $2$ we continue with $B$ of size $3$, etc.  In each case the link with what has been previously added is a cone with vertex $B\cup A$ (note  that $B\cup A$ is an OK partition if $B$ is).   This continues until we reach size $d$, in which case $B\cup A$ has $d+1$ elements, so its complement is a singleton $\{y\}$ and $B\cup A$ is not a legitimate partition.   At this stage the intersection of $lk(B)$ with the complex we are building is the entire link, and we can identify this link with the complex of OK partitions on $L\setminus \{a\}$ by viewing $\{a,y\}$ as a single point.  This link is spherical of dimension $d-1$ by induction, so adding $B$ produces a complex homotopy equivalent to the suspension of this link, i.e. a wedge of spheres of dimension $d$.  Adding further partitions of size $d$ produces more suspended wedges of spheres, so the entire complex $\mathfrak T_L$ is homotopy equivalent to a  wedge of spheres of dimension $d$.

\end{proof}

 \begin{figure}
 $$0=\partial\left( 
\begin{minipage}{4.5cm}\begin{tikzpicture}[scale=.4]
\node [above left]   (0) at (-4,3)  {$0$};
\node [below left]   (1) at (-4,-3) {$1$};
\node [above right] (2) at (4,3)   {$2$};
\node [below right] (3) at (4,-3)  {$3$};  
\draw (0) -- (-1,0) -- (1,0) -- (2);
\draw (4,-3) -- (1,0);
\draw (-1,0) -- (1);
\draw (-2.5,1.5) -- (-2,2);
\draw [densely dotted] (-2,2) -- (-1.5, 2.5);
\node [above] (v1) at (-1,4) {$v_1$};
\draw (-1.5,2.5) -- (-1,3) -- (v1);
\node [right] (v2) at (0,3) {$v_2$};
\draw (-1,3)--(v2);
\node [below right] (v3) at (3.5,.5) {$v_3$};
\draw (2.5,1.5) -- (v3);
\node [above left] (v7) at (-3,0) {$v_7$};
\draw (-2,-1) to (v7);
\node [above left] (v8) at (-4,-1) {$v_8$};
\draw (-3,-2) to (v8);
\node [below left] (v4) at (.5,-1.5) {$v_4$};
\draw (1.5,-.5) to (v4);
\node [below left] (v5) at ( 1.5,-2.5) {$v_5$};
\draw (2.5,-1.5) to (v5);
\node [below left] (v6) at (2.5, -3.5)   {$v_6$};
\draw (3.5,-2.5) to (v6);
\end{tikzpicture} 
\end{minipage}\right)
=
\begin{minipage}{4.7cm}
 \begin{tikzpicture}[scale=.4]
\node [above left] (0) at (-4,3) {$0$};
\node [below left] (1) at (-4,-3) {$1$};
 \node [above right] (2) at (4,3) {$2$};
 \node  [below right] (3) at (4,-3) {$3$};  
\draw (0) -- (-1,0) -- (1,0) -- (2);
\draw (4,-3) -- (1,0);
\draw (-1,0) -- (1);
\node [above right] (v1) at (-2,3) {$v_1$};
\draw (-3,2) to  (v1);
\node [above right] (v2) at (-1,2) {$v_2$};
\draw (-2,1)--(v2);
\node [below right] (v3) at (3.5,.5) {$v_3$};
\draw (2.5,1.5) -- (v3);
\node [above left] (v7) at (-3,0) {$v_7$};
\draw (-2,-1) to (v7);
\node [above left] (v8) at (-4,-1) {$v_8$};
\draw (-3,-2) to (v8);
\node [below left] (v4) at (.5,-1.5) {$v_4$};
\draw (1.5,-.5) to (v4);
\node [below left] (v5) at ( 1.5,-2.5) {$v_5$};
\draw (2.5,-1.5) to (v5);
\node [below left] (v6) at (2.5, -3.5)   {$v_6$};
\draw (3.5,-2.5) to (v6);
\end{tikzpicture} 
\end{minipage}
-
\begin{minipage}{4.7cm}
 \begin{tikzpicture}[scale=.4]
\node [above left] (0) at (-4,3) {$0$};
\node [below left] (1) at (-4,-3) {$1$};
 \node [above right] (2) at (4,3) {$2$};
 \node  [below right] (3) at (4,-3) {$3$};  
\draw (0) -- (-1,0) -- (1,0) -- (2);
\draw (4,-3) -- (1,0);
\draw (-1,0) -- (1);
\node [above right] (v1) at (-2,3) {$v_2$};
\draw (-3,2) to  (v1);
\node [above right] (v2) at (-1,2) {$v_1$};
\draw (-2,1)--(v2);
\node [below right] (v3) at (3.5,.5) {$v_3$};
\draw (2.5,1.5) -- (v3);
\node [above left] (v7) at (-3,0) {$v_7$};
\draw (-2,-1) to (v7);
\node [above left] (v8) at (-4,-1) {$v_8$};
\draw (-3,-2) to (v8);
\node [below left] (v4) at (.5,-1.5) {$v_4$};
\draw (1.5,-.5) to (v4);
\node [below left] (v5) at ( 1.5,-2.5) {$v_5$};
\draw (2.5,-1.5) to (v5);
\node [below left] (v6) at (2.5, -3.5)   {$v_6$};
\draw (3.5,-2.5) to (v6);
\end{tikzpicture} 
\end{minipage}
$$
\caption{$V$-labeled hairs commute in $H_1(\BLie)$.}\label{fig:HairsCommute}
\end{figure}

As in the $\comm$ and $\assoc$ cases, $H_0(\BLie)$ inherits its operad structure from $\BLie$.

Recall that a connected hairy graph corresponds to a $\BO$-graph only if it is based on a graph with a non-empty core, i.e. on a graph which is not a tree.  The following definition  accounts for hairy graphs based on trees as well.  
\begin{definition}
Let $\cO$ be a cyclic operad such that  $H_k(\BO)$ is zero for $k>0$, and   
give   $H_0(\BO)$  the (cyclic) operad structure induced from $\BO$.  
The \emph{reduced hairy graph complex} $\overline{\hairy}$  is spanned by finite unions of finite connected graphs, where 
\begin{enumerate} 
\item the non-tree components are core graphs whose vertices are $H_0(\BO)$-colored, and
\item the internal vertices of the tree components are $\cO$-colored.
\end{enumerate}
\end{definition}
We note that in the commutative and associative cases, the only tree  components that survive in homology are tripods. In the Lie case, however, there are interesting homology classes based on trees. For example,
$$
\begin{minipage}{2.3cm}
\begin{tikzpicture}[thick]
\draw (0,0)--(-.866,.5) node[fill=white]{$v_1$};
\draw (0,0)--(.866,.5) node[fill=white]{$v_2$};
\draw (0,0)--(0,-.2);
\draw[densely dotted] (0,-.2)--(0,-.8);
\draw (0,-.8)--(0,-1);
\draw (0,-1)--(-.866,-1.5) node[fill=white]{$v_3$};
\draw (0,-1)--(.866,-1.5) node[fill=white]{$v_4$};
\end{tikzpicture}
\end{minipage}
-
\begin{minipage}{2.5cm}
\begin{tikzpicture}[thick, rotate=90]
\draw (0,0)--(-.866,.5) node[fill=white]{$v_3$};
\draw (0,0)--(.866,.5) node[fill=white]{$v_1$};
\draw (0,0)--(0,-.2);
\draw[densely dotted] (0,-.2)--(0,-.8);
\draw (0,-.8)--(0,-1);
\draw (0,-1)--(-.866,-1.5) node[fill=white]{$v_4$};
\draw (0,-1)--(.866,-1.5) node[fill=white]{$v_2$};
\end{tikzpicture}
\end{minipage}
+
\begin{minipage}{2.5cm}
\begin{tikzpicture}[thick, rotate=90]
\draw (0,0)--(-.866,.5) node[fill=white]{$v_3$};
\draw (0,0)--(.866,.5) node[fill=white]{$v_2$};
\draw (0,0)--(0,-.2);
\draw[densely dotted] (0,-.2)--(0,-.8);
\draw (0,-.8)--(0,-1);
\draw (0,-1)--(-.866,-1.5) node[fill=white]{$v_4$};
\draw (0,-1)--(.866,-1.5) node[fill=white]{$v_1$};
\end{tikzpicture}
\end{minipage}
\neq 0\in H_2(\hairy\lie_V)
$$
  
\begin{theorem}\label{thm:quasi}
Let $\cO$ be a cyclic operad for which $H_k(\BO)$ vanishes for $k>0$.  Then the hairy graph complex $\hairy$ is quasi-isomorphic to the reduced  hairy graph complex $\overline{\hairy}$.
\end{theorem}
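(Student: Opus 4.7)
The plan is to realize $\hairy$ as the total complex of a bicomplex whose vertical homology is precisely $\overline{\hairy}$, and then apply a standard spectral-sequence collapse argument. First I would formalize the bicomplex structure already sketched in the excerpt: using $C_k\hairy=\bigoplus_{t+c=k}C^{t,c}(\hairy)$, verify that the boundary $\partial$ splits as $\partial_T+\partial_C$ into two anti-commuting differentials of bidegree $(-1,0)$ and $(0,-1)$ respectively. This presents $\hairy$ as the total complex of a first-quadrant double complex, so either of its two spectral sequences converges to $H_*(\hairy)$.

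Next I would compute the vertical homology column by column. Fix a core graph $G_0$ with $c$ internal vertices and restrict to the subcomplex of hairy graphs whose core is exactly $G_0$. Any such hairy graph is obtained by attaching, at each vertex $v$ of $G_0$, a $\BO$-element whose numbered i/o slots match the half-edges of $G_0$ incident to $v$, together with a disjoint union of free-standing $\cO$-tree components. This identifies the subcomplex with the tensor product $\bigotimes_{v\in G_0}\BO(\ell_v)\otimes (\text{free }\cO\text{-tree factor})$, where $\ell_v$ is the valence of $v$, and $\partial_T$ respects this tensor decomposition. By the K\"unneth formula and the hypothesis $H_k(\BO)=0$ for $k>0$, each $\BO$-tensor factor contributes $H_0(\BO)$ concentrated in vertical degree zero, while the free tree-component factor sits unchanged in both $\hairy$ and $\overline{\hairy}$. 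Summing over all core graphs $G_0$ identifies the $E^1$-page of the horizontal-filtration spectral sequence with $\overline{\hairy}$, where the $d_1$-differential is induced by $\partial_C$ and coincides with the boundary map on $\overline{\hairy}$ built from the induced cyclic operad structure on $H_0(\BO)$.

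Finally, since each column has vertical homology concentrated in a single degree, the spectral sequence degenerates at $E^2$ and yields an isomorphism $H_*(\hairy)\cong H_*(\overline{\hairy})$. To upgrade this to an honest quasi-isomorphism of chain complexes, I would invoke the acyclic assembly lemma for bounded-below double complexes with acyclic-away-from-one-row columns, which produces a canonical zig-zag of quasi-isomorphisms; equivalently one can construct an explicit chain map $\overline{\hairy}\to\hairy$ by lifting each $H_0(\BO)$-decoration at a core vertex to a representative in $C_0(\BO)$ (i.e.\ the trivial $\BO$-spider with no additional tree growth), the free tree components being sent identically. The main obstacle is the bookkeeping behind the K\"unneth decomposition of columns and the verification that the induced $d_1$-differential on $E^1$ matches the $\partial_C$-boundary of $\overline{\hairy}$ built from the cyclic operad structure on $H_0(\BO)$, including signs and orientation data; once that is established, the collapse argument is formal.
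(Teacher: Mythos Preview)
Your proposal is correct and follows essentially the same approach as the paper: the paper's proof is a one-sentence reference back to the double-complex decomposition $C_k\hairy=\bigoplus_{t+c=k}C^{t,c}(\hairy)$ set up earlier in the section, noting that the resulting single-row chain complex is exactly $\overline{\hairy}$. You supply more detail than the paper does (the K\"unneth decomposition over vertices of the core, and the explicit upgrade from an isomorphism of homology to a quasi-isomorphism of complexes), but the strategy is identical.
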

\begin{proof}
The chain complex $$\ldots H_1(C_{*,c}(\hairy))\buildrel{\bdry_C}\over\to H_1(C_{*-1,c}(\hairy))\ldots $$  resulting from the double-complex decomposition of $\hairy$ is exactly the same as  the chain complex for the reduced hairy graph complex $\overline{\hairy}$.
\end{proof}


\section{The rank one subcomplex and  dihedral homology}\label{sec:IO}

In the last section we saw that for $\cO= \comm, \assoc$ and $\lie,$   $H_k(\BO)$ vanishes for $k\neq 0$; in particular, $H_k(\BO\pp{2})=0$ for $k\neq 0$.  This implies that we can give $H_0(\BO\pp{2})$ the structure of a (non-unital) associative algebra with involution.  In this section we show that the dihedral homology of this algebra, as defined by Loday in \cite{Loday}, can be identified with 
the homology of the subcomplex of $\hairy$ generated by connected graphs with cyclic fundamental group.   We then use known results on cyclic and dihedral homology to compute this homology for $\cO= \comm, \assoc$ and $\lie$, thus finding new non-trivial hairy graph homology classes for these operads.

\subsection{The associative algebras $\IOV$ and  $\AOV$}

Recall that   $\BO\pp2$ is generated by trees whose internal vertices (which are at least trivalent) are colored by operad generators,  and one vertex is distinguished.  There are two leaves  at the distinguished vertex labeled $0$ and $1$, and the rest of the leaves are labeled by elements of $V$ (see Figure~\ref{BO2}).  Operad composition defines an associative multiplication on these generators, using the leaves labeled $0$ as output slots and $1$ as input slots, turning   $\BO\pp2$ into a (non-unital) associative algebra.  \begin{figure}
$$
\begin{minipage}{3.5cm}
\begin{tikzpicture}[thick, scale=.5]
\tikzstyle{operad}=[draw,shape=circle, densely dotted, fill=blue!10];
\node[operad]  (o1) at (0,0) {$o_1$};
\node[operad]  (o2) at (2.5,2.5) {$o_2$};
\node (0) at (90:2.5) {$v_1$};
\node (1) at (155:2.5) {$v_2$};
\node (2) at (-45:2.5) {$1$}; 
\node (v0) at (-135:2.5) {$0$};
\node (v1) at (2.5,5) {$v_3$};
\node (v2) at (.5,2.5+2) {$v_4$};
\node (v3) at (4.5,2.5+2) {$v_5$};
\begin{scope}[decoration={markings,mark = at position 0.5 with {\arrow{stealth}}}]
\draw[postaction={decorate}] (o1) -- (o2); 
\end{scope}
\draw (o1) -- (0)
(o1) -- (1)
(o1) -- (2)
(o1) -- (v0)
(o2) -- (v1)
(o2) -- (v2)
(o2) -- (v3);
\end{tikzpicture}
\end{minipage}
$$
\caption{Element of $ \CB_2\cO_V\pp{2}$}\label{BO2}
\end{figure}
Since the higher graph homology groups of $\BO\pp2$ vanish, the multiplication on  the differential graded algebra $\BO\pp2$ induces a multiplication on its first homology.  We denote the resulting (non-unital) associative algebra by $\IOV$:
$$\IOV = H_0(\BO\pp2).$$
There is a standard way of turning a non-unital associative algebra $I$ into a unital one $I_+$, by setting $I_+=\F\oplus I$ with multiplication $(r,v)(s,w)=(rs,rw+sv+vw)$ (so the unit is $(1,0)$).    In the case of $\IOV$, we can interpret this construction as follows.  We add a generator to $\BO$  with only two leaves, labeled $0$ and $1,$ whose internal vertex is colored by the identity in $\cO$. This element acts as an identity for $\BO$ and also for the first homology $\IOV$, and we define  $\AOV:=(\IOV)_+$.  

Since $\cO$ is a cyclic operad, there is an involution which switches the output and input slots, thereby inducing an involution of   $\AOV$ that is an anti-automorphism of the algebra.

\subsection{Connection with dihedral homology} 

 For the convenience of the reader, we now   recall the definitions of cyclic and dihedral homology.  For more information and details, see  Loday's book \cite{Loday}.

Give an associative algebra $A$  the {\em Hochschild chain complex} $(C(A),b)$ is defined by $$C_k(A)=A^{\otimes k+1}\hbox{\qquad and \qquad}  b_k = \sum_{i=0}^k (-1)^i d_i\colon  A^{\otimes k+1}\to A^{\otimes k}$$ where
\begin{align*}
d_i(a_0\otimes\ldots\otimes a_k)&= a_0\otimes\ldots \otimes a_ia_{i+1}\otimes \ldots\otimes a_k \ \hbox{ if }\,  0\leq i <k\\ 
d_k(a_0\otimes\ldots\otimes a_k) &= a_ka_0\otimes\ldots\otimes a_{k-1}\end{align*}

If $A$ has an involution $a\mapsto \bar a,$ then dihedral homology is defined as the homology of the chain complex obtained by taking the quotient of $C(A)$ modulo natural actions of the dihedral groups $D_{2(k+1)}$ on $C_k(A)$  for each $k$.  Specifically, we use the presentation   
$$D_{2(k+1)}=\langle\tau_k,\omega_k\,|\, \tau_k^{k+1}=\omega_k^2, \omega_k\tau_k\omega_k^{-1}=\tau_k^{-1}\rangle$$
The generators $\tau_k$ and $\omega_k$ act on $C_k(A)$ by the formulas
\begin{align*}
\omega_k(a_0,\ldots,a_k)&=(\overline a_0,\overline a_k,\ldots,\overline a_1)\\
\tau_k(a_0,\ldots,a_k)&=({a_k},{a_0},\ldots,{a_{k-1}})
\end{align*}
The following definitions are justified by \cite{Loday}, Theorem 5.2.8.

\begin{definition}[Cyclic, Dihedral and Skew Dihedral homology in characteristic $0$] 
Let $A$ be an associative algebra over a field of characterisitic $0$, and define maps $C_k(A)\to C_k(A)$ for each $k$ by    $t_k=(-1)^k\tau_k$ and $y_k=(-1)^{k(k+1)/2}\omega_k$.  
 Then
\begin{enumerate}
\item The boundary map $b$ on $C(A)$ induces a boundary map on the quotient $C(A)/(1-t)$.  The homology of the resulting quotient complex is the {\em  cyclic homology} of $A$, denoted $\HC_*(A).$  
\item If $A$ has an involution $a\mapsto \bar{a}$, then the boundary map on $C(A)/(1-t)$ induces boundary maps on the quotients $C(A)/(1-t,1-y)$ and $C(A)/(1-t,1+y)$.  The homology of the quotient complex $C(A)/(1-t,1-y)$ is the {\em dihedral homology}  $\HD_*(A)$, and the homology of  $C(A)/(1-t,1+y)$ is the {\em skew dihedral homology} $\HD_*'(A).$ 
\end{enumerate}
\end{definition}

 The  relation between cyclic and  dihedral homology is very simple (see \cite{Loday}, 5.2.7.1), namely
there is a canonical isomorphism $$\HC_k(A)=\HD_k(A)\oplus \HD'_k(A).$$ 

The definition of cyclic and dihedral homology makes no reference to a unit in $A$, so also makes sense for non-unital algebras $I$.  If $I$ is  non-unital, then there are isomorphisms between the cyclic or dihedral homology of $I$ and the {\em reduced} cyclic or dihedral homology of $I_+$

We now make the connection between dihedral homology and the homology of the subcomplex $C_{*,1}\hairy$ of $\hairy$ spanned by connected hairy graphs $X$ of rank $1$, i.e. with cyclic fundamental group.  Recall from \cite{CV} that this is denoted $H_{*,1}(\hairy)$.  


\begin{proposition}\label{prop:dihedral} If $H_i(\BO\pp2)$ vanishes for $i>0$ then  the homology   of the rank one part of the hairy graph complex is isomorphic to the shifted dihedral homology of the   algebra $\IO=H_0(\BO\pp2)$:
$$H_{k,1}(\hairy)\iso \HD_{k-1}(\IOV)\iso {\overline{\HD}}_{k-1}(\AOV).$$
\end{proposition}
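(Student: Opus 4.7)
The plan is to identify the rank-one subcomplex $C_{*,1}\hairy$, after the reductions already available in the paper, with (a degree shift of) the dihedral chain complex of $\IOV$, then transfer to $\AOV$ by the standard unitization argument.

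First I would apply Theorem~\ref{thm:quasi} to replace $\hairy$ by the quasi-isomorphic reduced complex $\overline{\hairy}$. The rank-one part of $\overline{\hairy}$ has a very restrictive form: a connected rank-one core graph is a cycle $C_n$ (all of whose vertices are bivalent in the core), and in $\overline{\hairy}$ each such bivalent core vertex is colored by an element of $H_0(\BO\pp{2})=\IOV$. Thus a generator of $C_{k,1}\overline{\hairy}$ is an oriented cyclic word $(a_0,a_1,\dots,a_{k-1})$ of $k$ elements of $\IOV$, where the orientation is encoded by a linear ordering of the $k$ internal (core) vertices together with a coherent direction on each of the $k$ core edges. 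The boundary operator $\partial_C$ contracts a core edge between $a_i$ and $a_{i+1}$ and composes the two $\BO$-pieces at its endpoints along the corresponding $0$/$1$ slots; on homology this composition is precisely the multiplication in the associative algebra $\IOV$, matching the Hochschild face $d_i$.

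Next I would match the symmetries. Cyclically rotating the vertex labels of the $k$-cycle by one step is an even/odd permutation of $k$ vertices and simultaneously reorients no edges, so it contributes a sign $(-1)^{k-1}$; this is exactly the sign in $t_{k-1}=(-1)^{k-1}\tau_{k-1}$ that identifies invariants under cyclic shift in $\IOV^{\otimes k}=C_{k-1}(\IOV)$ with the quotient $C(\IOV)/(1-t)$. Reflecting the cycle (fixing $a_0$ and reversing $a_1,\dots,a_{k-1}$) simultaneously reverses every directed core edge and performs an order-reversing permutation on $k-1$ positions, and swaps the $0$/$1$ slots at each vertex; the first two together give the sign $(-1)^{k(k-1)/2}$ required by $y_{k-1}=(-1)^{(k-1)k/2}\omega_{k-1}$, while the slot swap is exactly the involution $a\mapsto\bar a$ built into $\omega_{k-1}$. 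Putting these together, the identification sends the chain complex computing $H_{k,1}(\overline{\hairy})$ isomorphically onto the quotient complex $C_{k-1}(\IOV)/(1-t,1-y)$, whose homology is $\HD_{k-1}(\IOV)$ by definition.

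To pass from $\IOV$ to its unitization $\AOV=(\IOV)_+$, I would cite the standard fact (see Loday \cite{Loday}) that for any non-unital algebra $I$ with involution, the dihedral homology of $I$ coincides with the reduced dihedral homology of $I_+$; this gives the second isomorphism $\HD_{k-1}(\IOV)\cong\overline{\HD}_{k-1}(\AOV)$. The hypothesis that $H_i(\BO\pp{2})=0$ for $i>0$ is used precisely to promote the strict multiplication on $\BO\pp{2}$ to a strict (rather than only $A_\infty$) multiplication on $\IOV=H_0(\BO\pp{2})$, so that the plain Hochschild/dihedral complex of an associative algebra applies.

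The main obstacle is the orientation/sign bookkeeping in the second step: one must verify that the conventions coming from ordering internal vertices and directing internal edges of a hairy graph precisely reproduce the Loday signs $(-1)^{k-1}$ and $(-1)^{k(k-1)/2}$ for $\tau$ and $\omega$, and that the $\cO$-cyclic involution at a bivalent core vertex agrees with the algebra involution $a\mapsto \bar a$ inherited from the cyclic structure of $\cO$. Everything else is a direct dictionary translation.
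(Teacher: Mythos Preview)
Your approach is essentially the same as the paper's: both reduce the rank-one part to $\BO\pp{2}$-colored circles via the ``haircut'' construction of Section~\ref{sec:BO} and then identify these with the dihedral chain complex. The paper does these steps in the opposite order---it first recognizes the rank-one subcomplex of $\hairy$ as (a shift of) the dihedral complex of the \emph{differential graded} algebra $\BO\pp{2}$, and then uses the vanishing hypothesis to replace $\BO\pp{2}$ by its $H_0$---but the content is the same, and you are somewhat more explicit about the orientation/sign check than the paper is.

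One small technical point: you invoke Theorem~\ref{thm:quasi} directly, but that theorem is stated under the stronger hypothesis that $H_k(\BO)$ vanishes for all arities, whereas the proposition only assumes vanishing of $H_i(\BO\pp{2})$. Since a rank-one core graph has only bivalent core vertices, the double-complex argument underlying Theorem~\ref{thm:quasi}, when restricted to the rank-one subcomplex, uses only $\BO\pp{2}$; so the conclusion you want still holds, but you should say this rather than cite Theorem~\ref{thm:quasi} as a black box.
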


\begin{proof}  A rank one hairy graph has a core $c(X)$ consisting of a single circle with $k\geq 1$ vertices.  As we saw in Section~\ref{sec:BO} we can regard $X$ as $c(X)$ with vertices colored by elements of $\BO\pp2,$ where every edge of $c(X)$ is directed from the output slot of one operad element to the input slot of the next operad  element (see Figure~\ref{fig:rankone}).

\begin{figure}
 \begin{tikzpicture}
\tikzstyle{every node}=[draw,shape=circle, densely dotted, fill=blue!30];
\begin{scope}[decoration={markings,mark = at position 0.5 with {\arrow{stealth}}}]
\node (o1) at (90:1.5) {$Bo_1$};
\node (o2) at (18:1.5) {$Bo_2$};
\node (o3) at (-54:1.5) {$Bo_3$};
\node (o4) at (234:1.5) {$Bo_4$};
\node (o5) at (162:1.5) {$Bo_5$};
\midarrow (o1) to  (o2);
\midarrow (o2) to  (o3);
\midarrow (o3) to (o4);
\midarrow (o4) to    (o5);
\midarrow (o5) to (o1);
\end{scope}
\end{tikzpicture} 
\caption{Rank one $\BO$-graph}\label{fig:rankone}
\end{figure}

 The involution on $\BO\pp2$ changes the direction on each edge, and the boundary operator is the sum over edges of $c(X)$ of the result of multiplying adjacent algebra elements.   The involution on $\BO\pp{2}$ sends a generator  to itself or minus itself,  depending on the parity of $k$.  What we have just described is the chain complex for computing the dihedral homology of the non-unital algebra $\BO\pp{2}$, shifted by one.   
Theorem~\ref{thm:quasi} says that for $\cO=\comm, \assoc$ or $\lie$ this subcomplex is quasi-isomorphic to the graph homology complex (allowing bivalent vertices)  of circles with $k$ vertices colored by elements of $\IOV=H_0(\BO\pp{2})$, i.e. $$H_{k,1}(\hairy_V) = HD_{k-1}(\BO\pp{2})\iso \HD_{k-1}(\IOV)\iso \overline{\HD}_{k-1}(\AOV) .$$
\end{proof}

For $k=1$, this together with the definition of dihedral homology in dimension $0$ gives $H_{1,1}(\hairy_V)\iso \overline{\HD}_0(\AOV)\iso \AOV/([\AOV,\AOV] + \AO^\sigma_V),$ where $\AO^\sigma_V$ denotes the
subspace of elements in $\AOV$ which are fixed by the involution. 

If the homology of $\BO\pp2$ is not concentrated in dimension zero then, as we see in the proof of the proposition, we can still identify rank one hairy graph homology with a dihedral homology group, but we must use a graded version of dihedral homology and there is no reduction to the ordinary dihedral homology of an associative algebra.

\subsection{Calculations for $\cO=\lie$}

\begin{proposition} $A{\lie_V}\cong S(V)$ and the involution is multiplication by $-1$ on $V$.
\end{proposition}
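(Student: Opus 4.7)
I would specialize Proposition~\ref{thm:HoneLie} to $\ell = 2$ to get an explicit presentation of $I\lie_V$ and then construct the isomorphism to $S(V)$.

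With $\ell = 2$, Proposition~\ref{thm:HoneLie} describes $I\lie_V = H_0(\BLie\pp{2})$ as the span of pairs $(x, y) \in S(V)\otimes S(V)$ (excluding $(1,1)$, so that the distinguished vertex of the $\BLie$-tree is at least trivalent), where $x$ and $y$ record the commuting $V$-labeled hairs on the two legs of the spider, modulo
\[
v x \otimes y \;+\; x \otimes v y \;=\; 0 \qquad (v \in V).
\]
Define $\phi \colon I\lie_V \to \bigoplus_{k\geq 1} S^k(V)$ by $\phi(x, y) = x\cdot \sigma(y)$, where $\sigma \colon S(V) \to S(V)$ is the algebra automorphism extending $-1 \colon V \to V$. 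A one-line computation shows $\phi$ vanishes on the defining relation, and iterating the relation once per $V$-factor of $y$ expresses every class in the canonical form $(z, 1)$ with $z = x\sigma(y)$, from which $\phi$ is seen to be a linear isomorphism. Adjoining the formal unit, corresponding to the bare strand with operad-identity coloring, then gives a linear isomorphism $A\lie_V \cong S(V)$ sending the identity to the identity.

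For the involution, the cyclic anti-involution of $\BLie\pp{2}$ swaps legs $0$ and $1$, inducing $(x, y) \mapsto (y, x)$ on $I\lie_V$. Under $\phi$ this sends $x\sigma(y) \mapsto y\sigma(x) = \sigma(\sigma(y)x) = \sigma(x\sigma(y))$ using the commutativity of $S(V)$ and $\sigma^2 = \mathrm{id}$. Hence the transported involution on $S(V)$ is precisely $\sigma$, multiplication by $-1$ on $V$ extended multiplicatively, as claimed.

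For the multiplication, the product $(x_1, y_1)\cdot (x_2, y_2)$ in $\BLie\pp{2}$ is given by operad composition in $\lie$ at the $1 \leftrightarrow 0$ slot pairing of the two distinguished vertices. After computing this composition, expanding the resulting element of $\lie$ in trivalent-tree form, and reducing to the canonical form of Proposition~\ref{thm:HoneLie}, the combined hairs arrange themselves so that $\phi$ of the product equals $\phi(x_1, y_1)\phi(x_2, y_2) \in S(V)$. The main obstacle is precisely this last verification: sign contributions from cyclic Lie operad composition, from planar and orientation conventions, and from iterated applications of the defining relation all enter, and one must check that they conspire to reproduce the commutative $S(V)$-product on the nose — the fact that the target is commutative is essentially what forces all these signs to cancel, but the bookkeeping requires setting up the explicit form of $\phi$ with care.
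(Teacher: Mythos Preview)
Your approach is correct, but it is more circuitous than the paper's. The paper observes directly that for $\ell=2$ the Lie spider body is the identity in $\lie\pp2$, so a generator of $H_0(\BLie\pp2)$ is simply a line segment from $0$ to $1$ with commuting $V$-labeled hairs attached on one fixed side; the identification with a monomial in $S(V)$ is then tautological. In that picture the multiplication is literal concatenation of caterpillars, so the product in $S(V)$ is immediate, and the involution sign comes from a single application of AS per hair when the segment is flipped.

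By contrast, you keep track of hairs on the two legs separately via pairs $(x,y)$ and then quotient by the relation $(vx,y)+(x,vy)=0$, encoding the passage to canonical form in the map $\phi(x,y)=x\,\sigma(y)$. This is a perfectly good presentation of the same space, and your involution computation is clean. But the thing you single out as ``the main obstacle'' --- verifying that $\phi$ is a ring homomorphism --- is an artifact of this parameterization: the sign bookkeeping you anticipate arises from shuttling hairs between legs via the relation, not from anything intrinsic to the Lie composition. In the paper's picture there is nothing to check. So your proof is valid, but the detour through $(x,y)$ and $\sigma$ creates work that the direct identification avoids; if you adopt the single-segment canonical form from the start, all three structures (vector space, product, involution) are read off in one line.
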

\begin{proof} By Proposition~\ref{thm:HoneLie} $I\lie_V=H_0(\BLie\pp2)$ is generated by line segments whose ends are labeled $0$ and $1,$ with commuting $V$-labeled hairs attached on top:
 \begin{center}
\begin{tikzpicture}[scale=.7]
\node (0) [left] at (0,0) {$0$};
\node (1) [right] at (5,0) {$1$};
\node (v1) [above] at (1,1) {$v_1$};
\node (v2) [above] at (2,1) {$v_2$};
\node (vk) [above] at (4,1) {$v_k$};
\draw (0) -- (1);
\draw (1,0) -- (v1);
\draw (2,0) -- (v2);
\node (dots) at (3,.5) {$\ldots$};
 \draw (4,0) -- (vk);  
\end{tikzpicture}
  \end{center}
After adding a unit (with no hairs attached), we get  $A{\lie}_V\iso S(V)$ the symmetric algebra of $V$.  The involution sends $x\to -x$ if $x$ has an odd number of hairs attached, and fixes $x$ if the number of hairs is even.  This is because interchanging $0$ and $1$ makes it necessary to flip each hair to the other side, incurring a sign change for each flip by the $AS$ relation.
\end{proof}

The reduced cyclic homology of the symmetric algebra $S(V)$  is computed in  \cite{Loday}, Theorem 3.2.5, where it is given by the formula 
$$\overline{\HC}_n(S(V))= \Omega^n/d\Omega^{n-1}.$$
Here $\Omega^n=\Omega^n_{S(V)|\F}$ is the $S(V)$-module of differential $n$-forms on $V$, and $d\colon\Omega^{n-1}\to\Omega^n$ is the standard differential $d(adv_1\ldots dv_n)=dadv_1\ldots dv_n$.   

There is an isomorphism $S(V)\otimes \ext^n V\iso \Omega^n$ sending $a\otimes v_1\wedge\ldots \wedge v_n\mapsto adv_1\ldots dv_n$.  
Under this isomorphism the boundary map, restricted to homogeneous degree $k$ polynomials,  is   $d\colon S^{k+1}(V)\otimes \ext^{n-1} V\to S^{k}(V)\otimes \ext^n V$ or, in the notation of Schur functors
$$d\colon \SF{(k+1)}(V)\otimes \SF{(1^{n-1})}(V)\to \SF{(k)}(V)\otimes \SF{(1^n)}(V) $$
A simple case of the Littlewood-Richardson rule (see, e.g. \cite{FH}, Appendix A) shows
$\SF{(k)}(V)\otimes \SF{(1^n)}(V)=\SF{(k,1^n)}(V)\oplus \SF{(k+1,1^{n-1})}(V),$ so
$$d\colon \SF{(k+1,1^{n-1})}(V)\oplus \SF{(k+2,1^{n-2})}(V)\to \SF{(k,1^n)}(V)\oplus \SF{(k+1,1^{n-1})}(V).$$  Since $d$ is nonzero and is $GL(V)$-equivariant, it must surject onto the $\SF{(k+1,1^{n-1})}(V)$ component, which implies that  
$$\overline{\HC}_n(S(V))= \bigoplus_{k\geq 0} \SF{(k,1^{n})}(V).$$
The action of the involution $\sigma$ on $\SF{(k,1^{n})}(V)$ is  multiplication by $(-1)^{k+1}$. 
 We conclude that 
$$
H_{1,n+1}(\hairy\assoc_V)=\overline{\HD}_n(S(V))  = \bigoplus_{k\geq 0} \SF{(2k+1,1^{n})}(V). 
$$

When $n=0$, this recovers   the computation for $\cO=
\lie$ in \cite{CKV} that $H_{1,1}(\hairy_V)\cong \bigoplus_{k=0}^\infty S^{2k+1}(V).$

The method of differential forms used in this calculation fails for the commutative and associative operads because the corresponding algebras are not \emph{smooth} (see \cite{Loday}). Indeed in these cases the space of  $n$-forms $\Omega^n_{\AOV|\F}$ is trivial. 
\subsection{Calculations for $\cO=\comm$}

By Theorem~\ref{thm:commutative}, $I\comm_V=H_0(\BComm)\pp2$ is generated by tripods with leaves labeled by $0, 1$ and $v$ for some $v\in V$, i.e. $I\comm_V\iso V$.  The multiplication is trivial, since composing two such tripods results in a tree with two $V$-labeled leaves, which is zero in homology.   The involution is trivial since switching the labels $0$ and $1$ gives the same tripod. 

Since multiplication in $I\comm_V$ is trivial all of the differentials in the chain complex for ${\HD}(I\comm_V)$ are zero, so $$H_{k+1,1}(\hairy\comm_V)={\HD}_{k}(I\comm_V)=V^{\otimes k+1}/(1-t,1-y).$$ where  $t\cdot(v_0\otimes\cdots\otimes v_k)= (-1)^{k} v_1\otimes\cdots\otimes v_k\otimes v_0$ and $y\cdot (v_0\otimes\cdots\otimes v_k)=(-1)^{k+1+\binom{k+1}{2}}v_k\otimes\cdots\otimes v_0$.

Below we compute $H_{k,1}(\hairy_V)$ explicitly for small values of $k$:

$H_{1,1}(\hairy\comm_V)=HD_0(I\comm_V)=0$ since $y\cdot v_0=-v_0$.  

$H_{2,1}(\hairy\comm_V)=HD_1(I\comm_V)\iso \ext^2V,$ where $v_0\wedge v_1$ corresponds to the  hairy graph
$$
\begin{minipage}{3.5cm}
\begin{tikzpicture}[scale=.5]
\begin{scope}[decoration={markings,mark = at position 0.5 with {\arrow{stealth}}}] 
\node (v) [left] at (-2,0) {$v_0$};
\node (w) [right] at (2,0) {$v_1$};
\draw (1,0)--(w);
\draw (-1,0)--(v);
\midarrow (-1,0) to [out=90, in=90] (.5+.5,0);
\midarrow (1,0) to [out=-90, in=-90] (-1,0);
\end{scope}
\end{tikzpicture}
\end{minipage}
$$

$H_{3,1}(\hairy\comm_V)=HD_2(I\comm_V)\iso S^3V,$ where $v_0v_1v_2$ corresponds to the hairy graph
$$
\begin{minipage}{3.5cm}
\begin{tikzpicture}[scale=.5]
\begin{scope}[decoration={markings,mark = at position 0.5 with {\arrow{stealth}}}] 
\node (v0)  [above right] at (30:1.7) {$v_0$};
\node (v1) [above left] at (150:1.7) {$v_1$};
\node (v2) [below] at (270:1.7) {$v_2$};
\draw (30:1)--(v0);
\draw (150:1)--(v1);
\draw (270:1)--(v2);
\midarrow (30:1) to [out=120, in=60] (150:1);
\midarrow (150:1) to [out=240, in=180] (270:1);
\midarrow (270:1) to [out=0, in=-60] (30:1);
\end{scope}
\end{tikzpicture}
\end{minipage}
$$
$H_{4,1}(\hairy\comm_V)=HD_3(I\comm_V)\iso \ext^4V \oplus \SF{(2,2)}(V).$ Here $v_0\wedge v_1\wedge v_2\wedge v_3$ corresponds to the following sum of hairy graphs
$$
\begin{minipage}{3.2cm}
\begin{tikzpicture}[scale=.5]
\begin{scope}[decoration={markings,mark = at position 0.5 with {\arrow{stealth}}}] 
\node (v0) [above right] at (45:1.7) {$v_0$};
\node (v1) [above left] at (135:1.7) {$v_1$};
\node (v2) [below left] at (-135:1.7) {$v_2$};
\node (v3) [below right] at (-45:1.7) {$v_3$};
\draw (45:1)--(v0);
\draw (135:1)--(v1);
\draw (-135:1)--(v2);
\draw (-45:1)--(v3);
\midarrow (45:1) to [out=135, in=45] (135:1);
\midarrow (135:1) to [out=-135, in=135] (-135:1);
\midarrow (-135:1) to [out=-45, in=-135] (-45:1);
\midarrow (-45:1) to [out=45, in=-45] (45:1);
\end{scope}
\end{tikzpicture}
\end{minipage}
-
\begin{minipage}{3.2cm}
\begin{tikzpicture}[scale=.5]
\begin{scope}[decoration={markings,mark = at position 0.5 with {\arrow{stealth}}}] 
\node (v0) [above right] at (45:1.7) {$v_1$};
\node (v1) [above left] at (135:1.7) {$v_0$};
\node (v2) [below left] at (-135:1.7) {$v_2$};
\node (v3) [below right] at (-45:1.7) {$v_3$};
\draw (45:1)--(v0);
\draw (135:1)--(v1);
\draw (-135:1)--(v2);
\draw (-45:1)--(v3);
\midarrow (45:1) to [out=135, in=45] (135:1);
\midarrow (135:1) to [out=-135, in=135] (-135:1);
\midarrow (-135:1) to [out=-45, in=-135] (-45:1);
\midarrow (-45:1) to [out=45, in=-45] (45:1);
\end{scope}
\end{tikzpicture}
\end{minipage}
-
\begin{minipage}{3.2cm}
\begin{tikzpicture}[scale=.5]
\begin{scope}[decoration={markings,mark = at position 0.5 with {\arrow{stealth}}}] 
\node (v0) [above right] at (45:1.7) {$v_2$};
\node (v1) [above left] at (135:1.7) {$v_0$};
\node (v2) [below left] at (-135:1.7) {$v_3$};
\node (v3) [below right] at (-45:1.7) {$v_1$};
\draw (45:1)--(v0);
\draw (135:1)--(v1);
\draw (-135:1)--(v2);
\draw (-45:1)--(v3);
\midarrow (45:1) to [out=135, in=45] (135:1);
\midarrow (135:1) to [out=-135, in=135] (-135:1);
\midarrow (-135:1) to [out=-45, in=-135] (-45:1);
\midarrow (-45:1) to [out=45, in=-45] (45:1);
\end{scope}
\end{tikzpicture}
\end{minipage}
$$

and, setting  $\tau_{(ij)(kl)}$ to be the sum of permutations $\id+(ij)+(kl)+(ij)(kl)$, $\SF{(2,2)}(V)$ is generated by sums
$$
\tau_{(01)(23)}\directedwheel{v_0}{v_1}{v_2}{v_3}- \quad \tau_{(02)(13)}\directedwheel{v_0}{v_1}{v_2}{v_3}$$

A similar calculation can be done for $k=4$.  Using the abbreviation $n[\lambda]$ for the direct sum of $n$ copies of the Schur functor $\SF{\lambda}(V)$ for the partition $\lambda$, we have 
$$
\begin{array}{c|ccccc}
k&0&1&2&3&4\\
\hline
{HD}_k(I\comm_V)&0&[1^2]& [3] & [1^4]\oplus [2^2]&2[3,1^2]
\end{array}
$$

%

\subsection{Calculations for $\cO=\assoc$.} 
By Theorem~\ref{thm:associative}, $I\assoc_V=H_0(\BAssoc\pp2)$ is generated by the planar trees of the following form, where $u, u',v, v'\in V$;
\begin{center}
\begin{tikzpicture}[scale=.9]
 \node (I) at (-1.8,0) {$(a)$};
  \node (0a) at (-1,0) {$0$};
   \node (1a) at (1,0) {$1$};
    \node (va) at (0,1) {$v$};
    \draw (0a) -- (1a);
    \draw (0,0) -- (va);
 \draw  (0,0) circle (.2);
   \node (II) at (2.2,0) {$(b)$};
      \node (0b) at (3,0) {$0$};
   \node (1b) at (5,0) {$1$};
    \node (vb) at (4,-1) {$v'$};
    \draw (0b) -- (1b);
    \draw (4,0) -- (vb);
     \draw  (4,0) circle (.2);
    \node (III) at (6.2,0) {$(c)$};
      \node (0c) at (7,0) {$0$};
   \node (1c) at (9,0) {$1$};
    \node (vc) at (8,1) {$u$};
        \node (wc) at (8,-1) {$u'$};
    \draw (0c) -- (1c);
    \draw (wc) -- (vc);
     \draw  (8,0) circle (.2);
  \end{tikzpicture}
\end{center}
 The product of a generator of type (a) and one of type (b), in either order, is of type (c), and all other products are zero.   Thus $I\assoc_V\iso V \oplus V\oplus (V\otimes V)$ with (commutative) multiplication $$(v,v',u\otimes u')(w,w',z\otimes z')=(0,0,v\otimes w' + v'\otimes w).$$  
The involution interchanging $0$ and $1$ switches the two $V$ factors and acts on $V\otimes V$ by sending $v\otimes w$ to $w\otimes v$.  

Below we indicate the results of calculating  $H_{k,1}(\hairy\assoc_V)$ for some small values of $k$.

For $k=1$, we have $H_{1,1}(\hairy\assoc_V)=\HD_0(I\assoc_V)= (V\oplus V\oplus (V\otimes V))/(1-t)(1-s).$  
Since $I\assoc_V$ is commutative, $t$ acts trivially and we need only factor out by the action of the involution $y$.   
Since this interchanges the first two $V$ factors and acts on $V\otimes V$ by  $v\otimes w \leftrightarrow w\otimes v$, we obtain
$$H_{1,1}(\hairy\assoc_V)=HD_0(I\assoc_V)\iso V\oplus \ext^2 V.$$  In terms of hairy graphs, this is generated by
$$
\begin{minipage}{2cm}
\begin{tikzpicture}[scale=.7]
\node (v) [right] at (.5,0) {$v$};
\draw (0,0) -- (v);
\draw [->] (0,0) to [ out=90, in=90] (-2,0);
\draw (-2,0) to [ out=-90, in=-90] (0,0);
 \draw  (0,0) circle (.2);
\end{tikzpicture}
\end{minipage}
\quad \hbox{and} \quad
\begin{minipage}{3cm}
\begin{tikzpicture}[scale=.7]
\node (w) [left] at (-.5,0) {$w$};
\node (v) [right] at (.5,0) {$v$};
\draw (w) to (0,0) to (v);
\draw [->] (0,0) to [ out=90, in=90] (-2,0);
\draw (-2,0) to [ out=-90, in=-90] (0,0);
 \draw  (0,0) circle (.2);
\end{tikzpicture}
\end{minipage}
$$

For $k=2$ we obtain
$$H_{2,1}(\hairy\assoc_V)=HD_1(I\assoc_V) \iso \ext^2 V\oplus \ext^2 V\oplus (V\otimes \ext^2 V),$$
where the first summand is generated by
 $$
 \begin{tikzpicture}[scale=.7]
\begin{scope}[decoration={markings,mark = at position 0.5 with {\arrow{stealth}}}] 
\node (v) [right] at (.5,0) {$v$};
\node (w) [left] at (-2.5,0) {$w$};
\draw (0,0) -- (v);
\draw (-2,0) -- (w);
\midarrow (0,0) to [ out=90, in=90] (-2,0);
\midarrow (-2,0) to [ out=-90, in=-90] (0,0);
 \draw  (0,0) circle (.2);
  \draw  (-2,0) circle (.2);
\end{scope}
\end{tikzpicture}
$$
the second by 
$$
\begin{tikzpicture}[scale=.7]
\begin{scope}[decoration={markings,mark = at position 0.5 with {\arrow{stealth}}}] 
\node (v) [right] at (.5,0) {$v$};
\node (w) [right] at (-1.5,0) {$w$};
\draw (0,0) -- (v);
\draw (-2,0) -- (w);
\midarrow (0,0) to [ out=90, in=90] (-2,0);
\midarrow (-2,0) to [ out=-90, in=-90] (0,0);
 \draw  (0,0) circle (.2);
  \draw  (-2,0) circle (.2);
\end{scope}
\end{tikzpicture}
$$
and the third by the difference
$$
\begin{minipage}{4cm}
\begin{tikzpicture}[scale=.7]
\begin{scope}[decoration={markings,mark = at position 0.5 with {\arrow{stealth}}}] 
\node (v) [right] at (.5,0) {$v^{\prime\prime}$};
\node (w) [right] at (-1.5,0) {$w$};
\node (v2) [left] at (-2.5,0) {$v'$};
\draw (0,0) -- (v);
\draw (-2,0) -- (w);
\draw (-2,0) -- (v2);
\midarrow (0,0) to [ out=90, in=90] (-2,0);
\midarrow (-2,0) to [ out=-90, in=-90] (0,0);
 \draw  (0,0) circle (.2);
  \draw  (-2,0) circle (.2);
\end{scope}
\end{tikzpicture}
\end{minipage}
- \qquad
 \begin{minipage}{4cm}
\begin{tikzpicture}[scale=.7]
\begin{scope}[decoration={markings,mark = at position 0.5 with {\arrow{stealth}}}] 
\node (v) [right] at (.5,0) {$v^{\prime}$};
\node (w) [right] at (-1.5,0) {$w$};
\node (v2) [left] at (-2.5,0) {$v^{\prime\prime}$};
\draw (0,0) -- (v);
\draw (-2,0) -- (w);
\draw (-2,0) -- (v2);
\midarrow (0,0) to [ out=90, in=90] (-2,0);
\midarrow (-2,0) to [ out=-90, in=-90] (0,0);
 \draw  (0,0) circle (.2);
  \draw  (-2,0) circle (.2);
\end{scope}
\end{tikzpicture}
\end{minipage}
$$

For $k=3$ computations give the following result:
$$H_{3,1}(\hairy\assoc_V)=HD_2(I\assoc_V)\iso  \left(V^{\otimes 3}\right)_{\Z_3}\oplus \left(V\otimes \ext^2 V\right)\oplus \left(V\otimes (V^{\otimes 3}\right)_{\Z_3})\oplus \left(\ext^4V\right). $$
We summarize our computations  in the following table, where the results are decomposed by degree and the $(k,d)$ entry gives the partitions $\lambda$ which appear in  $HD_k(I\assoc_V)\degree{d}.$
$$
\begin{array}{c|cccc}
\text{ \tiny k}\backslash^d&1&2&3&4\\
\hline
0&[1]&[1^2]&0&0\\
1&0&2 [1^2]&[2,1]\oplus[1^3]&0\\
2&0&0&[3]\oplus[2,1]\oplus 2[1^3]& [4]\oplus[3,1]\oplus[2,1^2]\oplus2[1^4]
\end{array}
$$

\section{Homology classes for mapping class groups and $\Out(F_n)$}\label{sec:implications}
The assembly map allows us to take hairy graph homology classes of small rank and glue them together to form an ordinary graph cohomology class. To date, this has been succesfully used to build homology classes for $\Aut(F_n),\Out(F_n)$ and $\operatorname{Mod}(g,n)$ by assembling  homology classes from $H_1(\hairy_V).$   Indeed all known rational homology classes for $\Aut(F_n)$ and $\Out(F_n)$ can be assembled from first homology classes represented by hairy graphs of ranks $1$ and $2$.

For $\cO=\assoc$ the first homology of the hairy graph complex is isomorphic to $\ext^3V$ in rank $0,$  to $V\oplus \ext^2V$ in rank $1$ and vanishes in higher rank (\cite{CKV}). The rank $1$  
classes corresponding to $\ext^2 V$ can be glued together in a circle to get ``ornate necklaces;"  
 which represent nontrivial cohomology classes for mapping class groups of punctured tori \cite{Jim}.  It is natural to ask whether higher dimensional hairy graph classes can also be assembled to give new unstable homology for mapping class groups.  There are many other known constructions of unstable classes in the homology of mapping class groups; can some or all of these be assembled from higher hairy graph cohomology classes of smaller rank?   
 
For $\cO=\lie$ gluing rank $1$ and rank $2$ hairy first cohomology classes together gives all known rational homology for $\Out(F_n)$. The above calculations show that there is plenty of higher hairy graph homology in rank $1$, and in \cite{CHKV} we will show that there is also plenty of higher hairy graph homology in rank $2$. Assembling these gives a host of   potential classes in the unstable homology of  $\Out(F_n)$.

\end{document}